\theoremstyle{definition}
\newtheorem{teo}{Theorem}
\begin{document} 

\title{A fractional Traub method with $(2\alpha+1)$th-order of convergence and its stability}
\author[2]{Giro Candelario}
\author[1]{Alicia Cordero}
\author[1]{Juan R. Torregrosa }
\affil[1]{{\small \textit{Instituto de Matemáticas Multidisciplinar, Universitat Polit$\grave{\text{e}}$cnica de Val$\grave{\text{e}}$ncia, Val$\grave{\text{e}}$ncia, Spain}}}
\affil[2]{{\small \textit{Área de Ciencias Básicas y Ambientales, Instituto Tecnológico de Santo Domingo, Santo Domingo, Dominican Republic}}}
\date{}
\maketitle

\begin{abstract}
Some fractional Newton methods have been proposed in order to find roots of nonlinear equations using fractional derivatives. In this paper we introduce a fractional Newton method with order $\alpha+1$ and compare with another fractional Newton method with order $2\alpha$. We also introduce a fractional Traub method with order $2\alpha+1$ and compare with its first step (fractional Newton method with order $\alpha+1$). Some tests and analysis of the dependence on the initial estimations are made for each case. \\

\textbf{Keywords}: \emph{Nonlinear equations, fractional derivatives, Newton's method, Traub's method, convergence, stability}
\end{abstract}

\section{Introduction}

In this section we introduce some conepts related with fractional calculus, and a fractional Newton method recently proposed with Caputo and Riemann-Liouville derivatives. \\
Caputo fractional derivative of a function $f(x)$ of order $\alpha>0$, $a$, $\alpha$, $x \in \mathbb{R}$ is defined as \\
\begin{equation}\label{e1}
cD_a^{\alpha}f(x)=\left\{
\begin{aligned}
\dfrac{1}{\Gamma(m-\alpha)}\int_a^x\dfrac{df^{(m)}(t)}{dt^{(m)}}\dfrac{dt}{(x-t)^{\alpha+1-m}},\hspace{10pt}m-1<\alpha\leq m\in\mathbb{N}, \\
\dfrac{df^{(m)}(t)}{dt^{(m)}},\hspace{100pt}\alpha=m\in\mathbb{N}.
\end{aligned}
\right.
\end{equation}
The Caputo derivative holds the propery of nonfractional derivative, $cD_a^{\alpha}C=0$, being $C$ a constant, as we can see in \cite{IP}. We will be using $m=1$ in this paper. \\
The following theorem provides a Taylor power serie of $f(x)$ with Caputo Derivative.
\begin{teo}[Theorem 3, \cite{ZMO}]
Let us suppose that $cD_a^{j\alpha}f(x)\in C([a, b])$ for $j=1,2,\dots,n+1$ where $\alpha\in(0,1]$, then we have
\begin{equation}\label{e2}
f(x)=\sum_{i=0}^ncD_a^{i\alpha}f(a)\dfrac{(x-a)^{i\alpha}}{\Gamma(i\alpha+1)}+cD^{(n+1)\alpha}f(\xi)\dfrac{(x-a)^{(n+1)\alpha}}{\Gamma((n+1)\alpha+1)},
\end{equation}
with $a\leq\xi\leq x$, for all $x\in(a,b]$ where $cD_a^{n\alpha}=cD_a^\alpha\cdot cD_a^\alpha\cdots cD_a^\alpha$ (n times).
\end{teo}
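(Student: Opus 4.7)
The plan is induction on $n$, combining two standard tools: first, for $0<\alpha\le 1$ the Riemann--Liouville integral $I_a^\alpha g(x):=\frac{1}{\Gamma(\alpha)}\int_a^x (x-t)^{\alpha-1}g(t)\,dt$ inverts the Caputo derivative in the sense
\[
I_a^\alpha\, cD_a^\alpha g(x)=g(x)-g(a)
\]
whenever $cD_a^\alpha g$ is continuous on $[a,b]$; second, the Riemann--Liouville integral satisfies the semigroup property $I_a^{\beta}I_a^\gamma = I_a^{\beta+\gamma}$ for positive orders, together with the elementary formula $I_a^{\beta}[1](x)=(x-a)^{\beta}/\Gamma(\beta+1)$.

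For the base case $n=0$, applying the inverse relation to $f$ itself gives $f(x)=f(a)+I_a^\alpha cD_a^\alpha f(x)$. The first mean value theorem for integrals---valid because $cD_a^\alpha f$ is continuous by hypothesis and the weight $(x-t)^{\alpha-1}$ is nonnegative and integrable on $[a,x]$---then replaces the remainder by $cD_a^\alpha f(\xi)(x-a)^\alpha/\Gamma(\alpha+1)$ for some $\xi\in[a,x]$.

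For the inductive step, assume the formula holds through order $n-1$ with remainder $I_a^{n\alpha} cD_a^{n\alpha} f(x)$. Apply the inverse relation to $cD_a^{n\alpha}f$ (legitimate because $cD_a^{(n+1)\alpha}f=cD_a^{\alpha}(cD_a^{n\alpha}f)$ is continuous by hypothesis) to obtain
\[
cD_a^{n\alpha}f(x)=cD_a^{n\alpha}f(a)+I_a^\alpha cD_a^{(n+1)\alpha}f(x).
\]
Substituting this into the remainder and invoking the semigroup property $I_a^{n\alpha}I_a^\alpha=I_a^{(n+1)\alpha}$ together with $I_a^{n\alpha}[1](x)=(x-a)^{n\alpha}/\Gamma(n\alpha+1)$ produces the next term of the Taylor polynomial and a new remainder $I_a^{(n+1)\alpha}cD_a^{(n+1)\alpha}f(x)$. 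A second application of the first mean value theorem for integrals, using continuity of $cD_a^{(n+1)\alpha}f$ and nonnegativity of $(x-t)^{(n+1)\alpha-1}$, turns this into $cD_a^{(n+1)\alpha}f(\xi)(x-a)^{(n+1)\alpha}/\Gamma((n+1)\alpha+1)$, which is the Lagrange-type remainder claimed.

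The delicate point is the repeated use of the inversion formula $I_a^\alpha cD_a^\alpha g=g-g(a)$: in general, composition of Caputo derivatives of non-integer orders does not behave like ordinary differentiation and can produce spurious boundary terms. Here the hypothesis $cD_a^{j\alpha}f\in C([a,b])$ for every $j=1,\dots,n+1$, together with the convention that $cD_a^{j\alpha}$ denotes the $j$-fold composition of $cD_a^\alpha$, supplies exactly the regularity needed at each inductive stage and ensures that no such extra terms appear.
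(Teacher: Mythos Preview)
The paper does not supply its own proof of this theorem; it merely quotes the result from \cite{ZMO} (Odibat--Shawagfeh). Your argument is correct and is essentially the original proof in that reference: induction on $n$ using the identity $I_a^{\alpha}\,cD_a^{\alpha}g=g-g(a)$, the semigroup law $I_a^{n\alpha}I_a^{\alpha}=I_a^{(n+1)\alpha}$, and the integral mean value theorem to pass from the integral remainder $I_a^{(n+1)\alpha}cD_a^{(n+1)\alpha}f(x)$ to the Lagrange form. The only thing worth tightening is the justification of $I_a^{\alpha}\,cD_a^{\alpha}g=g-g(a)$: continuity of $cD_a^{\alpha}g$ alone is slightly weaker than what one usually states (absolute continuity of $g$), but since by definition $cD_a^{\alpha}g=I_a^{1-\alpha}g'$ is assumed to exist and be continuous, $g'$ is locally integrable and $g$ is absolutely continuous, so the identity holds; you already flag this delicate point in your last paragraph.
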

A fractional Newton method with Caputo derivative has been proposed in \cite{AJR}, as shown in the following iterative expression:
\begin{equation}\label{e3}
x_{k+1}=x_k-\Gamma(\alpha+1)\dfrac{f(x_k)}{cD_{\bar{x}}^{\alpha}f(x_k)},\hspace{10pt}k=0,1,2,\dots
\end{equation}
with $\Gamma(\alpha+1)$ as a damping parameter. Let us denote this method CFN$_1$. \\
Riemann-Liouville fractional derivative of first kind of $f(x)$ with order $\alpha$, $0<\alpha\leq1$, is defined as \\
\begin{equation}\label{e4}
D_{a^+}^{\alpha}f(x)=\left\{
\begin{aligned}
\dfrac{1}{\Gamma(1-\alpha)}\dfrac{d}{dx}\int_a^x\dfrac{f(t)}{(x-t)^{\alpha}}dt,\hspace{10pt}0<\alpha<1, \\
\dfrac{df(t)}{dt},\hspace{100pt}\alpha=1.
\end{aligned}
\right.
\end{equation}
The Riemann-Liouville derivative does not hold the propery of nonfractional derivative, $D_{a^+}^{\alpha}C\neq0$, being $C$ a constant. \\
The following theorem provides a Taylor power serie of $f(x)$ with Riemann-Liouville Derivative (see \cite{JJT}).
\begin{teo}[Proposition 3.1, \cite{GJ}]
Let us assume the continuous funtion $f:\mathbb{R}\longrightarrow\mathbb{R}$ has fractional derivatives of order $k\alpha$, for any positive integer $k$ and any $\alpha$, $0<\alpha\leq1$, then the following equality holds,
\begin{equation}\label{e5}
f(x+h)=\sum_{k=0}^{+\infty}\dfrac{h^{\alpha k}}{\Gamma(\alpha k+1)}D_{a^+}^{\alpha k}f(x),
\end{equation}
where $D_{a^+}^{\alpha k}f(x)$ is the Riemann-Liouville derivative of order $\alpha k$ of $f(x)$.
\end{teo}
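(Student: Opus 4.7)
The plan is to verify the series expansion by an ansatz-and-match strategy. Set $g(h)=f(x+h)$ and postulate that $g$ admits an expansion of the form
\[
g(h)=\sum_{k=0}^{\infty} c_k(x)\,\dfrac{h^{\alpha k}}{\Gamma(\alpha k+1)},
\]
where the coefficients $c_k(x)$ are to be determined. This is the natural fractional analogue of the classical Taylor series, with powers $h^k$ replaced by $h^{\alpha k}$ and factorials $k!$ replaced by $\Gamma(\alpha k+1)$. The goal is to show that the coefficients are forced to be $c_k(x)=D_{a^+}^{\alpha k}f(x)$.

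The key auxiliary fact is the power-rule identity
\[
D_{a^+}^{\alpha}\,h^{\beta}=\dfrac{\Gamma(\beta+1)}{\Gamma(\beta-\alpha+1)}\,h^{\beta-\alpha},\qquad \beta\geq 0,
\]
which follows by direct computation from the definition \eqref{e4} via a Beta-function integral (for the case $a=0$, then extended by translation). Iterating this identity yields $D_{a^+}^{m\alpha}[h^{\alpha k}]=\Gamma(\alpha k+1)/\Gamma(\alpha(k-m)+1)\,h^{\alpha(k-m)}$. I would then apply $D_{h,a^+}^{m\alpha}$ term by term to the ansatz, evaluate at $h=0$, and observe that the only surviving contribution comes from the index $k=m$ (using $1/\Gamma$ vanishing at non-positive integers for $k>m$), leaving $c_m(x)=D_{a^+}^{m\alpha}f(x)$.

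A convergence argument is needed to close the loop: since $f$ is assumed to possess fractional derivatives of every order $k\alpha$, one bounds the tail of the series using a fractional-Taylor remainder in integral form, analogous to the Caputo remainder appearing in \eqref{e2}, and lets the remainder vanish as $n\to\infty$.

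The main obstacle will be twofold. First, justifying the term-by-term fractional differentiation of an infinite series, which is nontrivial because the Riemann--Liouville derivative is a nonlocal operator; a dominated-convergence or uniform-convergence argument on an appropriate neighbourhood of $h=0$ is required. Second, and more delicate, is reconciling the fact noted immediately after \eqref{e4} that $D_{a^+}^{\alpha}C\neq 0$: in the naive matching step, terms with $k<m$ do not drop out by the power rule but instead produce singular contributions $h^{\alpha(k-m)}$ that must be shown to cancel (or be reinterpreted as zero) in the limit $h\to 0$. A clean treatment typically invokes the modified Riemann--Liouville derivative used by Jumarie in the cited reference \cite{GJ}, for which $D^{\alpha}C=0$ holds and the matching is transparent; alternatively, one restricts attention to functions vanishing at the base point so that the singular terms are absent.
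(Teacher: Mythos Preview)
The paper does not supply a proof of this statement at all: it is quoted verbatim as Proposition~3.1 of Jumarie~\cite{GJ} and used as a black box, so there is no in-paper argument to compare your proposal against.

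That said, your sketch is broadly the approach taken in \cite{GJ}, and you have already put your finger on the essential point: the formula as stated only makes sense for Jumarie's \emph{modified} Riemann--Liouville derivative, which by construction annihilates constants. With the unmodified operator of~\eqref{e4} the coefficient-matching step breaks down exactly as you anticipate. One small slip in your write-up: in the line ``using $1/\Gamma$ vanishing at non-positive integers for $k>m$'', the indices are reversed. For $k>m$ the exponent $\alpha(k-m)$ is positive and the term vanishes at $h=0$ with no $\Gamma$-trick needed; it is the range $k<m$ where you would want $1/\Gamma(\alpha(k-m)+1)=0$, and that holds only when $\alpha(k-m)+1$ hits a non-positive integer --- i.e.\ essentially only in the classical case $\alpha=1$. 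For general $\alpha\in(0,1)$ those lower-index terms produce genuinely divergent powers $h^{\alpha(k-m)}$, which is precisely why the modified derivative (or the restriction to functions vanishing at the base point) is not optional but structurally necessary for the argument to go through.
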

Another fractional Newton method was proposed in \cite{AJR} with Riemann-Liouville derivative, as shown in the following iterative expression:
\begin{equation}\label{e6}
x_{k+1}=x_k-\Gamma(\alpha+1)\dfrac{f(x_k)}{D_{a^+}^{\alpha}f(x_k)},\hspace{10pt}k=0,1,2,\dots
\end{equation}
with $\Gamma(\alpha+1)$ as a damping parameter. Let us denote this method R-LFN$_1$. \\
Now we introduce the design of a fractional Newton method with Caputo derivative and without damping parameter, as shown in the next subsection. Let us denote this method CFN$_2$.

\section{Convergence analysis}

\subsection{Newton method with Caputo derivative}

\begin{teo}
Let the continuous function $f:D\subseteq\mathbb{R}\longrightarrow\mathbb{R}$ has fractional derivative with order $k\alpha$, for any positive integer $k$ and any $\alpha\in(0,1]$, in the interval $D$ containing the zero $\bar{x}$ of $f(x)$. Let us suppose $cD_{\bar{x}}^{\alpha}f(x)$ is continuous and not null at $\bar{x}$. If an initial approximation $x_0$ is sufficiently close to $\bar{x}$, then the local convergence order of the fractional Newton method of Caputo type
\begin{equation}\label{e7}
x_{k+1}=x_k-\left(\Gamma(\alpha+1)\dfrac{f(x_k)}{cD_{\bar{x}}^{\alpha}f(x_k)}\right)^{1/\alpha},\hspace{10pt}k=0,1,2,\dots
\end{equation}
is at least $\alpha+1$, being $0<\alpha\leq1$, and the error equation is
$$e_{k+1}=\dfrac{\Gamma(2\alpha+1)-(\Gamma(\alpha+1))^2}{\alpha(\Gamma(\alpha+1))^2}C_2e_k^{\alpha+1}+O\left(e_k^{2\alpha+1}\right).$$
\end{teo}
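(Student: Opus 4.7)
The plan is to expand both $f(x_k)$ and $cD_{\bar{x}}^{\alpha}f(x_k)$ in fractional Taylor series around the root $\bar{x}$, form their quotient, take the $1/\alpha$--power, and match with $e_k=x_k-\bar{x}$. First I would apply Theorem 1 (equation (\ref{e2})) at the point $\bar{x}$, using $f(\bar{x})=0$, to write
$$f(x_k)=\sum_{i\geq 1}\frac{cD_{\bar{x}}^{i\alpha}f(\bar{x})}{\Gamma(i\alpha+1)}\,e_k^{i\alpha}+O\!\left(e_k^{(n+1)\alpha}\right),$$
and then factor out $cD_{\bar{x}}^{\alpha}f(\bar{x})\,e_k^{\alpha}/\Gamma(\alpha+1)$, introducing the normalized constants $C_j=\tfrac{\Gamma(\alpha+1)}{\Gamma(j\alpha+1)}\tfrac{cD_{\bar{x}}^{j\alpha}f(\bar{x})}{cD_{\bar{x}}^{\alpha}f(\bar{x})}$ for $j\geq 2$, so that $f(x_k)=\tfrac{cD_{\bar{x}}^{\alpha}f(\bar{x})}{\Gamma(\alpha+1)}e_k^{\alpha}\bigl[1+C_2e_k^{\alpha}+O(e_k^{2\alpha})\bigr]$.

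Next I would apply $cD_{\bar{x}}^{\alpha}$ termwise to that Taylor series. The key identity I would use is $cD_{\bar{x}}^{\alpha}(x-\bar{x})^{i\alpha}=\tfrac{\Gamma(i\alpha+1)}{\Gamma((i-1)\alpha+1)}(x-\bar{x})^{(i-1)\alpha}$, which follows from the integral in (\ref{e1}) with $m=1$ together with the fact that the Caputo derivative annihilates the constant term. This yields
$$cD_{\bar{x}}^{\alpha}f(x_k)=cD_{\bar{x}}^{\alpha}f(\bar{x})\left[1+\frac{\Gamma(2\alpha+1)}{(\Gamma(\alpha+1))^2}\,C_2\,e_k^{\alpha}+O(e_k^{2\alpha})\right].$$
Dividing the two expansions (and using the geometric series on the denominator) gives
$$\Gamma(\alpha+1)\frac{f(x_k)}{cD_{\bar{x}}^{\alpha}f(x_k)}=e_k^{\alpha}\left[1+\frac{(\Gamma(\alpha+1))^2-\Gamma(2\alpha+1)}{(\Gamma(\alpha+1))^2}\,C_2\,e_k^{\alpha}+O(e_k^{2\alpha})\right].$$

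Finally I would raise this quantity to the power $1/\alpha$ via the binomial expansion $(1+u)^{1/\alpha}=1+u/\alpha+O(u^2)$, factor out $e_k$, and substitute into $e_{k+1}=e_k-\bigl[\Gamma(\alpha+1)f(x_k)/cD_{\bar{x}}^{\alpha}f(x_k)\bigr]^{1/\alpha}$; the leading $e_k$ cancels and the sign flip turns $(\Gamma(\alpha+1))^2-\Gamma(2\alpha+1)$ into $\Gamma(2\alpha+1)-(\Gamma(\alpha+1))^2$, delivering the stated error equation with remainder $O(e_k^{2\alpha+1})$. The main obstacle is bookkeeping: one must verify the termwise application of $cD_{\bar{x}}^{\alpha}$ to the Taylor series (well-justified by the hypothesis that all $cD_{\bar{x}}^{j\alpha}f$ are continuous), and handle the $1/\alpha$ binomial carefully since $\alpha\in(0,1]$ makes $1/\alpha\geq 1$, so the second-order term in that expansion really is of order $e_k^{2\alpha}$ and is correctly absorbed in $O(e_k^{2\alpha+1})$ after multiplication by $e_k$.
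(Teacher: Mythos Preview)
Your proposal is correct and follows essentially the same route as the paper's own proof: fractional Taylor expansions of $f(x_k)$ and $cD_{\bar{x}}^{\alpha}f(x_k)$ about $\bar{x}$ with the same normalized constants $C_j$, formation of the quotient, a $1/\alpha$-power binomial expansion, and cancellation of the leading $e_k$. The only cosmetic difference is that you factor out $e_k^{\alpha}$ and expand $(1+u)^{1/\alpha}$, whereas the paper applies the generalized Newton binomial directly to $(e_k^{\alpha}+\cdots)^{1/\alpha}$ and simplifies the resulting coefficient via $\Gamma(1/\alpha+1)=\tfrac{1}{\alpha}\Gamma(1/\alpha)$; the computations are equivalent.
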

\begin{proof}
The Taylor expansion of $f(x)$ and its Caputo-derivative at $x_k$ around $\bar{x}$ can be expressed by \\
$$f(x_k)=\dfrac{cD_{\bar{x}}^{\alpha}f(\bar{x})}{\Gamma(\alpha+1)}\left[e_k^{\alpha}+C_2e_k^{2\alpha}+C_3e_k^{3\alpha}\right]+O\left(e_k^{4\alpha}\right)$$
and
$$cD_{\bar{x}}^{\alpha}f(x_k)=\dfrac{cD_{\bar{x}}^{\alpha}f(\bar{x})}{\Gamma(\alpha+1)}\left[\Gamma(\alpha+1)+\dfrac{\Gamma(2\alpha+1)}{\Gamma(\alpha+1)}C_2e_k^{\alpha}+\dfrac{\Gamma(3\alpha+1)}{\Gamma(2\alpha+1)}C_3e_k^{2\alpha}\right]+O\left(e_k^{3\alpha}\right)$$
being $C_j=\dfrac{\Gamma(\alpha+1)}{\Gamma(j\alpha+1)}\dfrac{cD_{\bar{x}}^{j\alpha}f(x_k)}{cD_{\bar{x}}^{\alpha}f(x_k)}$ for $j\geq2$. \\
The quotient $\dfrac{f(x_k)}{cD_{\bar{x}}^{\alpha}f(x_k)}$ can be calculated as
$$\dfrac{f(x_k)}{cD_{\bar{x}}^{\alpha}f(x_k)}=\dfrac{1}{\Gamma(\alpha+1)}e_k^{\alpha}+\dfrac{(\Gamma(\alpha+1))^2-\Gamma(2\alpha+1)}{(\Gamma(\alpha+1))^3}C_2e_k^{2\alpha}+O\left(e_k^{3\alpha}\right),$$
that multiplying by $\Gamma(\alpha+1)$ results
$$\Gamma(\alpha+1)\dfrac{f(x_k)}{cD_{\bar{x}}^{\alpha}f(x_k)}=e_k^{\alpha}+\dfrac{(\Gamma(\alpha+1))^2-\Gamma(2\alpha+1)}{(\Gamma(\alpha+1))^2}C_2e_k^{2\alpha}+O\left(e_k^{3\alpha}\right).$$
The expansion of Newton's binomial for fractional powers is given by
$$(x+y)^{r}=\sum_{k=0}^N\binom{r}{k}x^{r-k}y^k,\hspace{10pt}k, N\in\{0\}\cup\mathbb{N},$$
where the generalized binomial coefficient is (see \cite{AS})
$$\binom{r}{k}=\dfrac{\Gamma(r+1)}{k!\Gamma(r-k+1)},\hspace{10pt}k\in\{0\}\cup\mathbb{N}.$$
Thus,
\begin{eqnarray*}
\left(\Gamma(\alpha+1)\dfrac{f(x_k)}{cD_{\bar{x}}^{\alpha}f(x_k)}\right)^{1/\alpha}&=&\left(e_k^{\alpha}+\dfrac{(\Gamma(\alpha+1))^2-\Gamma(2\alpha+1)}{(\Gamma(\alpha+1))^2}C_2e_k^{2\alpha}\right)^{1/\alpha} \\
&=&e_k+\dfrac{\Gamma(\alpha+1)}{1!\Gamma(\alpha)}e_k^{1-\alpha}\dfrac{(\Gamma(\alpha+1))^2-\Gamma(2\alpha+1)}{(\Gamma(\alpha+1))^2}C_2e_k^{2\alpha}+O\left(e_k^{2\alpha+1}\right).
\end{eqnarray*}
As $\Gamma(1/\alpha+1)=\dfrac{1}{\alpha}\Gamma(1/\alpha)$, so simplifying
\begin{equation*}
\left(\Gamma(\alpha+1)\dfrac{f(x_k)}{cD_{\bar{x}}^{\alpha}f(x_k)}\right)^{1/\alpha}=e_k+\dfrac{(\Gamma(\alpha+1))^2-\Gamma(2\alpha+1)}{\alpha(\Gamma(\alpha+1))^2}C_2e_k^{\alpha+1}+O\left(e_k^{2\alpha+1}\right).
\end{equation*}
Let $x_{k+1}=e_{k+1}+\bar{x}$ and $x_k=e_k+\bar{x}$.
$$e_{k+1}+\bar{x}=e_k+\bar{x}-e_k+\dfrac{\Gamma(2\alpha+1)-(\Gamma(\alpha+1))^2}{\alpha(\Gamma(\alpha+1))^2}C_2e_k^{\alpha+1}+O\left(e_k^{2\alpha+1}\right).$$
Therefore
$$e_{k+1}=\dfrac{\Gamma(2\alpha+1)-(\Gamma(\alpha+1))^2}{\alpha(\Gamma(\alpha+1))^2}C_2e_k^{\alpha+1}+O\left(e_k^{2\alpha+1}\right).$$
\end{proof}
In the next subsection we introduce the design of a fractional Newton method with Riemann-Liouville derivative and without damping parameter. Let us denote this method R-LFN$_2$.

\subsection{Newton method with Riemann-Liouville derivative}

\begin{teo}
Let the continuous function $f:D\subseteq\mathbb{R}\longrightarrow\mathbb{R}$ has fractional derivative with order $k\alpha$, for any positive integer $k$ and any $\alpha\in(0,1]$, in the interval $D$ containing the zero $\bar{x}$ of $f(x)$. Let us suppose $D_{a^+}^{\alpha k}f(x)$ is continuous and nonsingular at $\bar{x}$. If an initial approximation $x_0$ is sufficiently close to $\bar{x}$, then the local convergence order of the fractional Newton method of Riemann-Liouville type
\begin{equation}\label{e8}
x_{k+1}=x_k-\left(\Gamma(\alpha+1)\dfrac{f(x_k)}{D_{a^+}^{\alpha k}f(x_k)}\right)^{1/\alpha},\hspace{10pt}k=0,1,2,\dots
\end{equation}
is at least $\alpha+1$, being $0<\alpha\leq1$, and again the error equation is
$$e_{k+1}=\dfrac{\Gamma(2\alpha+1)-(\Gamma(\alpha+1))^2}{\alpha(\Gamma(\alpha+1))^2}C_2e_k^{\alpha+1}+O\left(e_k^{2\alpha+1}\right).$$
\end{teo}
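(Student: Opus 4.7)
The plan is to mirror the proof of the preceding theorem, replacing the Caputo Taylor expansion (\ref{e2}) by its Riemann--Liouville analogue (\ref{e5}). Because (\ref{e5}) has the same formal shape as (\ref{e2}), as a power series in $(x-\bar{x})^{\alpha}$ normalized by $\Gamma(k\alpha+1)$, the whole algebraic machinery of the Caputo proof should carry over verbatim, provided we reinterpret the Taylor coefficients as Riemann--Liouville derivatives of $f$ at $\bar{x}$ and use $f(\bar{x})=0$ to eliminate the constant term of the series.

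First, I would write down the expansions
\begin{equation*}
f(x_k)=\dfrac{D_{a^+}^{\alpha}f(\bar{x})}{\Gamma(\alpha+1)}\left[e_k^{\alpha}+C_2 e_k^{2\alpha}+C_3 e_k^{3\alpha}\right]+O(e_k^{4\alpha}),
\end{equation*}
with $C_j=\frac{\Gamma(\alpha+1)}{\Gamma(j\alpha+1)}\frac{D_{a^+}^{j\alpha}f(\bar{x})}{D_{a^+}^{\alpha}f(\bar{x})}$, and the analogous expansion of $D_{a^+}^{\alpha}f(x_k)$ carrying the same $\Gamma(j\alpha+1)/\Gamma((j-1)\alpha+1)$ coefficients that appeared in the Caputo case. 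Second, I would divide to form the quotient $\Gamma(\alpha+1)\,f(x_k)/D_{a^+}^{\alpha}f(x_k)$, obtaining $e_k^{\alpha}+\frac{(\Gamma(\alpha+1))^2-\Gamma(2\alpha+1)}{(\Gamma(\alpha+1))^2}C_2 e_k^{2\alpha}+O(e_k^{3\alpha})$. Third, I would raise this quantity to the $1/\alpha$-th power via the generalized binomial formula quoted in the previous proof, substitute into (\ref{e8}) with $x_{k+1}=\bar{x}+e_{k+1}$ and $x_k=\bar{x}+e_k$, and watch the leading $e_k$ terms cancel so that the stated error equation drops out.

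The principal obstacle is justifying that applying $D_{a^+}^{\alpha}$ term-by-term to the Riemann--Liouville Taylor series of $f$ about $\bar{x}$ really does produce the precise $\Gamma(j\alpha+1)/\Gamma((j-1)\alpha+1)$ factors on which the algebra depends. Unlike the Caputo derivative, $D_{a^+}^{\alpha}$ does not annihilate constants, so one has to invoke the standard identity for $D_{a^+}^{\alpha}(x-\bar{x})^{j\alpha}$ together with the continuity and nonsingularity hypothesis on $D_{a^+}^{k\alpha}f$ at $\bar{x}$ to legitimize the termwise differentiation. Once this identification is in place, every remaining manipulation is line-by-line the same as in the Caputo proof, so the identical error constant $[\Gamma(2\alpha+1)-(\Gamma(\alpha+1))^2]/[\alpha(\Gamma(\alpha+1))^2]$ must emerge, and the convergence order is again at least $\alpha+1$.
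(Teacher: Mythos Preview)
Your proposal is correct and is precisely the approach the paper takes: the paper gives no separate proof for this theorem, relying instead on the word ``again'' in the statement to signal that the argument of the Caputo case (Theorem~3) carries over verbatim once the Riemann--Liouville Taylor expansion (\ref{e5}) is substituted for (\ref{e2}). Your outline reproduces that Caputo proof step for step, and your caveat about termwise Riemann--Liouville differentiation of the series (which the paper does not address) is a fair technical footnote but not a departure from the intended argument.
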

Now we introduce the design of a fractional Traub method with Caputo derivative and without damping parameter using CFN$_2$ as first step, as shown in the next subsection. Let us denote this method CFT.

\subsection{Traub method with Caputo derivative}

\begin{teo}
Let the continuous function $f:D\subseteq\mathbb{R}\longrightarrow\mathbb{R}$ has fractional derivative with order $k\alpha$, for any positive integer $k$ and any $\alpha\in(0,1]$, in the interval $D$ containing the zero $\bar{x}$ of $f(x)$. Let us suppose $cD_{\bar{x}}^{\alpha}f(x)$ is continuous and not null at $\bar{x}$. If an initial approximation $x_0$ is sufficiently close to $\bar{x}$, then the local convergence order of the fractional Traub method of Caputo type
\begin{equation}\label{e9}
x_{k+1}=y_k-\left(\Gamma(\alpha+1)\dfrac{f(y_k)}{cD_{\bar{x}}^{\alpha}f(x_k)}\right)^{1/\alpha},\hspace{10pt}k=0,1,2,\dots
\end{equation}
being
$$y_k=x_k-\left(\Gamma(\alpha+1)\dfrac{f(x_k)}{cD_{\bar{x}}^{\alpha}f(x_k)}\right)^{1/\alpha},\hspace{10pt}k=0,1,2,\dots$$
is at least $2\alpha+1$, being $0<\alpha\leq1$, and the error equation is
$$e_{k+1}=\left(\dfrac{B}{\alpha A^{1-1/\alpha}C_2^{\alpha-1}}+\dfrac{1}{\alpha}\left(\dfrac{A\Gamma(2\alpha+1)}{(\Gamma(\alpha+1))^2}-B\right)\right)e_k^{2\alpha+1}+O\left(e_k^{3\alpha+1}\right)$$
being
$$A=\left(\dfrac{\Gamma(2\alpha+1)-(\Gamma(\alpha+1))^2}{\alpha(\Gamma(\alpha+1))^2}\right)^{\alpha}C_2^{\alpha}$$
and
\begin{align*}
B&=\alpha\left(\dfrac{\Gamma(2\alpha+1)-(\Gamma(\alpha+1))^2}{\alpha(\Gamma(\alpha+1))^2}\right)^{\alpha-1}C_2^{\alpha-1} \\
&\left(\dfrac{1}{\alpha}\left(\dfrac{\Gamma(3\alpha+1)-\Gamma(2\alpha+1)\Gamma(\alpha+1)}{\Gamma(2\alpha+1)}C_3+\Gamma(2\alpha+1)\dfrac{(\Gamma(\alpha+1))^2-\Gamma(2\alpha+1)}{(\Gamma(\alpha+1))^3}C_2^2\right)\right. \\
&+\left.\dfrac{1}{2\alpha}\left(1-\dfrac{1}{\alpha}\right)\left(\dfrac{((\Gamma(\alpha+1))^2-\Gamma(2\alpha+1))^2}{(\Gamma(\alpha+1))^4}\right)C_2^2\right).
\end{align*}
\end{teo}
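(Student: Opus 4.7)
The plan is to treat the two Traub sub-steps in order, bootstrapping from Theorem 2 for the inner step and then re-running the quotient-and-fractional-binomial machinery for the outer step, with the crucial twist that the input to the outer step is $y_k$, whose error $e_{y,k}:=y_k-\bar{x}$ is already of order $\alpha+1$ in $e_k$. First I would invoke Theorem 2 (CFN$_2$) for the map $x_k\mapsto y_k$, but push its proof one order further: writing the Taylor expansions of $f(x_k)$ and $cD_{\bar{x}}^{\alpha}f(x_k)$ through the $C_3 e_k^{3\alpha}$ terms, inverting the denominator as a geometric series up to order $e_k^{2\alpha}$ (which introduces the $C_2^2$ cross-term coming from the $w^2$ in $1/(1+w)=1-w+w^2-\cdots$), and applying Newton's fractional binomial with both its linear and its quadratic corrections when extracting the $1/\alpha$ power. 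The outcome is an expansion
$$e_{y,k}=(K'C_2)\,e_k^{\alpha+1}+e_{y,k}^{(2)}\,e_k^{2\alpha+1}+O(e_k^{3\alpha+1}),$$
with $K'=(\Gamma(2\alpha+1)-(\Gamma(\alpha+1))^2)/(\alpha(\Gamma(\alpha+1))^2)$ and in which $e_{y,k}^{(2)}$ exhibits exactly the shape $(1/\alpha)(C_3\text{-term}+C_2^2\text{-term})+(1/(2\alpha))(1/\alpha-1)v_1^2$ that appears inside the bracketed factor defining $B$.

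Next I would set up the outer step by applying Theorem 1 at $y_k$, writing
$$f(y_k)=\dfrac{cD_{\bar{x}}^{\alpha}f(\bar{x})}{\Gamma(\alpha+1)}\bigl[e_{y,k}^{\alpha}+C_2 e_{y,k}^{2\alpha}+O(e_{y,k}^{3\alpha})\bigr],$$
and reusing the denominator expansion from Theorem 2. Forming the ratio $\Gamma(\alpha+1)f(y_k)/cD_{\bar{x}}^{\alpha}f(x_k)$ and keeping every contribution whose power of $e_k$ can reach $2\alpha+1$ after the outer $1/\alpha$ power is applied, I would identify two sources of sub-leading behaviour. Source (i) is the numerator piece $e_{y,k}^{\alpha}$ itself, expanded as $(K'C_2)^{\alpha}e_k^{\alpha(\alpha+1)}[1+\alpha(e_{y,k}^{(2)}/(K'C_2))e_k^{\alpha}+\cdots]$, which produces a main term $A\,e_k^{\alpha^2+\alpha}$ together with an $e_k^{\alpha^2+2\alpha}$ correction whose coefficient turns out to be exactly $B$. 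Source (ii) is the cross-product of $e_{y,k}^{\alpha}$ with the $-\Gamma(2\alpha+1)C_2 e_k^{\alpha}/(\Gamma(\alpha+1))^2$ piece of the geometric inverse of the denominator, producing an additional $e_k^{\alpha^2+2\alpha}$ contribution that is ultimately responsible for the $A\Gamma(2\alpha+1)/(\Gamma(\alpha+1))^2$ appearing inside the summand $(1/\alpha)(A\Gamma(2\alpha+1)/(\Gamma(\alpha+1))^2-B)$ of the claimed coefficient.

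I would then raise the quotient to the $1/\alpha$ power using the fractional Newton binomial already employed in the proof of Theorem 2. The leading term $A^{1/\alpha}e_k^{\alpha+1}$ equals $K'C_2\,e_k^{\alpha+1}$, which is the leading part of $e_{y,k}$, so in $e_{k+1}=e_{y,k}-(\cdot)^{1/\alpha}$ the leading $e_k^{\alpha+1}$ contributions cancel exactly. The linear binomial correction $(1/\alpha)(A e_k^{\alpha^2+\alpha})^{1/\alpha-1}R$, with $R$ the combined $e_k^{\alpha^2+2\alpha}$ contribution from sources (i) and (ii), yields the $(1/\alpha)(A\Gamma(2\alpha+1)/(\Gamma(\alpha+1))^2-B)$ summand; and the subleading piece $e_{y,k}^{(2)}e_k^{2\alpha+1}$ of $e_{y,k}$ that survives the cancellation provides the $B/(\alpha A^{1-1/\alpha}C_2^{\alpha-1})$ summand, after factoring out the prefactor $\alpha(K'C_2)^{\alpha-1}=\alpha A^{1-1/\alpha}C_2^{\alpha-1}$ that has been absorbed into the definition of $B$ itself.

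The main obstacle is the bookkeeping of mixed fractional exponents: because $e_{y,k}$ is itself a series in $e_k^{\alpha+1}$ rather than in $e_k$, every occurrence of $e_{y,k}^{j\alpha}$ sits at exponent $j\alpha(\alpha+1)$ in $e_k$, and cross-products $e_k^{i\alpha}e_{y,k}^{j\alpha}$ produce exponents $i\alpha+j\alpha(\alpha+1)$ that must be sorted and truncated against the threshold $2\alpha+1$ uniformly for $\alpha\in(0,1]$. One must verify that the only contributions reaching order $e_k^{2\alpha+1}$ are the three identified above, that the $C_2 e_{y,k}^{2\alpha}$ tail in $f(y_k)$ and the outer quadratic-binomial term both genuinely fall into the stated $O(e_k^{3\alpha+1})$ remainder, and that after simplification the surviving coefficient assembles into the displayed $(A,B)$ form. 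Once this accounting is certified, the claimed error equation follows by direct algebraic identification.
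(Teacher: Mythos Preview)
Your proposal is correct and follows essentially the same approach as the paper: push the inner Newton step one order further via the fractional Taylor expansion, geometric inversion of the denominator, and the linear-plus-quadratic binomial correction to obtain $e_{y,k}$ through $e_k^{2\alpha+1}$; then expand $f(y_k)$ via $(y_k-\bar{x})^{\alpha}=A\,e_k^{\alpha^2+\alpha}+B\,e_k^{\alpha^2+2\alpha}+\cdots$, divide by the same $cD_{\bar{x}}^{\alpha}f(x_k)$ expansion, raise to the $1/\alpha$ power, and subtract from $e_{y,k}$ so that the $e_k^{\alpha+1}$ terms cancel and the $e_k^{2\alpha+1}$ coefficient assembles into the displayed $(A,B)$ form. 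The paper carries out exactly these steps in the same order, with the same identification of the two sub-leading sources you call (i) and (ii), and disposes of the $C_2(y_k-\bar{x})^{2\alpha}$ tail by the order comparison $2\alpha^2+2\alpha\geq 3\alpha$ (valid for $\alpha\geq 1/2$), which is precisely the bookkeeping check you flag as the main obstacle.
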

\begin{proof}
The Taylor expansion of $f(x)$ and its Caputo-derivative at $x_k$ around $\bar{x}$ can be expressed by \\
$$f(x_k)=\dfrac{cD_{\bar{x}}^{\alpha}f(\bar{x})}{\Gamma(\alpha+1)}\left[e_k^{\alpha}+C_2e_k^{2\alpha}+C_3e_k^{3\alpha}+C_4e_k^{4\alpha}\right]+O\left(e_k^{5\alpha}\right)$$
and
\begin{align*}
cD_{\bar{x}}^{\alpha}f(x_k)&=\dfrac{cD_{\bar{x}}^{\alpha}f(\bar{x})}{\Gamma(\alpha+1)}\left[\Gamma(\alpha+1)+\dfrac{\Gamma(2\alpha+1)}{\Gamma(\alpha+1)}C_2e_k^{\alpha}\right. \\
&+\left.\dfrac{\Gamma(3\alpha+1)}{\Gamma(2\alpha+1)}C_3e_k^{2\alpha}+\dfrac{\Gamma(4\alpha+1)}{\Gamma(3\alpha+1)}C_4e_k^{3\alpha}\right]+O\left(e_k^{4\alpha}\right)
\end{align*}
being $C_j=\dfrac{\Gamma(\alpha+1)}{\Gamma(j\alpha+1)}\dfrac{cD_{\bar{x}}^{j\alpha}f(x_k)}{cD_{\bar{x}}^{\alpha}f(x_k)}$ for $j\geq2$. \\
The quotient $\dfrac{f(x_k)}{cD_{\bar{x}}^{\alpha}f(x_k)}$ can be calculated as
\begin{align*}
\dfrac{f(x_k)}{cD_{\bar{x}}^{\alpha}f(x_k)}&=\dfrac{1}{\Gamma(\alpha+1)}e_k^{\alpha}+\dfrac{(\Gamma(\alpha+1))^2-\Gamma(2\alpha+1)}{(\Gamma(\alpha+1))^3}C_2e_k^{2\alpha} \\
&+\left(\left(\dfrac{\Gamma(2\alpha+1)\Gamma(\alpha+1)-\Gamma(3\alpha+1)}{\Gamma(2\alpha+1)\Gamma(\alpha+1)}\right)C_3\right. \\
&\left.-\dfrac{\Gamma(2\alpha+1)}{\Gamma(\alpha+1)}\left(\dfrac{(\Gamma(\alpha+1))^2-\Gamma(2\alpha+1)}{(\Gamma(\alpha+1))^3}\right)C_2^2\right)e_k^{3\alpha}+O\left(e_k^{4\alpha}\right),
\end{align*}
that multiplying by $\Gamma(\alpha+1)$ results
\begin{align*}
\Gamma(\alpha+1)\dfrac{f(x_k)}{cD_{\bar{x}}^{\alpha}f(x_k)}&=e_k^{\alpha}+\dfrac{(\Gamma(\alpha+1))^2-\Gamma(2\alpha+1)}{(\Gamma(\alpha+1))^2}C_2e_k^{2\alpha} \\
&+\left(\left(\dfrac{\Gamma(2\alpha+1)\Gamma(\alpha+1)-\Gamma(3\alpha+1)}{\Gamma(2\alpha+1)}\right)C_3\right. \\
&\left.-\Gamma(2\alpha+1)\left(\dfrac{(\Gamma(\alpha+1))^2-\Gamma(2\alpha+1)}{(\Gamma(\alpha+1))^3}\right)C_2^2\right)e_k^{3\alpha}+O\left(e_k^{4\alpha}\right).
\end{align*}
Raising this expression to the power $1/\alpha$:
\begin{align*}
\left(\Gamma(\alpha+1)\dfrac{f(x_k)}{cD_{\bar{x}}^{\alpha}f(x_k)}\right)^{1/\alpha}&=e_k+\dfrac{1}{\alpha}e_k^{1-\alpha}\left(\left(\dfrac{(\Gamma(\alpha+1))^2-\Gamma(2\alpha+1)}{(\Gamma(\alpha+1))^2}C_2e_k^{2\alpha}\right)\right. \\
&+\left(\dfrac{\Gamma(2\alpha+1)\Gamma(\alpha+1)-\Gamma(3\alpha+1)}{\Gamma(2\alpha+1)}C_3\right. \\
&\left.\left.-\Gamma(2\alpha+1)\dfrac{(\Gamma(\alpha+1))^2-\Gamma(2\alpha+1)}{(\Gamma(\alpha+1))^3}C_2^2\right)e_k^{3\alpha}\right) \\
&+\dfrac{\Gamma(1/\alpha+1)}{2\Gamma(1/\alpha-1)}e_k^{1-2\alpha}\left(\dfrac{(\Gamma(\alpha+1))^2-\Gamma(2\alpha+1)}{(\Gamma(\alpha+1))^2}C_2e_k^{2\alpha}\right)^2 \\
&+O\left(e_k^{3\alpha+1}\right).
\end{align*}
As $\Gamma(1/\alpha+1)=\dfrac{1}{\alpha}\Gamma(1/\alpha)=\dfrac{1}{\alpha}(1/\alpha-1)\Gamma(1/\alpha-1)$, this implies that $\dfrac{\Gamma(1/\alpha+1)}{2\Gamma(1/\alpha-1)}=\dfrac{1}{2\alpha}\left(\dfrac{1}{\alpha}-1\right)$. Simplifying:
\begin{align*}
\left(\Gamma(\alpha+1)\dfrac{f(x_k)}{cD_{\bar{x}}^{\alpha}f(x_k)}\right)^{1/\alpha}&=e_k+\left(\dfrac{(\Gamma(\alpha+1))^2-\Gamma(2\alpha+1)}{\alpha(\Gamma(\alpha+1))^2}\right) C_2e_k^{\alpha+1} \\
&+\left(\dfrac{1}{\alpha}\left(\dfrac{\Gamma(2\alpha+1)\Gamma(\alpha+1)-\Gamma(3\alpha+1)}{\Gamma(2\alpha+1)}C_3\right.\right. \\
&+\left.\Gamma(2\alpha+1)\dfrac{\Gamma(2\alpha+1)-(\Gamma(\alpha+1))^2}{(\Gamma(\alpha+1))^3}C_2^2\right) \\
&+\left.\dfrac{1}{2\alpha}\left(\dfrac{1}{\alpha}-1\right)\left(\dfrac{((\Gamma(\alpha+1))^2-\Gamma(2\alpha+1))^2}{(\Gamma(\alpha+1))^4}\right)C_2^2\right)e_k^{2\alpha+1} \\
&+O\left(e_k^{3\alpha+1}\right).
\end{align*}
Let $e_k=x_k-\bar{x}$, we can say that
\begin{align*}
y_k&=\bar{x}-\left(\dfrac{(\Gamma(\alpha+1))^2-\Gamma(2\alpha+1)}{\alpha(\Gamma(\alpha+1))^2}\right)C_2e_k^{\alpha+1} \\
&-\left(\dfrac{1}{\alpha}\left(\dfrac{\Gamma(2\alpha+1)\Gamma(\alpha+1)-\Gamma(3\alpha+1)}{\Gamma(2\alpha+1)}C_3\right.\right. \\
&+\left.\Gamma(2\alpha+1)\dfrac{\Gamma(2\alpha+1)-(\Gamma(\alpha+1))^2}{(\Gamma(\alpha+1))^3}C_2^2\right) \\
&+\left.\dfrac{1}{2\alpha}\left(\dfrac{1}{\alpha}-1\right)\left(\dfrac{((\Gamma(\alpha+1))^2-\Gamma(2\alpha+1))^2}{(\Gamma(\alpha+1))^4}\right)C_2^2\right)e_k^{2\alpha+1}+O\left(e_k^{3\alpha+1}\right).
\end{align*}
Let us evaluate $f(y_k)$:
$$f(y_k)=\dfrac{cD_{\bar{x}}^{\alpha}f(x_k)}{\Gamma(\alpha+1)}\left[\left(y_k-\bar{x}\right)^{\alpha}+C_2\left(y_k-\bar{x}\right)^{2\alpha}\right]+O\left(e_k^{3\alpha}\right)$$
where
\begin{align*}
\left(y_k-\bar{x}\right)^{\alpha}&=\left(\left(\dfrac{\Gamma(2\alpha+1)-(\Gamma(\alpha+1))^2}{\alpha(\Gamma(\alpha+1))^2}\right)C_2e_k^{\alpha+1}\right. \\
&+\left(\dfrac{1}{\alpha}\left(\dfrac{\Gamma(3\alpha+1)-\Gamma(2\alpha+1)\Gamma(\alpha+1)}{\Gamma(2\alpha+1)}C_3\right.\right. \\
&+\left.\Gamma(2\alpha+1)\dfrac{(\Gamma(\alpha+1))^2-\Gamma(2\alpha+1)}{(\Gamma(\alpha+1))^3}C_2^2\right) \\
&+\left.\left.\dfrac{1}{2\alpha}\left(1-\dfrac{1}{\alpha}\right)\left(\dfrac{((\Gamma(\alpha+1))^2-\Gamma(2\alpha+1))^2}{(\Gamma(\alpha+1))^4}\right)C_2^2\right)e_k^{2\alpha+1}\right)^{\alpha}+O\left(e_k^{\alpha^2+3\alpha}\right) \\
&=\left(\dfrac{\Gamma(2\alpha+1)-(\Gamma(\alpha+1))^2}{\alpha(\Gamma(\alpha+1))^2}\right)^{\alpha}C_2^{\alpha}e_k^{\alpha^2+\alpha} \\
&+\alpha\left(\dfrac{\Gamma(2\alpha+1)-(\Gamma(\alpha+1))^2}{\alpha(\Gamma(\alpha+1))^2}\right)^{\alpha-1}C_2^{\alpha-1}e_k^{\alpha^2-1} \\
&\left(\dfrac{1}{\alpha}\left(\dfrac{\Gamma(3\alpha+1)-\Gamma(2\alpha+1)\Gamma(\alpha+1)}{\Gamma(2\alpha+1)}C_3+\Gamma(2\alpha+1)\dfrac{(\Gamma(\alpha+1))^2-\Gamma(2\alpha+1)}{(\Gamma(\alpha+1))^3}C_2^2\right)\right. \\
&+\left.\dfrac{1}{2\alpha}\left(1-\dfrac{1}{\alpha}\right)\left(\dfrac{((\Gamma(\alpha+1))^2-\Gamma(2\alpha+1))^2}{(\Gamma(\alpha+1))^4}\right)C_2^2\right)e_k^{2\alpha+1}+O\left(e_k^{\alpha^2+3\alpha}\right) \\
&=\left(\dfrac{\Gamma(2\alpha+1)-(\Gamma(\alpha+1))^2}{\alpha(\Gamma(\alpha+1))^2}\right)^{\alpha}C_2^{\alpha}e_k^{\alpha^2+\alpha} \\
&+\alpha\left(\dfrac{\Gamma(2\alpha+1)-(\Gamma(\alpha+1))^2}{\alpha(\Gamma(\alpha+1))^2}\right)^{\alpha-1}C_2^{\alpha-1} \\
&\left(\dfrac{1}{\alpha}\left(\dfrac{\Gamma(3\alpha+1)-\Gamma(2\alpha+1)\Gamma(\alpha+1)}{\Gamma(2\alpha+1)}C_3+\Gamma(2\alpha+1)\dfrac{(\Gamma(\alpha+1))^2-\Gamma(2\alpha+1)}{(\Gamma(\alpha+1))^3}C_2^2\right)\right. \\
&+\left.\dfrac{1}{2\alpha}\left(1-\dfrac{1}{\alpha}\right)\left(\dfrac{((\Gamma(\alpha+1))^2-\Gamma(2\alpha+1))^2}{(\Gamma(\alpha+1))^4}\right)C_2^2\right)e_k^{\alpha^2+2\alpha}+O\left(e_k^{\alpha^2+3\alpha}\right).
\end{align*}
We can see that $\alpha^2+3\alpha\geq3\alpha$ for all $\alpha\in[0,1]$. It is also clear that if we choose the first element of expansion $C_2\left(y_k-\bar{x}\right)^{2\alpha}$ will have order $(\alpha+1)2\alpha=2\alpha^2+2\alpha\geq3\alpha$ for all $\alpha\in[0.5,1]$. So,
\begin{align*}
f(y_k)&=\dfrac{cD_{\bar{x}}^{\alpha}f(x_k)}{\Gamma(\alpha+1)}\left[\left(\dfrac{\Gamma(2\alpha+1)-(\Gamma(\alpha+1))^2}{\alpha(\Gamma(\alpha+1))^2}\right)^{\alpha}C_2^{\alpha}e_k^{\alpha^2+\alpha}\right. \\
&+\alpha\left(\dfrac{\Gamma(2\alpha+1)-(\Gamma(\alpha+1))^2}{\alpha(\Gamma(\alpha+1))^2}\right)^{\alpha-1}C_2^{\alpha-1} \\
&\left(\dfrac{1}{\alpha}\left(\dfrac{\Gamma(3\alpha+1)-\Gamma(2\alpha+1)\Gamma(\alpha+1)}{\Gamma(2\alpha+1)}C_3+\Gamma(2\alpha+1)\dfrac{(\Gamma(\alpha+1))^2-\Gamma(2\alpha+1)}{(\Gamma(\alpha+1))^3}C_2^2\right)\right. \\
&+\left.\left.\dfrac{1}{2\alpha}\left(1-\dfrac{1}{\alpha}\right)\left(\dfrac{((\Gamma(\alpha+1))^2-\Gamma(2\alpha+1))^2}{(\Gamma(\alpha+1))^4}\right)C_2^2\right)e_k^{\alpha^2+2\alpha}\right]+O\left(e_k^{\alpha^2+3\alpha}\right).
\end{align*}
Let us call
$$A=\left(\dfrac{\Gamma(2\alpha+1)-(\Gamma(\alpha+1))^2}{\alpha(\Gamma(\alpha+1))^2}\right)^{\alpha}C_2^{\alpha}$$
and
\begin{align*}
B&=\alpha\left(\dfrac{\Gamma(2\alpha+1)-(\Gamma(\alpha+1))^2}{\alpha(\Gamma(\alpha+1))^2}\right)^{\alpha-1}C_2^{\alpha-1} \\
&\left(\dfrac{1}{\alpha}\left(\dfrac{\Gamma(3\alpha+1)-\Gamma(2\alpha+1)\Gamma(\alpha+1)}{\Gamma(2\alpha+1)}C_3+\Gamma(2\alpha+1)\dfrac{(\Gamma(\alpha+1))^2-\Gamma(2\alpha+1)}{(\Gamma(\alpha+1))^3}C_2^2\right)\right. \\
&+\left.\dfrac{1}{2\alpha}\left(1-\dfrac{1}{\alpha}\right)\left(\dfrac{((\Gamma(\alpha+1))^2-\Gamma(2\alpha+1))^2}{(\Gamma(\alpha+1))^4}\right)C_2^2\right),
\end{align*}
then
$$f(y_k)=\dfrac{cD_{\bar{x}}^{\alpha}f(x_k)}{\Gamma(\alpha+1)}\left[Ae_k^{\alpha^2+\alpha}+Be_k^{\alpha^2+2\alpha}\right]+O\left(e_k^{\alpha^2+3\alpha}\right).$$
The quotient $\dfrac{f(y_k)}{cD_{\bar{x}}^{\alpha}f(x_k)}$ results
$$\dfrac{f(y_k)}{cD_{\bar{x}}^{\alpha}f(x_k)}=\dfrac{Ae_k^{\alpha^2+\alpha}}{\Gamma(\alpha+1)}+\dfrac{1}{\Gamma(\alpha+1)}\left(B-\dfrac{A\Gamma(2\alpha+1)}{(\Gamma(\alpha+1))^2}\right)e_k^{\alpha^2+2\alpha}+O\left(e_k^{\alpha^2+3\alpha}\right),$$
that multiplying by $\Gamma(\alpha+1)$ results
$$\Gamma(\alpha+1)\dfrac{f(y_k)}{cD_{\bar{x}}^{\alpha}f(x_k)}=Ae_k^{\alpha^2+\alpha}+\left(B-\dfrac{A\Gamma(2\alpha+1)}{(\Gamma(\alpha+1))^2}\right)e_k^{\alpha^2+2\alpha}+O\left(e_k^{\alpha^2+3\alpha}\right).$$
Raising this expression to the power $1/\alpha$:
\begin{align*}
\left(\Gamma(\alpha+1)\dfrac{f(y_k)}{cD_{\bar{x}}^{\alpha}f(x_k)}\right)^{1/\alpha}&=A^{1/\alpha}e_k^{\alpha+1}+\dfrac{1}{\alpha}e_k^{-\alpha^2+1}\left(B-\dfrac{A\Gamma(2\alpha+1)}{(\Gamma(\alpha+1))^2}\right)e_k^{\alpha^2+2\alpha}+O\left(e_k^{3\alpha+1}\right) \\
&=A^{1/\alpha}e_k^{\alpha+1}+\dfrac{1}{\alpha}\left(B-\dfrac{A\Gamma(2\alpha+1)}{(\Gamma(\alpha+1))^2}\right)e_k^{2\alpha+1}+O\left(e_k^{3\alpha+1}\right).
\end{align*}
Let $x_{k+1}=e_{k+1}+\bar{x}$ and $x_k=e_k+\bar{x}$.
\begin{align*}
e_{k+1}+\bar{x}&=e_k+\bar{x}+\left[-e_k+A^{1/\alpha}e_k^{\alpha+1}+\left(\dfrac{B}{\alpha A^{1-1/\alpha}C_2^{\alpha-1}}\right)e_k^{2\alpha+1}\right] \\
&-\left[A^{1/\alpha}e_k^{\alpha+1}+\dfrac{1}{\alpha}\left(B-\dfrac{A\Gamma(2\alpha+1)}{(\Gamma(\alpha+1))^2}\right)e_k^{2\alpha+1}\right]+O\left(e_k^{3\alpha+1}\right).
\end{align*}
Therefore
$$e_{k+1}=\left(\dfrac{B}{\alpha A^{1-1/\alpha}C_2^{\alpha-1}}+\dfrac{1}{\alpha}\left(\dfrac{A\Gamma(2\alpha+1)}{(\Gamma(\alpha+1))^2}-B\right)\right)e_k^{2\alpha+1}+O\left(e_k^{3\alpha+1}\right).$$
\end{proof}
In the next subsection we introduce the design of a fractional Traub method with Riemann-Liouville derivative and without damping parameter using R-LFN$_2$ as first step. Let us denote this method R-LFT.

\subsection{Traub method with Riemann-Liouville derivative}

\begin{teo}
Let the continuous function $f:D\subseteq\mathbb{R}\longrightarrow\mathbb{R}$ has fractional derivative with order $k\alpha$, for any positive integer $k$ and any $\alpha\in(0,1]$, in the interval $D$ containing the zero $\bar{x}$ of $f(x)$. Let us suppose $D_{a^+}^{\alpha k}f(x)$ is continuous and nonsingular at $\bar{x}$. If an initial approximation $x_0$ is sufficiently close to $\bar{x}$, then the local convergence order of the fractional Traub method of Riemann-Liouville type
\begin{equation}\label{e10}
x_{k+1}=y_k-\left(\Gamma(\alpha+1)\dfrac{f(y_k)}{D_{a^+}^{\alpha k}f(x_k)}\right)^{1/\alpha},\hspace{10pt}k=0,1,2,\dots
\end{equation}
being
$$y_k=x_k-\left(\Gamma(\alpha+1)\dfrac{f(x_k)}{D_{a^+}^{\alpha k}f(x_k)}\right)^{1/\alpha},\hspace{10pt}k=0,1,2,\dots$$
is at least $2\alpha+1$, being $0<\alpha\leq1$, and again the error equation is
$$e_{k+1}=\left(\dfrac{B}{\alpha A^{1-1/\alpha}C_2^{\alpha-1}}+\dfrac{1}{\alpha}\left(\dfrac{A\Gamma(2\alpha+1)}{(\Gamma(\alpha+1))^2}-B\right)\right)e_k^{2\alpha+1}+O\left(e_k^{3\alpha+1}\right)$$
being
$$A=\left(\dfrac{\Gamma(2\alpha+1)-(\Gamma(\alpha+1))^2}{\alpha(\Gamma(\alpha+1))^2}\right)^{\alpha}C_2^{\alpha}$$
and
\begin{align*}
B&=\alpha\left(\dfrac{\Gamma(2\alpha+1)-(\Gamma(\alpha+1))^2}{\alpha(\Gamma(\alpha+1))^2}\right)^{\alpha-1}C_2^{\alpha-1} \\
&\left(\dfrac{1}{\alpha}\left(\dfrac{\Gamma(3\alpha+1)-\Gamma(2\alpha+1)\Gamma(\alpha+1)}{\Gamma(2\alpha+1)}C_3+\Gamma(2\alpha+1)\dfrac{(\Gamma(\alpha+1))^2-\Gamma(2\alpha+1)}{(\Gamma(\alpha+1))^3}C_2^2\right)\right. \\
&+\left.\dfrac{1}{2\alpha}\left(1-\dfrac{1}{\alpha}\right)\left(\dfrac{((\Gamma(\alpha+1))^2-\Gamma(2\alpha+1))^2}{(\Gamma(\alpha+1))^4}\right)C_2^2\right).
\end{align*}
\end{teo}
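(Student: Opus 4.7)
The plan is to mirror the proof of the Caputo Traub theorem of the previous subsection, simply replacing the Caputo Taylor expansion (\ref{e2}) with the Riemann-Liouville Taylor expansion (\ref{e5}). Because both expansions share the same formal shape $f(x)=\sum_{k\geq 0}\frac{(x-\bar x)^{k\alpha}}{\Gamma(k\alpha+1)}D^{k\alpha}f(\bar x)$, all the algebraic manipulations carry over essentially verbatim, with $cD_{\bar x}^{\alpha}$ replaced by the Riemann-Liouville symbol and the coefficients defined analogously as $C_j=\frac{\Gamma(\alpha+1)}{\Gamma(j\alpha+1)}\frac{D_{a^+}^{j\alpha}f(x_k)}{D_{a^+}^{\alpha}f(x_k)}$ for $j\geq 2$.

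First I would write the Taylor expansions of $f(x_k)$ up to order $e_k^{4\alpha}$ and of $D_{a^+}^{\alpha}f(x_k)$ up to order $e_k^{3\alpha}$, then form the quotient, multiply by $\Gamma(\alpha+1)$ as a damping factor, and raise to the power $1/\alpha$ via the generalized binomial expansion already invoked in the Caputo Traub proof. The output is the expansion of $y_k-\bar x$ with the same leading constant as in the R-LFN$_2$ theorem and the same second-order coefficient as in the Caputo Traub proof. Next I would expand $f(y_k)$ using (\ref{e5}) around $\bar x$, discarding terms of order $\geq\alpha^2+3\alpha$ (noting, exactly as in the Caputo case, that $\alpha^2+3\alpha\geq 3\alpha$ and $2\alpha^2+2\alpha\geq 3\alpha$ for $\alpha\in[1/2,1]$, which is what justifies the truncation). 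This yields $f(y_k)=\frac{D_{a^+}^{\alpha}f(\bar x)}{\Gamma(\alpha+1)}\bigl[A\,e_k^{\alpha^2+\alpha}+B\,e_k^{\alpha^2+2\alpha}\bigr]+O(e_k^{\alpha^2+3\alpha})$ with exactly the $A$ and $B$ in the statement. Forming $\Gamma(\alpha+1)f(y_k)/D_{a^+}^{\alpha}f(x_k)$, raising to $1/\alpha$, and subtracting from $y_k$ yields the claimed error equation after the $A^{1/\alpha}e_k^{\alpha+1}$ contribution from $y_k-\bar x$ cancels the corresponding term from the second step.

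The principal obstacle is conceptual rather than computational. In the Caputo proof, the step that expands $(y_k-\bar x)^{\alpha}$ by the fractional binomial theorem implicitly uses the fact that Caputo derivatives annihilate constants, so translation of the expansion point is harmless. For Riemann-Liouville derivatives this translation invariance fails, but (\ref{e5}) is already stated directly in terms of $D_{a^+}^{k\alpha}f$ evaluated at the chosen expansion point, so no translation is actually performed; one only manipulates the formal series. Once this is accepted, the chain of expansions, quotients, and binomial rewrites is entirely mechanical, and the $A$, $B$ appearing in the final constant have exactly the expressions displayed in the statement, giving local order of convergence at least $2\alpha+1$.
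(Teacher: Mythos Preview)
Your proposal is correct and matches the paper's own treatment: the paper states this theorem without proof, the word ``again'' in the statement signalling that the argument is identical to that of the Caputo Traub theorem (Theorem~5) with the Riemann--Liouville Taylor expansion~(\ref{e5}) replacing~(\ref{e2}) and the $C_j$ redefined accordingly. Your outline reproduces exactly that substitution and the same chain of expansions, quotients, and binomial rewrites, so there is nothing to add.
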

In the next section we are going to test functions, and analyze the dependence on the initial estimation of the Newton and Traub methods shown before.

\section{Numerical stability}

In this section we will be using Matlab R2018b with double precission arithmetics, $|x_{k+1}-x_k|<10^{-8}$ or $|f(x_{k+1})|<10^{-8}$ as stopping criterium, and a maximum of 500 iterations. For calculation of Gamma function we use the program made in \cite{CL}. For Mittag-Leffler function we use the program provided by Igor Podlubny in Mathworks.

\subsection{Numerical results}

In this subsection we are going to test 4 functions in order to make a comparisson between the methods desgned before by using different initial estimations. \\
Our first function is $f_1(x)=-12.84x^6-25.6x^5+16.55x^4-2.21x^3+26.71x^2-4.29x-15.21$ with roots $\bar{x}_1=0.82366+0.24769i$, $\bar{x}_2=0.82366-0.24769i$, $\bar{x}_3=-2.62297$, $\bar{x}_4=-0.584$, $\bar{x}_5=-0.21705+0.99911i$ and $\bar{x}_6=-0.21705-0.99911i$. \\
In table \ref{t1} we can see that CFN$_1$ requires less iterations than CFN$_2$ for a real value of $x_0$, while in tables \ref{t2} and \ref{t3} CFN$_2$ requires less iterations than CFN$_1$ for a large value of imaginary part.
\begin{center}
\begin{tabular}{|c|c|c|c|c|c|c|c|c|}
\hline
 & \multicolumn{4}{c|}{CFN$_1$ method} & \multicolumn{4}{c|}{CFN$_2$ method} \\ \hline
$\alpha$ & $\bar{x}$ & $|x_{k+1}-x_k|$ & $|f(x_{k+1})|$ & iter & $\bar{x}$ & $|x_{k+1}-x_k|$ & $|f(x_{k+1})|$ & iter \\ \hline
0.6 & -0.85348-0.1491i & 0.29821 & 28.343 & 500 & -0.58406 & 1.7603e-07 & 0.0035619 & 500 \\
0.65 & -0.71052-0.087443i & 0.17488 & 11.329 & 500 & -0.58401 & 4.1154e-08 & 6.7515e-04 & 500 \\
0.7 & -0.62035-0.029249i & 0.058499 & 2.98929 & 500 & -0.584 & 9.9926e-09 & 1.1322e-04 & 432 \\
0.75 & -0.584+4.9256e-09i & 9.6537e-09 & 4.1645e-07 & 151 & -0.584 & 9.8524e-09 & 4.6756e-05 & 230 \\
0.8 & -0.584-2.882e-09i & 8.5475e-09 & 3.0465e-07 & 50 & -0.584 & 9.6579e-09 & 1.8943e-05 & 124 \\
0.85 & -0.584-2.5108e-09i & 9.468e-09 & 2.606e-07 & 28 & -0.584 & 9.9396e-09 & 7.7541e-06 & 67 \\
0.9 & -0.584+1.0144e-09i & 3.9203e-09 & 7.3851e-08 & 19 & -0.584 & 9.109e-09 & 2.6706e-06 & 37 \\
0.95 & -0.584+3.1405e-10i & 2.5822e-09 & 2.4894e-08 & 13 & -0.584 & 7.3622e-09 & 6.4461e-07 & 20 \\
1 & -0.584 & 3.0876e-06 & 8.8694e-10 & 6 & -0.584 & 3.0876e-06 & 8.8694e-10 & 6 \\ \hline
\end{tabular}
\captionof{table}{Fractional Newton$_1$ and Newton$_2$ results for $f_1(x)$ with Caputo derivative and initial estimation $x_0=-1.5$}\label{t1}
\end{center}
\begin{center}
\begin{tabular}{|c|c|c|c|c|c|c|c|c|}
\hline
 & \multicolumn{4}{c|}{CFN$_1$ method} & \multicolumn{4}{c|}{CFN$_2$ method} \\ \hline
$\alpha$ & $\bar{x}$ & $|x_{k+1}-x_k|$ & $|f(x_{k+1})|$ & iter & $\bar{x}$ & $|x_{k+1}-x_k|$ & $|f(x_{k+1})|$ & iter \\ \hline
0.6 & -7.22e+02-1.05e+01i & 15.573 & 1.8225e+18 & 500 & -0.21705+0.99914i & 1.2135e-07 & 0.0052781 & 500 \\
0.65 & -2.7481-0.1014i & 0.2028 & 4.26e+02 & 500 & -0.21705+0.99911i & 2.9641e-08 & 0.0010503 & 500 \\
0.7 & -2.6804-0.05406i & 0.10812 & 1.8541e+02 & 500 & -0.21705+0.99911i & 9.9222e-09 & 2.2511e-04 & 410 \\
0.75 & -2.629+0.0062659i &  0.012531 & 18.709 & 500 & -0.21705+0.99911i & 9.9267e-09 & 9.7312e-05 & 249 \\
0.8 & -2.6229+3.6288e-09i &  8.1391e-09 & 1.0204e-05 & 203 & -0.21705+0.99911i & 9.9588e-09 & 4.1526e-05 & 161 \\
0.85 & -2.6229-2.8915e-09i & 8.548e-09 & 8.3609e-06 & 128 & -0.21705+0.99911i & 9.214e-09 & 1.6051e-05 & 113 \\
0.9 &  -2.6229-1.2931e-09i & 5.0518e-09 & 3.4133e-06 & 92 & -0.21705+0.99911i & 8.5555e-09 & 5.7295e-06 & 85 \\
0.95 & -0.584 &  1.7332e-09 & 1.6709e-08 & 73 & -0.21705+0.99911i & 7.8139e-09 & 1.5795e-06 & 68 \\
1 & -0.21705+0.99911i & 2.8445e-07 & 2.9798e-11 & 54 & -0.21705+0.99911i & 2.8445e-07 & 2.9798e-11 & 54 \\ \hline
\end{tabular}
\captionof{table}{Fractional Newton$_1$ and Newton$_2$ results for $f_1(x)$ with Caputo derivative and initial estimation $x_0=-1.5+1e04i$}\label{t2}
\end{center}
\begin{center}
\begin{tabular}{|c|c|c|c|c|c|c|c|c|}
\hline
 & \multicolumn{4}{c|}{CFN$_1$ method} & \multicolumn{4}{c|}{CFN$_2$ method} \\ \hline
$\alpha$ & $\bar{x}$ & $|x_{k+1}-x_k|$ & $|f(x_{k+1})|$ & iter & $\bar{x}$ & $|x_{k+1}-x_k|$ & $|f(x_{k+1})|$ & iter \\ \hline
0.6 & -7.22e+02-1.03e+01i & 15.571 & 1.8197e+18 & 500 & -0.21705+0.99914i & 1.2135e-07 & 0.0052781 & 500 \\
0.65 & -2.7481-0.1014i & 0.2028 & 4.26e+02 & 500 & -0.21705+0.99911i & 2.9641e-08 & 0.0010503 & 500 \\
0.7 & -2.6804-0.054i & 0.10812 & 1.8541e+02 & 500 & -0.21705+0.99911i & 9.9222e-09 & 2.2511e-04 & 410 \\
0.75 & -2.629+0.0062659i & 0.012531 & 18.709 & 500 & -0.21705+0.99911i & 9.9267e-09 & 9.7312e-05 & 249 \\
0.8 & -2.6229+3.631e-09i & 8.1356e-09 & 1.02e-05 & 203 & -0.21705+0.99911i & 9.9588e-09 & 4.1526e-05 & 161 \\
0.85 & -2.6229-2.8848e-09i & 8.5408e-09 & 8.3538e-06 & 128 & -0.21705+0.99911i & 9.214e-09 & 1.6051e-05 & 113 \\
0.9 & -2.6229-1.2902e-09i & 5.0472e-09 & 3.4103e-06 & 92 & -0.21705+0.99911i & 8.5555e-09 & 5.7295e-06 & 85 \\
0.95 & -0.584+1.5616e-10i & 1.7329e-09 & 1.6707e-08 & 73 & -0.21705+0.99911i & 7.8139e-09 & 1.5795e-06 & 68 \\
1 & -0.21705+0.99911i & 2.8474e-07 & 2.986e-11 & 54 & -0.21705+0.99911i & 2.8474e-07 & 2.986e-11 & 54 \\ \hline
\end{tabular}
\captionof{table}{Fractional Newton$_1$ and Newton$_2$ results for $f_1(x)$ with Caputo derivative and initial estimation $x_0=1e04i$}\label{t3}
\end{center}
In the case of R-LFN$_1$ and R-LFN$_1$ can be observed the same behavior as in Caputo case. In table \ref{t4}, R-LFN$_1$ requires less iterations than R-LFN$_2$ for a real value of $x_0$, while in tables \ref{t5} and \ref{t6} R-LFN$_2$ requires less iterations than R-LFN$_1$ for a large value of imaginary part.
\begin{center}
\begin{tabular}{|c|c|c|c|c|c|c|c|c|}
\hline
 & \multicolumn{4}{c|}{R-LFN$_1$ method} & \multicolumn{4}{c|}{R-LFN$_2$ method} \\ \hline
$\alpha$ & $\bar{x}$ & $|x_{k+1}-x_k|$ & $|f(x_{k+1})|$ & iter & $\bar{x}$ & $|x_{k+1}-x_k|$ & $|f(x_{k+1})|$ & iter \\ \hline
0.6 & -0.82233-0.15603i & 0.31207 & 24.999 & 500 & -0.58402 & 8.2311e-08 & 0.001664 & 500 \\
0.65 & -0.71232-0.10603i & 0.21206 & 12.298 & 500 & -0.584 & 2.0291e-08 & 3.3261e-04 & 500 \\
0.7 & -0.64534-0.058604i & 0.11720 & 5.6486 & 500 & -0.584 & 9.9746e-09 & 9.2638e-05 & 354 \\
0.75 & -0.59261-0.008992i & 0.017984 & 0.76427 & 500 & -0.584 & 9.9858e-09 & 4.0402e-05 & 196 \\
0.8 & -0.584-3.9918e-09i & 9.8676e-09 & 3.5368e-07 & 72 & -0.584 & 9.6983e-09 & 1.6881e-05 & 110 \\
0.85 & -0.584-2.8194e-09i & 8.8611e-09 & 2.4785e-07 & 32 & -0.584 & 9.4322e-09 & 6.8049e-06 & 62 \\
0.9 & -0.584-9.6669e-10i & 4.1273e-09 & 7.9749e-08 & 20 & -0.584 & 8.923e-09 & 2.4689e-06 & 35 \\
0.95 & -0.584+4.9765e-10i & 3.9835e-09 & 3.9713e-08 & 13 & -0.584 & 9.0322e-09 & 7.4692e-07 & 19 \\
1 & -0.584 & 3.0876e-06 & 8.8694e-10 & 6 & -0.584 & 3.0876e-06 & 8.8694e-10 & 6 \\ \hline
\end{tabular}
\captionof{table}{Fractional Newton$_1$ and Newton$_2$ results for $f_1(x)$ with Riemann-Liouville derivative and initial estimation $x_0=-1.5$}\label{t4}
\end{center}
\begin{center}
\begin{tabular}{|c|c|c|c|c|c|c|c|c|}
\hline
 & \multicolumn{4}{c|}{R-LFN$_1$ method} & \multicolumn{4}{c|}{R-LFN$_2$ method} \\ \hline
$\alpha$ & $\bar{x}$ & $|x_{k+1}-x_k|$ & $|f(x_{k+1})|$ & iter & $\bar{x}$ & $|x_{k+1}-x_k|$ & $|f(x_{k+1})|$ & iter \\ \hline
0.6 & -7.22e+02-1.05e+01i & 15.573 & 1.8225e+18 & 500 & -0.21706+0.99913i & 9.3943e-08 & 0.0040855 & 500 \\
0.65 & -2.748-0.10116i & 0.20233 & 4.2559e+02 & 500 & -0.21705+0.99911i & 2.3654e-08 & 8.3809e-04 & 500 \\
0.7 & -2.6802-0.053771i & 0.10754 & 1.8454e+02 & 500 & -0.21705+0.99911i & 9.9497e-09 & 2.1234e-04 & 389 \\
0.75 & -2.6287+0.0059679i & 0.011935 & 17.824 & 500 & -0.21705+0.99911i & 9.8157e-09 & 9.2268e-05 & 241 \\
0.8 & -2.6229-4.1357e-09i & 9.5805e-09 & 1.201e-05 & 202 & -0.21705+0.99911i & 9.7617e-09 & 3.9571e-05 & 158 \\
0.85 & -2.6229-2.824e-09i & 8.3565e-09 & 8.1719e-06 & 128 & -0.21705+0.99911i & 9.8212e-09 & 1.6555e-05 & 111 \\
0.9 & -2.6229-1.2802e-09i & 5.0059e-09 & 3.3815e-06 & 92 & -0.21705+0.99911i & 9.607e-09 & 6.2489e-06 & 84 \\
0.95 & -0.584+2.5552e-10i & 2.6575e-09 & 2.6494e-08 & 73 & -0.21705+0.99911i & 7.1581e-09 & 1.4445e-06 & 68 \\
1 & -0.21705+0.99911i & 2.8445e-07 & 2.9798e-11 & 54 & -0.21705+0.99911i & 2.8445e-07 & 2.9798e-11 & 54 \\ \hline
\end{tabular}
\captionof{table}{Fractional Newton$_1$ and Newton$_2$ results for $f_1(x)$ with Riemann-Liouville derivative and initial estimation $x_0=-1.5+1e04i$}\label{t5}
\end{center}
\begin{center}
\begin{tabular}{|c|c|c|c|c|c|c|c|c|}
\hline
 & \multicolumn{4}{c|}{R-LFN$_1$ method} & \multicolumn{4}{c|}{R-LFN$_2$ method} \\ \hline
$\alpha$ & $\bar{x}$ & $|x_{k+1}-x_k|$ & $|f(x_{k+1})|$ & iter & $\bar{x}$ & $|x_{k+1}-x_k|$ & $|f(x_{k+1})|$ & iter \\ \hline
0.6 & -7.22e+02-1.03e+01i & 15.571 & 1.8197e+18 & 500 & -0.21706+0.99913i & 9.3943e-08 & 0.0040855 & 500 \\
0.65 & -2.748-0.10116i & 0.20233 & 4.2559e+02 & 500 & -0.21705+0.99911i & 2.3654e-08 & 8.3809e-04 & 500 \\
0.7 & -2.6802-0.053771i & 0.10754 & 1.8454e+02 & 500 & -0.21705+0.99911i & 9.9497e-09 & 2.1234e-04 & 389 \\
0.75 & -2.6287+0.0059679i & 0.011935 & 17.824 & 500 & -0.21705+0.99911i & 9.8157e-09 & 9.2268e-05 & 241 \\
0.8 & -2.6229-4.1292e-09i & 9.5763e-09 & 1.2005e-05 & 202 & -0.21705+0.99911i & 9.7617e-09 & 3.9571e-05 & 158 \\
0.85 & -2.6229-2.8175e-09i & 8.3495e-09 & 8.1651e-06 & 128 & -0.21705+0.99911i & 9.8212e-09 & 1.6555e-05 & 111 \\
0.9 & -2.6229-1.2773e-09i & 5.0014e-09 & 3.3785e-06 & 92 & -0.21705+0.99911i & 9.607e-09 & 6.2489e-06 & 84 \\
0.95 & -0.584+2.5644e-10i & 2.6577e-09 & 2.6496e-08 & 73 & -0.21705+0.99911i & 7.1581e-09 & 1.4444e-06 & 68 \\
1 & -0.21705+0.99911i & 2.8474e-07 & 2.986e-11 & 54 & -0.21705+0.99911i & 2.8474e-07 & 2.986e-11 & 54 \\ \hline
\end{tabular}
\captionof{table}{Fractional Newton$_1$ and Newton$_2$ results for $f_1(x)$ with Riemann-Liouville derivative and initial estimation $x_0=1e04i$}\label{t6}
\end{center}
Now, let us compare the Traub methods CFT and R-LFT with their first steps CFN$_2$ and R-LFN$_2$ respectively for $f_1(x)$. In both tables \ref{t7} and \ref{t8} we can see that Taub method requires less iterations than its first step.
\begin{center}
\begin{tabular}{|c|c|c|c|c|c|c|c|c|}
\hline
 & \multicolumn{4}{c|}{CFN$_2$ method} & \multicolumn{4}{c|}{CFT method} \\ \hline
$\alpha$ & $\bar{x}$ & $|x_{k+1}-x_k|$ & $|f(x_{k+1})|$ & iter & $\bar{x}$ & $|x_{k+1}-x_k|$ & $|f(x_{k+1})|$ & iter \\ \hline
0.6 & -0.58406 & 1.7603e-07 & 0.0035619 & 500 & -0.58402 & 6.2898e-08 & 0.0012681 & 500 \\
0.65 & -0.58401 & 4.1154e-08 & 6.7515e-04 & 500 & -0.584 & 1.1562e-08 & 1.8867e-04 & 500 \\
0.7 & -0.584 & 9.9926e-09 & 1.1322e-04 & 432 & -0.584 & 9.9588e-09 & 6.9453e-05 & 268 \\
0.75 & -0.584 & 9.8524e-09 & 4.6756e-05 & 230 & -0.584 & 9.9889e-09 & 2.7995e-05 & 138 \\
0.8 & -0.584 & 9.6579e-09 & 1.8943e-05 & 124 & -0.584 & 9.5606e-09 & 1.0693e-05 & 73 \\
0.85 & -0.584 & 9.9396e-09 & 7.7541e-06 & 67 & -0.584 & 9.4657e-09 & 4.0225e-06 & 39 \\
0.9 & -0.584 & 9.109e-09 & 2.6706e-06 & 37 & -0.584 & 6.8084e-09 & 1.0286e-06 & 22 \\
0.95 & -0.584 & 7.3622e-09 & 6.4461e-07 & 20 & -0.584 & 5.2078e-09 & 1.8928e-07 & 12 \\
1 & -0.584 & 3.0876e-06 & 8.8694e-10 & 6 & -0.584 & 2.2023e-10 & 5.329e-15 & 5 \\ \hline
\end{tabular}
\captionof{table}{Fractional Newton$_2$ and Traub results for $f_1(x)$ with Caputo derivative and initial estimation $x_0=-1.5$}\label{t7}
\end{center}
\begin{center}
\begin{tabular}{|c|c|c|c|c|c|c|c|c|}
\hline
 & \multicolumn{4}{c|}{R-LFN$_2$ method} & \multicolumn{4}{c|}{R-LFT method} \\ \hline
$\alpha$ & $\bar{x}$ & $|x_{k+1}-x_k|$ & $|f(x_{k+1})|$ & iter & $\bar{x}$ & $|x_{k+1}-x_k|$ & $|f(x_{k+1})|$ & iter \\ \hline
0.6 & -0.58402 & 8.2311e-08 & 0.001664 & 500 & -0.58401 & 2.9398e-08 & 5.9231e-04 & 500 \\
0.65 & -0.584 & 2.0291e-08 & 3.3261e-04 & 500 & -0.584 & 9.9696e-09 & 1.3359e-04 & 411 \\
0.7 & -0.584 & 9.9746e-09 & 9.2638e-05 & 354 & -0.584 & 9.9316e-09 & 5.6773e-05 & 220 \\
0.75 & -0.584 & 9.9858e-09 & 4.0402e-05 & 196 & -0.584 & 9.7458e-09 & 2.35e-05 & 119 \\
0.8 & -0.584 & 9.6983e-09 & 1.6881e-05 & 110 & -0.584 & 9.5628e-09 & 9.4891e-06 & 65 \\
0.85 & -0.584 & 9.4322e-09 & 6.8049e-06 & 62 & -0.584 & 9.3134e-09 & 3.6307e-06 & 36 \\
0.9 & -0.584 & 8.923e-09 & 2.4689e-06 & 35 & -0.584 & 9.6151e-09 & 1.3022e-06 & 20 \\
0.95 & -0.584 & 9.0322e-09 & 7.4692e-07 & 19 & -0.584 & 3.6166e-09 & 1.3015e-07 & 12 \\
1 & -0.584 & 3.0876e-06 & 8.8694e-10 & 6 & -0.584 & 2.2023e-10 & 5.329e-15 & 5 \\ \hline
\end{tabular}
\captionof{table}{Fractional Newton$_2$ and Traub results for $f_1(x)$ with Riemann-Liouville derivative and initial estimation $x_0=-1.5$}\label{t8}
\end{center}
Our second function is $f_2(x)=ix^{1.8}-x^{0.9}-16$, with roots $\bar{x}_1=2.90807-4.24908i$ and $\bar{x}_2=-3.85126+1.74602i$. In tables \ref{t9}-\ref{t16} can be observed exactly the same behavior for $f_2(x)$ as in tables \ref{t1}-\ref{t8} for $f_1(x)$.
\begin{center}
\begin{tabular}{|c|c|c|c|c|c|c|c|c|}
\hline
 & \multicolumn{4}{c|}{CFN$_1$ method} & \multicolumn{4}{c|}{CFN$_2$ method} \\ \hline
$\alpha$ & $\bar{x}$ & $|x_{k+1}-x_k|$ & $|f(x_{k+1})|$ & iter & $\bar{x}$ & $|x_{k+1}-x_k|$ & $|f(x_{k+1})|$ & iter \\ \hline
0.6 & -3.8512+1.746i & 9.6447e-09 & 9.1533e-08 & 135 & -3.8518+1.7463i & 1.9397e-06 & 0.0040584 & 500 \\
0.65 & -3.8512+1.746i & 8.7093e-09 & 7.1238e-08 & 83 & -3.8513+1.746i & 4.5637e-07 & 7.7439e-04 & 500 \\
0.7 & -3.8512+1.746i & 8.0195e-09 & 5.5214e-08 & 57 & -3.8512+1.746i & 7.0051e-08 & 9.4467e-05 & 500 \\
0.75 & -3.8512+1.746i & 7.7567e-09 & 4.3576e-08 & 41 & -3.8512+1.746i & 9.934e-09 & 8.9604e-06 & 430 \\
0.8 & -3.8512+1.746i & 9.2798e-09 & 4.0719e-08 & 30 & -3.8512+1.746i & 9.9356e-09 & 3.2931e-06 & 207 \\
0.85 & -3.8512+1.746i & 5.0655e-09 & 1.623e-08 & 23 & -3.8512+1.746i & 9.9551e-09 & 1.1831e-06 & 100 \\
0.9 & -3.8512+1.746i & 4.2584e-09 & 8.8324e-09 & 17 & -3.8512+1.746i & 9.5415e-09 & 3.8313e-07 & 49 \\
0.95 & -3.8512+1.746i & 2.6355e-09 & 2.6468e-09 & 12 & -3.8512+1.746i & 7.6885e-09 & 8.4483e-08 & 23 \\
1 & -3.8512+1.746i & 5.3275e-06 & 1.5148e-11 & 4 & -3.8512+1.746i & 5.3275e-06 & 1.5148e-11 & 4 \\ \hline
\end{tabular}
\captionof{table}{Fractional Newton$_1$ and Newton$_2$ results for $f_2(x)$ with Caputo derivative and initial estimation $x_0=-4.5$}\label{t9}
\end{center}
\begin{center}
\begin{tabular}{|c|c|c|c|c|c|c|c|c|}
\hline
 & \multicolumn{4}{c|}{CFN$_1$ method} & \multicolumn{4}{c|}{CFN$_2$ method} \\ \hline
$\alpha$ & $\bar{x}$ & $|x_{k+1}-x_k|$ & $|f(x_{k+1})|$ & iter & $\bar{x}$ & $|x_{k+1}-x_k|$ & $|f(x_{k+1})|$ & iter \\ \hline
0.6 & -2.13e+06+6.44e+06i & 7.3797e+03 & 1.9819e+12 & 500 & -3.8518+1.7463i & 2.2298e-06 & 0.0044117 & 500 \\
0.65 & -2.66e+06+2.85e+06i & 1.111e+04 & 7.3325e+11 & 500 & -3.8513+1.746i & 5.3091e-07 & 8.5424e-04 & 500 \\
0.7 & -9.52e+05+8.15e+02i & 8.7223e+03 & 5.783e+10 & 500 & -3.8512+1.746i & 8.1441e-08 & 1.0495e-04 & 500 \\
0.75 & -33.9538-2.0387i & 8.805 & 5.7372e+02 & 500 & -3.8512+1.746i & 9.9227e-09 & 8.9528e-06 & 449 \\
0.8 & -3.8512+1.746i & 6.2175e-09 & 2.7282e-08 & 279 & -3.8512+1.746i & 9.8951e-09 & 3.2824e-06 & 226 \\
0.85 & -3.8512+1.746i & 6.3861e-09 & 2.0462e-08 & 148 & -3.8512+1.746i & 9.7471e-09 & 1.1622e-06 & 120 \\
0.9 & -3.8512+1.746i & 3.3987e-09 & 7.0493e-09 & 83 & -3.8512+1.746i & 9.6653e-09 & 3.8753e-07 & 68 \\
0.95 & -3.8512+1.746i & 3.3555e-09 & 3.3698e-09 & 46 & -3.8512+1.746i & 6.6872e-09 & 7.4289e-08 & 42 \\
1 & -3.8512+1.746i & 3.5728e-09 & 5.4025e-15 & 23 & -3.8512+1.746i & 3.5728e-09 & 5.4025e-15 & 23 \\ \hline
\end{tabular}
\captionof{table}{Fractional Newton$_1$ and Newton$_2$ results for $f_2(x)$ with Caputo derivative and initial estimation $x_0=-4.5+1e07i$}\label{t10}
\end{center}
\begin{center}
\begin{tabular}{|c|c|c|c|c|c|c|c|c|}
\hline
 & \multicolumn{4}{c|}{CFN$_1$ method} & \multicolumn{4}{c|}{CFN$_2$ method} \\ \hline
$\alpha$ & $\bar{x}$ & $|x_{k+1}-x_k|$ & $|f(x_{k+1})|$ & iter & $\bar{x}$ & $|x_{k+1}-x_k|$ & $|f(x_{k+1})|$ & iter \\ \hline
0.6 & -2.13e+06+6.44e+06i & 7.3797e+03 & 1.9819e+12 & 500 & -3.8518+1.7463i & 2.2298e-06 & 0.0044117 & 500 \\
0.65 & -2.66e+06+2.85e+06i & 1.111e+04 & 7.3325e+11 & 500 & -3.8513+1.746i & 5.3091e-07 & 8.5424e-04 & 500 \\
0.7 & -9.52e+05+8.16e+02i & 8.7223e+03 & 5.783e+10 & 500 & -3.8512+1.746i & 8.1441e-08 & 1.0495e-04 & 500 \\
0.75 & -33.9537-2.0384i & 8.805 & 5.7372e+02 & 500 & -3.8512+1.746i & 9.9227e-09 & 8.9528e-06 & 449 \\
0.8 & -3.8512+1.746i & 6.2175e-09 & 2.7282e-08 & 279 & -3.8512+1.746i & 9.8951e-09 & 3.2824e-06 & 226 \\
0.85 & -3.8512+1.746i & 6.3862e-09 & 2.0462e-08 & 148 & -3.8512+1.746i & 9.7471e-09 & 1.1622e-06 & 120 \\
0.9 & -3.8512+1.746i & 3.3987e-09 & 7.0493e-09 & 83 & -3.8512+1.746i & 9.6653e-09 & 3.8753e-07 & 68 \\
0.95 & -3.8512+1.746i & 3.3555e-09 & 3.3698e-09 & 46 & -3.8512+1.746i & 6.6872e-09 & 7.4289e-08 & 42 \\
1 & -3.8512+1.746i & 3.5728e-09 & 3.662e-15 & 23 & -3.8512+1.746i & 3.5728e-09 & 3.662e-15 & 23 \\ \hline
\end{tabular}
\captionof{table}{Fractional Newton$_1$ and Newton$_2$ results for $f_2(x)$ with Caputo derivative and initial estimation $x_0=1e07i$}\label{t11}
\end{center}
\begin{center}
\begin{tabular}{|c|c|c|c|c|c|c|c|c|}
\hline
 & \multicolumn{4}{c|}{R-LFN$_1$ method} & \multicolumn{4}{c|}{R-LFN$_2$ method} \\ \hline
$\alpha$ & $\bar{x}$ & $|x_{k+1}-x_k|$ & $|f(x_{k+1})|$ & iter & $\bar{x}$ & $|x_{k+1}-x_k|$ & $|f(x_{k+1})|$ & iter \\ \hline
0.6 & -3.8512+1.746i & 9.8908e-09 & 7.0198e-08 & 321 & -3.8514+1.7461i & 7.6079e-07 & 0.001593 & 500 \\
0.65 & -3.8512+1.746i & 9.5716e-09 & 6.0692e-08 & 105 & -3.8513+1.746i & 1.8874e-07 & 3.2037e-04 & 500 \\
0.7 & -3.8512+1.746i & 8.3392e-09 & 4.5994e-08 & 61 & -3.8512+1.746i & 2.9902e-08 & 4.0532e-05 & 500 \\
0.75 & -3.8512+1.746i & 6.5731e-09 & 3.048e-08 & 43 & 2.908-4.249i & 9.9785e-09 & 7.4479e-06 & 367 \\
0.8 & -3.8512+1.746i & 7.592e-09 & 2.8258e-08 & 30 & 2.908-4.249i & 9.8775e-09 & 2.8264e-06 & 192 \\
0.85 & -3.8512+1.746i & 8.3034e-09 & 2.3137e-08 & 22 & -3.8512+1.746i & 9.4686e-09 & 1.0137e-06 & 93 \\
0.9 & -3.8512+1.746i & 9.4241e-09 & 1.7387e-08 & 16 & -3.8512+1.746i & 9.9072e-09 & 3.6628e-07 & 45 \\
0.95 & -3.8512+1.746i & 1.649e-09 & 1.5036e-09 & 12 & -3.8512+1.746i & 7.7314e-09 & 8.0728e-08 & 22 \\
1 & -3.8512+1.746i & 5.3275e-06 & 1.5148e-11 & 4 & -3.8512+1.746i & 5.3275e-06 & 1.5148e-11 & 4 \\ \hline
\end{tabular}
\captionof{table}{Fractional Newton$_1$ and Newton$_2$ results for $f_2(x)$ with Riemann-Liouville derivative and initial estimation $x_0=-4.5$}\label{t12}
\end{center}
\begin{center}
\begin{tabular}{|c|c|c|c|c|c|c|c|c|}
\hline
 & \multicolumn{4}{c|}{R-LFN$_1$ method} & \multicolumn{4}{c|}{R-LFN$_2$ method} \\ \hline
$\alpha$ & $\bar{x}$ & $|x_{k+1}-x_k|$ & $|f(x_{k+1})|$ & iter & $\bar{x}$ & $|x_{k+1}-x_k|$ & $|f(x_{k+1})|$ & iter \\ \hline
0.6 & -2.13e+06+6.44e+06i & 7.3797e+03 & 1.9819e+12 & 500 & -3.8515+1.7461i & 8.7906e-07 & 0.001737 & 500 \\
0.65 & -2.66e+06+2.85e+06i & 1.111e+04 & 7.3325e+11 & 500 & -3.8513+1.746i & 2.2124e-07 & 3.5516e-04 & 500 \\
0.7 & -9.52e+05+8.15e+02i & 8.7223e+03 & 5.783e+10 & 500 & -3.8512+1.746i & 3.5062e-08 & 4.53e-05 & 500 \\
0.75 & -33.9404-2.18i & 8.8264 & 5.7359e+02 & 500 & -3.8512+1.746i & 9.9505e-09 & 7.3526e-06 & 371 \\
0.8 & -3.8512+1.746i & 6.9815e-09 & 2.5986e-08 & 277 & -3.8512+1.746i & 9.8631e-09 & 2.8101e-06 & 197 \\
0.85 & -3.8512+1.746i & 7.3824e-09 & 2.057e-08 & 147 & -3.8512+1.746i & 9.4884e-09 & 1.0155e-06 & 110 \\
0.9 & -3.8512+1.746i & 4.3678e-09 & 8.0585e-09 & 82 & -3.8512+1.746i & 9.0672e-09 & 3.3871e-07 & 65 \\
0.95 & -3.8512+1.746i & 2.0381e-09 & 1.8585e-09 & 46 & -3.8512+1.746i & 7.0456e-09 & 7.4115e-08 & 41 \\
1 & -3.8512+1.746i & 3.5728e-09 & 5.4025e-15 & 23 & -3.8512+1.746i & 3.5728e-09 & 5.4025e-15 & 23 \\ \hline
\end{tabular}
\captionof{table}{Fractional Newton$_1$ and Newton$_2$ results for $f_2(x)$ with Riemann-Liouville derivative and initial estimation $x_0=-4.5+1e07i$}\label{t13}
\end{center}
\begin{center}
\begin{tabular}{|c|c|c|c|c|c|c|c|c|}
\hline
 & \multicolumn{4}{c|}{R-LFN$_1$ method} & \multicolumn{4}{c|}{R-LFN$_2$ method} \\ \hline
$\alpha$ & $\bar{x}$ & $|x_{k+1}-x_k|$ & $|f(x_{k+1})|$ & iter & $\bar{x}$ & $|x_{k+1}-x_k|$ & $|f(x_{k+1})|$ & iter \\ \hline
0.6 & -2.13e+06+6.44e+06i & 7.3797e+03 & 1.9819e+12 & 500 & -3.8515+1.7461i & 8.7906e-07 & 0.001737 & 500 \\
0.65 & -2.66e+06+2.85e+06i & 1.111e+04 & 7.3325e+11 & 500 & -3.8513+1.746i & 2.2124e-07 & 3.5516e-04 & 500 \\
0.7 & -9.52e+05+8.16e+02i & 8.7223e+03 & 5.783e+10 & 500 & -3.8512+1.746i & 3.5062e-08 & 4.53e-05 & 500 \\
0.75 & -33.9403-2.1796i & 8.8264 & 5.7358e+02 & 500 & -3.8512+1.746i & 9.9505e-09 & 7.3526e-06 & 371 \\
0.8 & -3.8512+1.746i & 6.9817e-09 & 2.5987e-08 & 277 & -3.8512+1.746i & 9.8631e-09 & 2.8101e-06 & 197 \\
0.85 & -3.8512+1.746i & 7.3825e-09 & 2.0571e-08 & 147 & -3.8512+1.746i & 9.4884e-09 & 1.0155e-06 & 110 \\
0.9 & -3.8512+1.746i & 4.3678e-09 & 8.0585e-09 & 82 & -3.8512+1.746i & 9.0672e-09 & 3.3871e-07 & 65 \\
0.95 & -3.8512+1.746i & 2.0381e-09 & 1.8584e-09 & 46 & -3.8512+1.746i & 7.0456e-09 & 7.4115e-08 & 41 \\
1 & -3.8512+1.746i & 3.5728e-09 & 3.662e-15 & 23 & -3.8512+1.746i & 3.5728e-09 & 3.662e-15 & 23 \\ \hline
\end{tabular}
\captionof{table}{Fractional Newton$_1$ and Newton$_2$ results for $f_2(x)$ with Riemann-Liouville derivative and initial estimation $x_0=1e07i$}\label{t14}
\end{center}
\begin{center}
\begin{tabular}{|c|c|c|c|c|c|c|c|c|}
\hline
 & \multicolumn{4}{c|}{CFN$_2$ method} & \multicolumn{4}{c|}{CFT method} \\ \hline
$\alpha$ & $\bar{x}$ & $|x_{k+1}-x_k|$ & $|f(x_{k+1})|$ & iter & $\bar{x}$ & $|x_{k+1}-x_k|$ & $|f(x_{k+1})|$ & iter \\ \hline
0.6 & -3.8518+1.7463i & 1.9397e-06 & 0.0040584 & 500 & -3.8514+1.7461i & 6.9328e-07 & 0.0014451 & 500 \\
0.65 & -3.8513+1.746i & 4.5637e-07 & 7.7439e-04 & 500 & -3.8512+1.746i & 1.2842e-07 & 2.1663e-04 & 500 \\
0.7 & -3.8512+1.746i & 7.0051e-08 & 9.4467e-05 & 500 & -3.8512+1.746i & 1.577e-08 & 2.0497e-05 & 500 \\
0.75 & -3.8512+1.746i & 9.934e-09 & 8.9604e-06 & 430 & 2.908-4.249i & 9.859e-09 & 5.2033e-06 & 257 \\
0.8 & -3.8512+1.746i & 9.9356e-09 & 3.2931e-06 & 207 & -3.8512+1.746i & 9.6449e-09 & 1.8367e-06 & 120 \\
0.85 & -3.8512+1.746i & 9.9551e-09 & 1.1831e-06 & 100 & -3.8512+1.746i & 9.1797e-09 & 6.0224e-07 & 57 \\
0.9 & -3.8512+1.746i & 9.5415e-09 & 3.8313e-07 & 49 & -3.8512+1.746i & 9.1356e-09 & 1.8638e-07 & 27 \\
0.95 & -3.8512+1.746i & 7.6885e-09 & 8.4483e-08 & 23 & -3.8512+1.746i & 5.5216e-09 & 2.6074e-08 & 13 \\
1 & -3.8512+1.746i & 5.3275e-06 & 1.5148e-11 & 4 & -3.8512+1.746i & 1.1681e-05 & 3.5527e-15 & 3 \\ \hline
\end{tabular}
\captionof{table}{Fractional Newton$_2$ and Traub results for $f_2(x)$ with Caputo derivative and initial estimation $x_0=-4.5$}\label{t15}
\end{center}
\begin{center}
\begin{tabular}{|c|c|c|c|c|c|c|c|c|}
\hline
 & \multicolumn{4}{c|}{R-LFN$_2$ method} & \multicolumn{4}{c|}{R-LFT method} \\ \hline
$\alpha$ & $\bar{x}$ & $|x_{k+1}-x_k|$ & $|f(x_{k+1})|$ & iter & $\bar{x}$ & $|x_{k+1}-x_k|$ & $|f(x_{k+1})|$ & iter \\ \hline
0.6 & -3.8514+1.7461i & 7.6079e-07 & 0.001593 & 500 & -3.8513+1.746i & 2.7288e-07 & 5.6848e-04 & 500 \\
0.65 & -3.8513+1.746i & 1.8874e-07 & 3.2037e-04 & 500 & -3.8512+1.746i & 5.3251e-08 & 8.9778e-05 & 500 \\
0.7 & -3.8512+1.746i & 2.9902e-08 & 4.0532e-05 & 500 & -3.8512+1.746i & 9.9711e-09 & 1.1573e-05 & 435 \\
0.75 & 2.908-4.249i & 9.9785e-09 & 7.4479e-06 & 367 & -3.8512+1.746i & 9.9117e-09 & 4.3496e-06 & 215 \\
0.8 & 2.908-4.249i & 9.8775e-09 & 2.8264e-06 & 192 & -3.8512+1.746i & 9.5765e-09 & 1.5662e-06 & 105 \\
0.85 & -3.8512+1.746i & 9.4686e-09 & 1.0137e-06 & 93 & -3.8512+1.746i & 9.7086e-09 & 5.6297e-07 & 52 \\
0.9 & -3.8512+1.746i & 9.9072e-09 & 3.6628e-07 & 45 & -3.8512+1.746i & 8.1865e-09 & 1.5588e-07 & 25 \\
0.95 & -3.8512+1.746i & 7.7314e-09 & 8.0728e-08 & 22 & -3.8512+1.746i & 8.0857e-09 & 3.4712e-08 & 12 \\
1 & -3.8512+1.746i & 5.3275e-06 & 1.5148e-11 & 4 & -3.8512+1.746i & 1.1681e-05 & 3.5527e-15 & 3 \\ \hline
\end{tabular}
\captionof{table}{Fractional Newton$_2$ and Traub results for $f_2(x)$ with Riemann-Liouville derivative and initial estimation $x_0=-4.5$}\label{t16}
\end{center}
Our third function is $f_3(x)=e^x-1$ with only real root $\bar{x}=0$. It is necessary to use a value of $\alpha$ close to 1 to ensure the convergence. In this case have not been used imaginary values for the initial estimations due to erratic behavior of results, i.e., not necessarily CFN$_2$ and R-LFN$_2$ will have better results than CFN$_1$ and R-LFN$_1$ respectively by increasing the absolute value of imaginary part of initial estimation. In table \ref{t17} we can see that CFN$_2$ requires much less iterations than CFN$_1$, while in table \ref{t18} the behavior is almost the same.
\begin{center}
\begin{tabular}{|c|c|c|c|c|c|c|c|c|}
\hline
 & \multicolumn{4}{c|}{CFN$_1$ method} & \multicolumn{4}{c|}{CFN$_2$ method} \\ \hline
$\alpha$ & $\bar{x}$ & $|x_{k+1}-x_k|$ & $|f(x_{k+1})|$ & iter & $\bar{x}$ & $|x_{k+1}-x_k|$ & $|f(x_{k+1})|$ & iter \\ \hline
0.9 & -8.1428e-05+4.6123e-04i & 9.2247e-04 & 4.6835e-04 & 500 & 2.6106e-09 & 2.5158e-08 & 2.6106e-09 & 8 \\
0.91 & -3.2586e-05+2.0793e-04i & 4.1587e-04 & 2.1047e-04 & 500 & 1.2569e-09 & 1.351e-08 & 1.2569e-09 & 8 \\
0.92 & -1.0636e-05+7.7362e-05i & 1.5472e-04 & 7.8089e-05 & 500 & 7.2879e-09 & 8.8464e-08 & 7.2879e-09 & 7 \\
0.93 & -2.598e-06+2.187e-05i & 4.374e-05 & 2.2023e-05 & 500 & 3.2741e-09 & 4.5591e-08 & 3.2741e-09 & 7 \\
0.94 & -4.1185e-07+4.0939e-06i & 8.1878e-06 & 4.1145e-06 & 500 & 1.3017e-09 & 2.1227e-08 & 1.3017e-09 & 7 \\
0.95 & -3.2825e-08+3.9614e-07i & 7.9228e-07 & 3.975e-07 & 500 & 9.0542e-09 & 1.7783e-07 & 9.0542e-09 & 6 \\
0.96 & -7.9157e-10+1.2077e-08i & 2.4154e-08 & 1.2102e-08 & 500 & 2.9901e-09 & 7.3682e-08 & 2.9901e-09 & 6 \\
0.97 & 1.9341e-09-8.4739e-09i & 2.1421e-08 & 8.6918e-09 & 17 & 7.2773e-10 & 2.3997e-08 & 7.2773e-10 & 6 \\
0.98 & 3.0856e-09-5.6805e-09i & 2.2259e-08 & 6.4645e-09 & 11 & 5.2033e-09 & 2.5831e-07 & 5.2033e-09 & 5 \\
0.99 & 7.0684e-09-6.7644e-09i & 6.6485e-08 & 9.7837e-09 & 7 & 4.0463e-10 & 4.0319e-08 & 4.0463e-10 & 5 \\
1 & 6.5401e-17 & 1.5194e-08 & 0 & 4 & 6.5401e-17 & 1.5194e-08 & 0 & 4 \\ \hline
\end{tabular}
\captionof{table}{Fractional Newton$_1$ and Newton$_2$ results for $f_3(x)$ with Caputo derivative and initial estimation $x_0=0.2$}\label{t17}
\end{center}
\begin{center}
\begin{tabular}{|c|c|c|c|c|c|c|c|c|}
\hline
 & \multicolumn{4}{c|}{R-LFN$_1$ method} & \multicolumn{4}{c|}{R-LFN$_2$ method} \\ \hline
$\alpha$ & $\bar{x}$ & $|x_{k+1}-x_k|$ & $|f(x_{k+1})|$ & iter & $\bar{x}$ & $|x_{k+1}-x_k|$ & $|f(x_{k+1})|$ & iter \\ \hline
0.9 & -1.01e-04+4.84e-08i & 2.3837e-07 & 1.0176e-04 & 500 & -3.72e-04+1.23e-19i & 6.7672e-07 & 3.7289e-04 & 500 \\
0.91 & -9.71e-05+3.77e-08i & 2.2269e-07 & 9.7116e-05 & 500 & -3.15e-04+1.32e-22i & 5.7799e-07 & 3.1534e-04 & 500 \\
0.92 & -9.16e-05+3.03e-08i & 2.0596e-07 & 9.1637e-05 & 500 & -2.63e-04+7.94e-23i & 4.8744e-07 & 2.6335e-04 & 500 \\
0.93 & -8.5e-05+2.52e-08i & 1.8797e-07 & 8.5216e-05 & 500 & -2.16e-04+7.45e-20i & 4.0451e-07 & 2.1646e-04 & 500 \\
0.94 & -7.77e-05+2.11e-08i & 1.6846e-07 & 7.7728e-05 & 500 & -1.74e-04-1.32e-23i & 3.2865e-07 & 1.7423e-04 & 500 \\
0.95 & -6.90e-05+1.74e-08i & 1.4717e-07 & 6.9027e-05 & 500 & -1.36e-04-1.98e-23i & 2.5931e-07 & 1.3628e-04 & 500 \\
0.96 & -5.89e-05+1.34e-08i & 1.2373e-07 & 5.8939e-05 & 500 & -1.02e-04+9.92e-24i & 1.9576e-07 & 1.0215e-04 & 500 \\
0.97 & -4.72e-05+9.08e-09i & 9.768e-08 & 4.7242e-05 & 500 & -7.03e-05+2.42e-20i & 1.33e-07 & 7.036e-05 & 500 \\
0.98 & -3.24e-05+3.98e-09i & 6.3507e-08 & 3.2418e-05 & 500 & -4.57e-05-1.75e-09i & 9.0754e-08 & 4.5746e-05 & 500 \\
0.99 & -1.84e-05-8.37e-10i & 3.7563e-08 & 1.8423e-05 & 500 & -2.13e-05+7.35e-21i & 4.2567e-08 & 2.1369e-05 & 500 \\
1 & 6.5401e-17 & 1.5194e-08 & 0 & 4 & 6.5401e-17 & 1.5194e-08 & 0 & 4 \\ \hline
\end{tabular}
\captionof{table}{Fractional Newton$_1$ and Newton$_2$ results for $f_3(x)$ with Riemann-Liouville derivative and initial estimation $x_0=0.2$}\label{t18}
\end{center}
Let us now compare Traub method with its first step for $f_3(x)$. In both tables \ref{t19} and \ref{t20} can be observed that Taub method requires less iterations than its first step.
\begin{center}
\begin{tabular}{|c|c|c|c|c|c|c|c|c|}
\hline
 & \multicolumn{4}{c|}{CFN$_2$ method} & \multicolumn{4}{c|}{CFT method} \\ \hline
$\alpha$ & $\bar{x}$ & $|x_{k+1}-x_k|$ & $|f(x_{k+1})|$ & iter & $\bar{x}$ & $|x_{k+1}-x_k|$ & $|f(x_{k+1})|$ & iter \\ \hline
0.9 & 2.6106e-09 & 2.5158e-08 & 2.6106e-09 & 8 & 9.2854e-09 & 3.1633e-07 & 9.2854e-09 & 5 \\
0.91 & 1.2569e-09 & 1.351e-08 & 1.2569e-09 & 8 & 4.3293e-09 & 1.7542e-07 & 4.3293e-09 & 5 \\
0.92 & 7.2879e-09 & 8.8464e-08 & 7.2879e-09 & 7 & 1.8388e-09 & 9.0552e-08 & 1.8388e-09 & 5 \\
0.93 & 3.2741e-09 & 4.5591e-08 & 3.2741e-09 & 7 & 6.9418e-10 & 4.2699e-08 & 6.9418e-10 & 5 \\
0.94 & 1.3017e-09 & 2.1227e-08 & 1.3017e-09 & 7 & 2.2469e-10 & 1.79e-08 & 2.2469e-10 & 5 \\
0.95 & 9.0542e-09 & 1.7783e-07 & 9.0542e-09 & 6 & 6.4561e-09 & 7.0013e-07 & 6.4561e-09 & 4 \\
0.96 & 2.9901e-09 & 7.3682e-08 & 2.9901e-09 & 6 & 1.8294e-09 & 2.9042e-07 & 1.8294e-09 & 4 \\
0.97 & 7.2773e-10 & 2.3997e-08 & 7.2773e-10 & 6 & 3.6228e-10 & 9.4541e-08 & 3.6228e-10 & 4 \\
0.98 & 5.2033e-09 & 2.5831e-07 & 5.2033e-09 & 5 & 3.7599e-11 & 1.9964e-08 & 3.7599e-11 & 4 \\
0.99 & 4.0463e-10 & 4.0319e-08 & 4.0463e-10 & 5 & 1.5139e-09 & 2.7685e-06 & 1.5139e-09 & 3 \\
1 & 6.5401e-17 & 1.5194e-08 & 0 & 4 & 1.3111e-17 & 1.7116e-08 & 0 & 3 \\ \hline
\end{tabular}
\captionof{table}{Fractional Newton$_2$ and Traub results for $f_3(x)$ with Caputo derivative and initial estimation $x_0=0.2$}\label{t19}
\end{center}
\begin{center}
\begin{tabular}{|c|c|c|c|c|c|c|c|c|}
\hline
 & \multicolumn{4}{c|}{R-LFN$_2$ method} & \multicolumn{4}{c|}{R-LFT method} \\ \hline
$\alpha$ & $\bar{x}$ & $|x_{k+1}-x_k|$ & $|f(x_{k+1})|$ & iter & $\bar{x}$ & $|x_{k+1}-x_k|$ & $|f(x_{k+1})|$ & iter \\ \hline
0.9 & -3.72e-04+1.23e-19i & 6.7672e-07 & 3.7289e-04 & 500 & -1.99e-04+3.82e-07i & 3.6114e-07 & 1.993e-04 & 500 \\
0.91 & -3.15e-04+1.32e-22i & 5.7799e-07 & 3.1534e-04 & 500 & -1.67e-04+5.22e-07i & 3.0667e-07 & 1.6748e-04 & 500 \\
0.92 & -2.63e-04+7.94e-23i & 4.8744e-07 & 2.6335e-04 & 500 & -1.38e-04+6.6e-07i & 2.5688e-07 & 1.3892e-04 & 500 \\
0.93 & -2.16e-04+7.45e-20i & 4.0451e-07 & 2.1646e-04 & 500 & -1.13e-04+7.52e-07i & 2.1135e-07 & 1.133e-04 & 500 \\
0.94 & -1.74e-04-1.32e-23i & 3.2865e-07 & 1.7423e-04 & 500 & -9.04e-05+7.47e-07i & 1.6986e-07 & 9.0407e-05 & 500 \\
0.95 & -1.36e-04-1.98e-23i & 2.5931e-07 & 1.3628e-04 & 500 & -7e-05+6.52e-07i & 1.3232e-07 & 7.0024e-05 & 500 \\
0.96 & -1.02e-04+9.92e-24i & 1.9576e-07 & 1.0215e-04 & 500 & -5.19e-05+5.46e-07i & 9.8323e-08 & 5.1901e-05 & 500 \\
0.97 & -7.03e-05+2.42e-20i & 1.33e-07 & 7.036e-05 & 500 & -3.46e-05+7.8e-07i & 6.319e-08 & 3.466e-05 & 500 \\
0.98 & -4.57e-05-1.75e-09i & 9.0754e-08 & 4.5746e-05 & 500 & -2.37e-05-1.66e-09i & 4.8178e-08 & 2.3718e-05 & 500 \\
0.99 & -2.13e-05+7.35e-21i & 4.2567e-08 & 2.1369e-05 & 500 & -1.11e-05+4.41e-08i & 2.2946e-08 & 1.1127e-05 & 500 \\
1 & 6.5401e-17 & 1.5194e-08 & 0 & 4 & 1.3111e-17 & 1.7116e-08 & 0 & 3 \\ \hline
\end{tabular}
\captionof{table}{Fractional Newton$_2$ and Traub results for $f_3(x)$ with Riemann-Liouville derivative and initial estimation $x_0=0.2$}\label{t20}
\end{center}
Our fourth function is $f_4(x)=\sin10x-0.5x+0.2$ with real roots $\bar{x}_1=-1.4523$, $\bar{x}_2=-1.3647$, $\bar{x}_3=-0.87345$, $\bar{x}_4=-0.6857$, $\bar{x}_5=-0.27949$, $\bar{x}_6=-0.021219$, $\bar{x}_7=0.31824$, $\bar{x}_8=0.64036$, $\bar{x}_9=0.91636$, $\bar{x}_{10}=1.3035$, $\bar{x}_{11}=1.5118$, $\bar{x}_{12}=1.9756$ and $\bar{x}_{13}=2.0977$.
\begin{center}
\begin{tabular}{|c|c|c|c|c|c|c|c|c|}
\hline
 & \multicolumn{4}{c|}{CFN$_1$ method} & \multicolumn{4}{c|}{CFN$_2$ method} \\ \hline
$\alpha$ & $\bar{x}$ & $|x_{k+1}-x_k|$ & $|f(x_{k+1})|$ & iter & $\bar{x}$ & $|x_{k+1}-x_k|$ & $|f(x_{k+1})|$ & iter \\ \hline
0.6 & 1.9756 & 8.408e-09 & 1.8613e-08 & 36 & 1.9757-5.4314e-07i & 1.4339e-07 & 2.6898e-04 & 500 \\
0.65 & 1.9756 & 9.1296e-09 & 1.6527e-08 & 24 & 2.0977 & 9.8912e-09 & 5.2792e-06 & 169 \\
0.7 & 1.3035 & 8.4237e-09 & 3.0203e-08 & 63 & 2.0977 & 9.9073e-09 & 3.0502e-06 & 121 \\
0.75 & 1.9756 & 4.0708e-09 & 4.2781e-09 & 10 & 1.9756+7.6535e-08i & 9.868e-09 & 4.5063e-06 & 235 \\
0.8 & 1.3035 & 9.976e-09 & 2.3499e-08 & 22 & 1.5118+7.3334e-08i & 9.5351e-09 & 1.7694e-06 & 76 \\
0.85 & 1.3035 & 4.7561e-09 & 8.2545e-09 & 18 & 0.91636 & 9.1437e-09 & 9.5832e-07 & 51 \\
0.9 & -0.6857+1.3551e-09i & 4.3677e-09 & 1.0885e-08 & 18 & -1.3647-1.2595e-09i & 9.5876e-09 & 2.2755e-07 & 42 \\
0.95 & 3.8845 & 0.1703 & 0.83083 & 500 & 20.89+0.30176i & 9.9619e-09 & 1.3042e-06 & 81 \\
1 & 4.3892 & 0.22037 & 2.0846 & 500 & 4.3892 & 0.22037 & 2.0846 & 500 \\ \hline
\end{tabular}
\captionof{table}{Fractional Newton$_1$ and Newton$_2$ results for $f_4(x)$ with Caputo derivative and initial estimation $x_0=3$}\label{t1}
\end{center}
Tests with imaginary initial estimations have not been included because there is no convergence for these cases with $f_4(x)$.
\begin{center}
\begin{tabular}{|c|c|c|c|c|c|c|c|c|}
\hline
 & \multicolumn{4}{c|}{R-LFN$_1$ method} & \multicolumn{4}{c|}{R-LFN$_2$ method} \\ \hline
$\alpha$ & $\bar{x}$ & $|x_{k+1}-x_k|$ & $|f(x_{k+1})|$ & iter & $\bar{x}$ & $|x_{k+1}-x_k|$ & $|f(x_{k+1})|$ & iter \\ \hline
0.6 & 1.9756 & 7.7242e-09 & 1.6581e-08 & 33 & 1.9757 & 1.4235e-07 & 2.7305e-04 & 500 \\
0.65 & 1.9756 & 7.3338e-09 & 1.2864e-08 & 23 & 2.0977 & 9.8532e-09 & 4.9292e-06 & 158 \\
0.7 & -52.34-1.8753i & 0.18129 & 6.9714e+07 & 500 & 2.0977 & 9.9595e-09 & 2.9517e-06 & 116 \\
0.75 & 1.9756 & 3.7455e-09 & 3.8012e-09 & 12 & 1.9756+4.5877e-08i & 9.8631e-09 & 4.5403e-06 & 238 \\
0.8 & -0.8821-0.015482i & 0.030964 & 0.15115 & 500 & 1.5118 & 9.9545e-09 & 1.817e-06 & 77 \\
0.85 & 1.5118 & 6.0787e-09 & 1.913e-08 & 32 & 0.91636 & 9.2941e-09 & 9.6601e-07 & 51 \\
0.9 & -0.6857-6.4093e-10i & 4.9751e-09 & 1.243e-08 & 15 & -1.3647+1.3711e-08i & 9.0915e-09 & 2.162e-07 & 42 \\
0.95 & 5.6793 & 0.28537 & 2.397 & 500 & 22.146+0.30774i & 6.5531e-09 & 9.4542e-07 & 51 \\
1 & 4.3892 & 0.22037 & 2.0846 & 500 & 4.3892 & 0.22037 & 2.0846 & 500 \\ \hline
\end{tabular}
\captionof{table}{Fractional Newton$_1$ and Newton$_2$ results for $f_4(x)$ with Riemann-Liouville derivative and initial estimation $x_0=3$}\label{t2}
\end{center}
\begin{center}
\begin{tabular}{|c|c|c|c|c|c|c|c|c|}
\hline
 & \multicolumn{4}{c|}{CFN$_2$ method} & \multicolumn{4}{c|}{CFT method} \\ \hline
$\alpha$ & $\bar{x}$ & $|x_{k+1}-x_k|$ & $|f(x_{k+1})|$ & iter & $\bar{x}$ & $|x_{k+1}-x_k|$ & $|f(x_{k+1})|$ & iter \\ \hline
0.6 & 1.9757-5.4314e-07i & 1.4339e-07 & 2.6898e-04 & 500 & 1.9757+4.4325e-08i & 5.3605e-08 & 9.8416e-05 & 500 \\
0.65 & 2.0977 & 9.8912e-09 & 5.2792e-06 & 169 & NaN+NaNi & NaN & NaN & 3 \\
0.7 & 2.0977 & 9.9073e-09 & 3.0502e-06 & 121 & -0.6857+6.042e-09i & 9.9241e-09 & 7.4684e-06 & 224 \\
0.75 & 1.9756+7.6535e-08i & 9.868e-09 & 4.5063e-06 & 235 & 1.9756+2.9642e-08i & 9.7623e-09 & 2.6495e-06 & 142 \\
0.8 & 1.5118+7.3334e-08i & 9.5351e-09 & 1.7694e-06 & 76 & 0.91636 & 9.7441e-09 & 1.3325e-06 & 52 \\
0.85 & 0.91636 & 9.1437e-09 & 9.5832e-07 & 51 & 0.31824 & 8.3509e-09 & 5.3216e-07 & 32 \\
0.9 & -1.3647-1.2595e-09i & 9.5876e-09 & 2.2755e-07 & 42 & 0.91636-1.7656e-09i & 8.5295e-09 & 1.7466e-07 & 23 \\
0.95 & 20.89+0.30176i & 9.9619e-09 & 1.3042e-06 & 81 & NaN+NaNi & NaN & NaN & 2 \\
1 & 4.3892 & 0.22037 & 2.0846 & 500 & 4.0815 & 0.25782 & 1.8154 & 500 \\ \hline
\end{tabular}
\captionof{table}{Fractional Newton$_2$ and Traub results for $f_4(x)$ with Caputo derivative and initial estimation $x_0=3$}\label{t3}
\end{center}
\begin{center}
\begin{tabular}{|c|c|c|c|c|c|c|c|c|}
\hline
 & \multicolumn{4}{c|}{R-LFN$_2$ method} & \multicolumn{4}{c|}{R-LFT method} \\ \hline
$\alpha$ & $\bar{x}$ & $|x_{k+1}-x_k|$ & $|f(x_{k+1})|$ & iter & $\bar{x}$ & $|x_{k+1}-x_k|$ & $|f(x_{k+1})|$ & iter \\ \hline
0.6 & 1.9757 & 1.4235e-07 & 2.7305e-04 & 500 & NaN+NaNi & NaN & NaN & 3 \\
0.65 & 2.0977 & 9.8532e-09 & 4.9292e-06 & 158 & 2.6655+0.051224i & 9.9621e-09 & 1.2812e-05 & 450 \\
0.7 & 2.0977 & 9.9595e-09 & 2.9517e-06 & 116 & 2.0977+2.0416e-09i & 9.8235e-09 & 1.7922e-06 & 74 \\
0.75 & 1.9756+4.5877e-08i & 9.8631e-09 & 4.5403e-06 & 238 & 1.9756+1.5019e-08i & 9.8996e-09 & 2.6985e-06 & 143 \\
0.8 & 1.5118 & 9.9545e-09 & 1.817e-06 & 77 & 0.91636 & 9.5075e-09 & 1.2959e-06 & 52 \\
0.85 & 0.91636 & 9.2941e-09 & 9.6601e-07 & 51 & -0.87345+1.6392e-10i & 8.282e-09 & 3.3276e-07 & 31 \\
0.9 & -1.3647+1.3711e-08i & 9.0915e-09 & 2.162e-07 & 42 & -0.021219+2.2677e-09i & 6.3651e-09 & 9.7585e-08 & 18 \\
0.95 & 22.146+0.30774i & 6.5531e-09 & 9.4542e-07 & 51 & NaN+NaNi & NaN & NaN & 2 \\
1 & 4.3892 & 0.22037 & 2.0846 & 500 & 4.0815 & 0.25782 & 1.8154 & 500 \\ \hline
\end{tabular}
\captionof{table}{Fractional Newton$_2$ and Traub results for $f_4(x)$ with Riemann-Liouville derivative and initial estimation $x_0=3$}\label{t4}
\end{center}
We can see that the number of iterations does not necessarily reduce when $\alpha$ increases and the methods converges to multiple roots.

\subsection{Convergence planes}

In this subsection we are going to analyze the dependence on the initial estimation of the Newton and Traub methods by using convergence planes defined in \cite{AAM} and used in \cite{AJR} for the same purposes as in this paper. \\
Let us regard $f_1(x)$. In figures \ref{f1}, \ref{f2}, \ref{f3} and \ref{f4} we can see that CFN$_1$ and R-LFN$_1$ have a higher percentage of convergence than CFN$_2$ and R-LFN$_2$ respectively, not only with real or imaginary initial estimations, but also with Caputo or Riemann-Liouville derivative. \\
\begin{minipage}[c]{0.5\textwidth}
\begin{center}
\includegraphics[width=\textwidth]{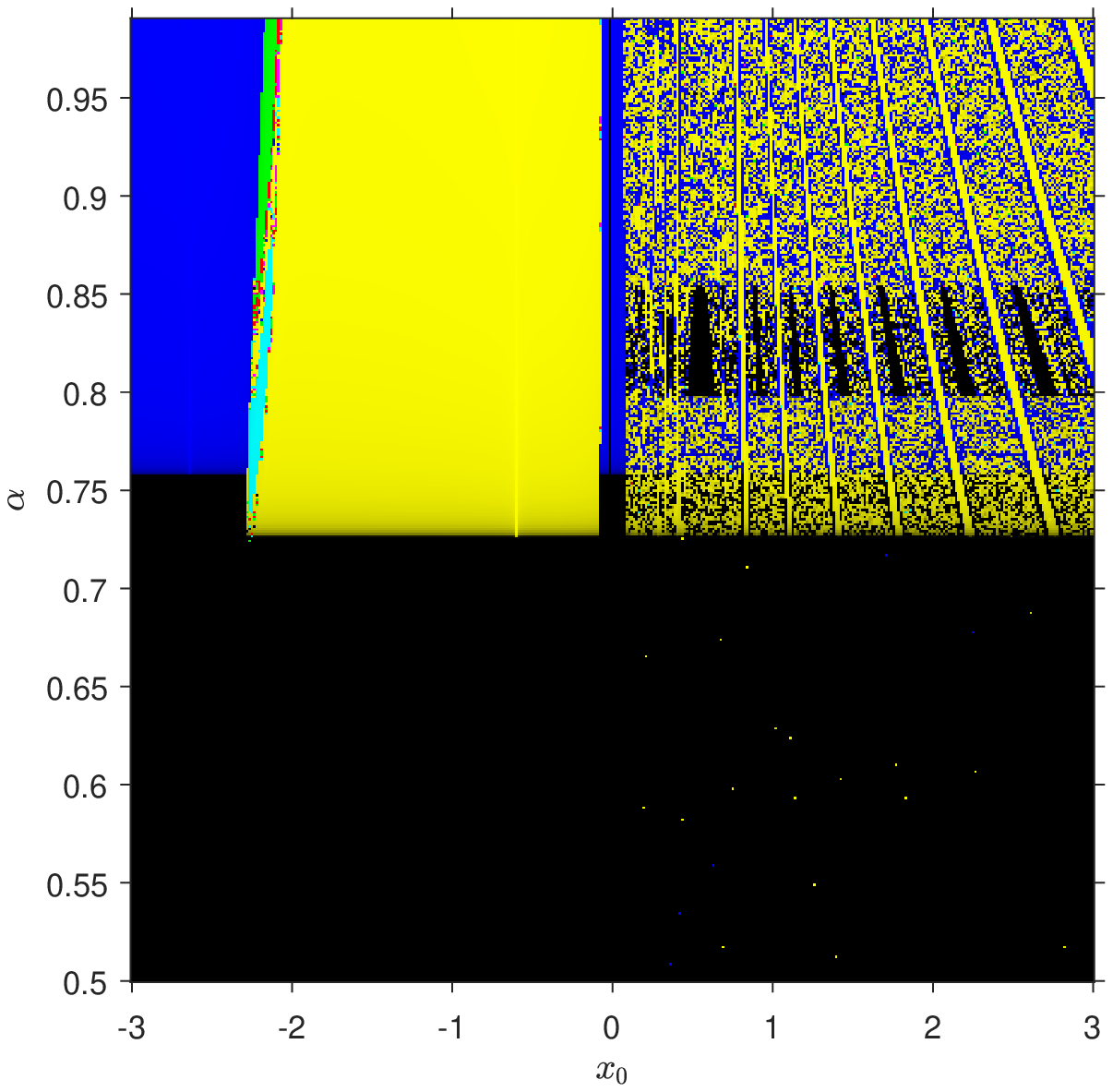}
(a) CFN$_1$, $-3\leq x_0\leq3$, 49.54\% convergence
\end{center}
\end{minipage}
\begin{minipage}[c]{0.5\textwidth}
\begin{center}
\includegraphics[width=\textwidth]{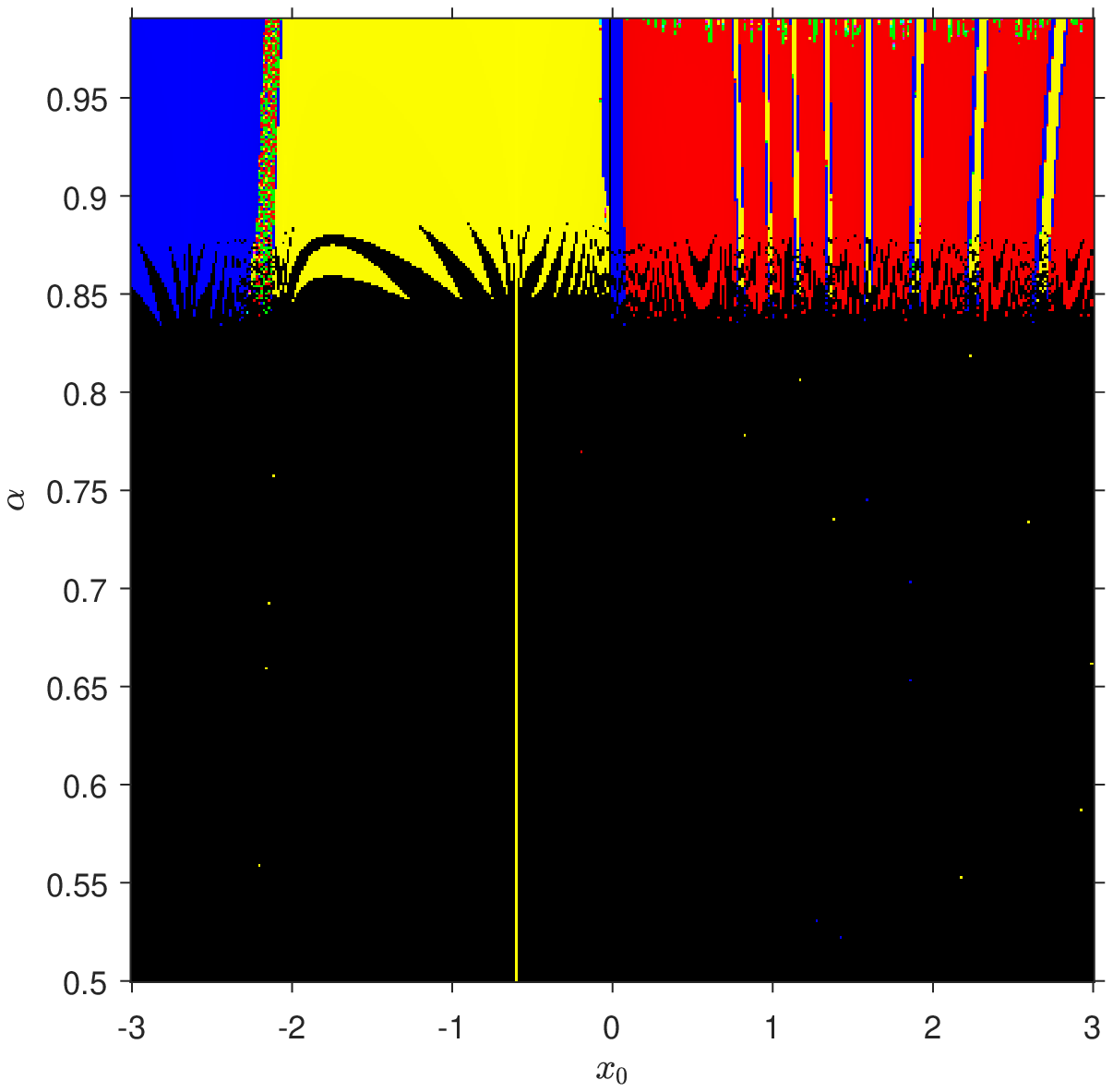}
(b) CFN$_2$, $-3\leq x_0\leq3$, 27.15\% convergence
\end{center}
\end{minipage}
\captionof{figure}{Convergence planes of CFN$_1$ and CFN$_2$ on $f_1(x)$ with $x_0$ real}\label{f1}
\begin{minipage}[c]{0.5\textwidth}
\begin{center}
\includegraphics[width=\textwidth]{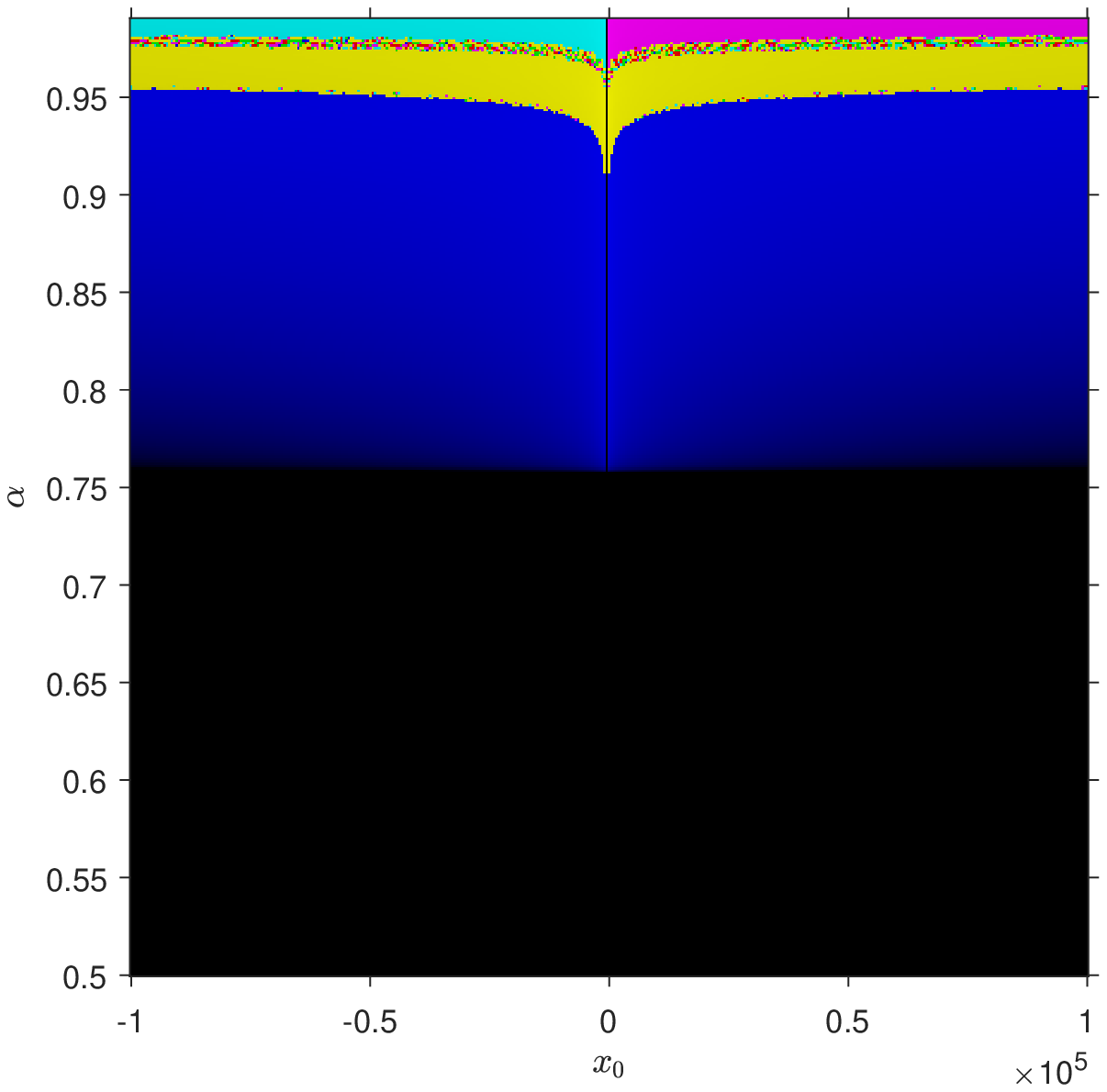}
(a) CFN$_1$, $-1e+05i\leq x_0\leq1e+05i$, 47.47\% convergence
\end{center}
\end{minipage}
\begin{minipage}[c]{0.5\textwidth}
\begin{center}
\includegraphics[width=\textwidth]{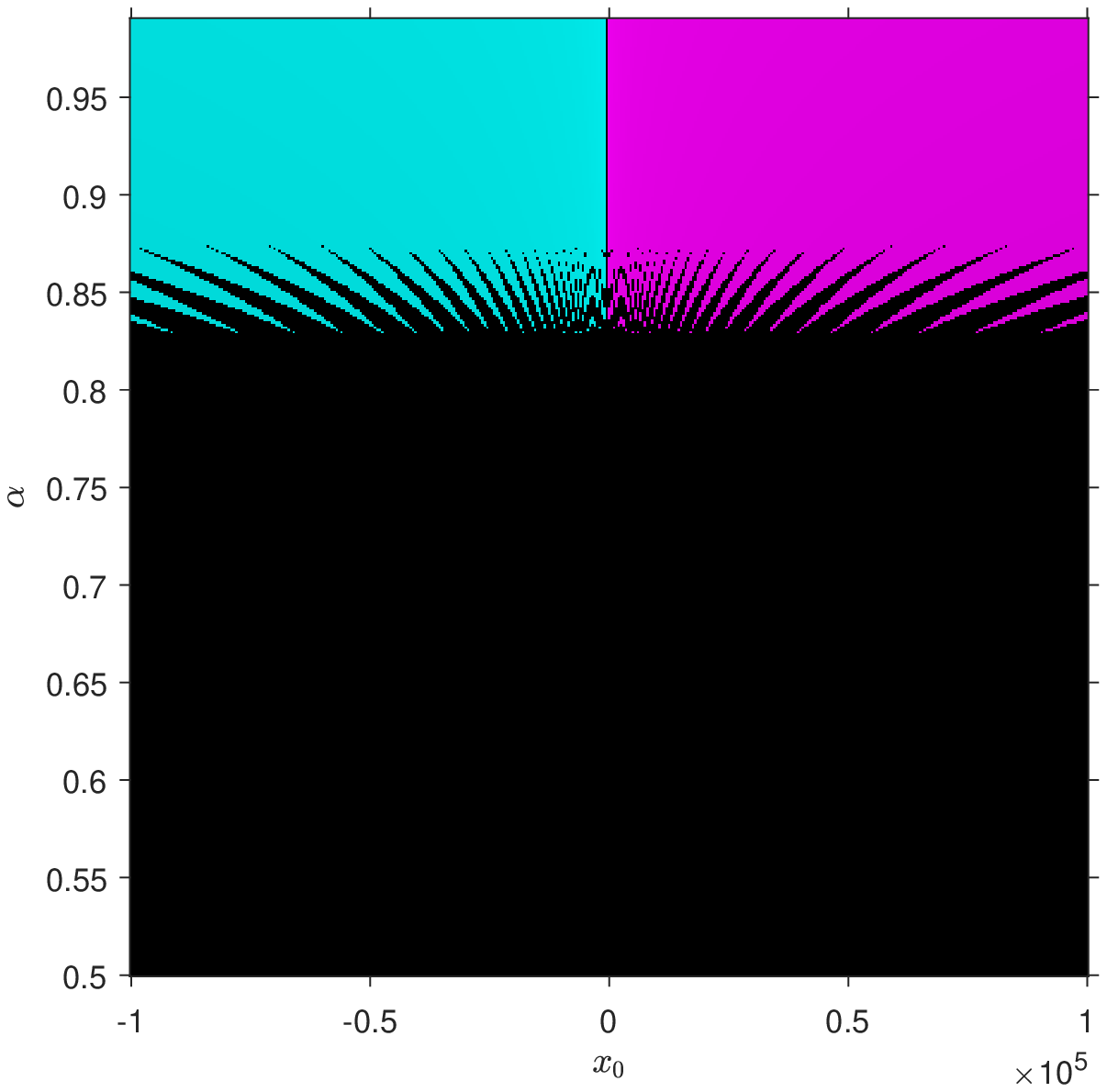}
(b) CFN$_2$, $-1e+05i\leq x_0\leq1e+05i$, 29\% convergence
\end{center}
\end{minipage}
\captionof{figure}{Convergence planes of CFN$_1$ and CFN$_2$ on $f_1(x)$ with $x_0$ imaginary}\label{f2}
\begin{minipage}[c]{0.5\textwidth}
\begin{center}
\includegraphics[width=\textwidth]{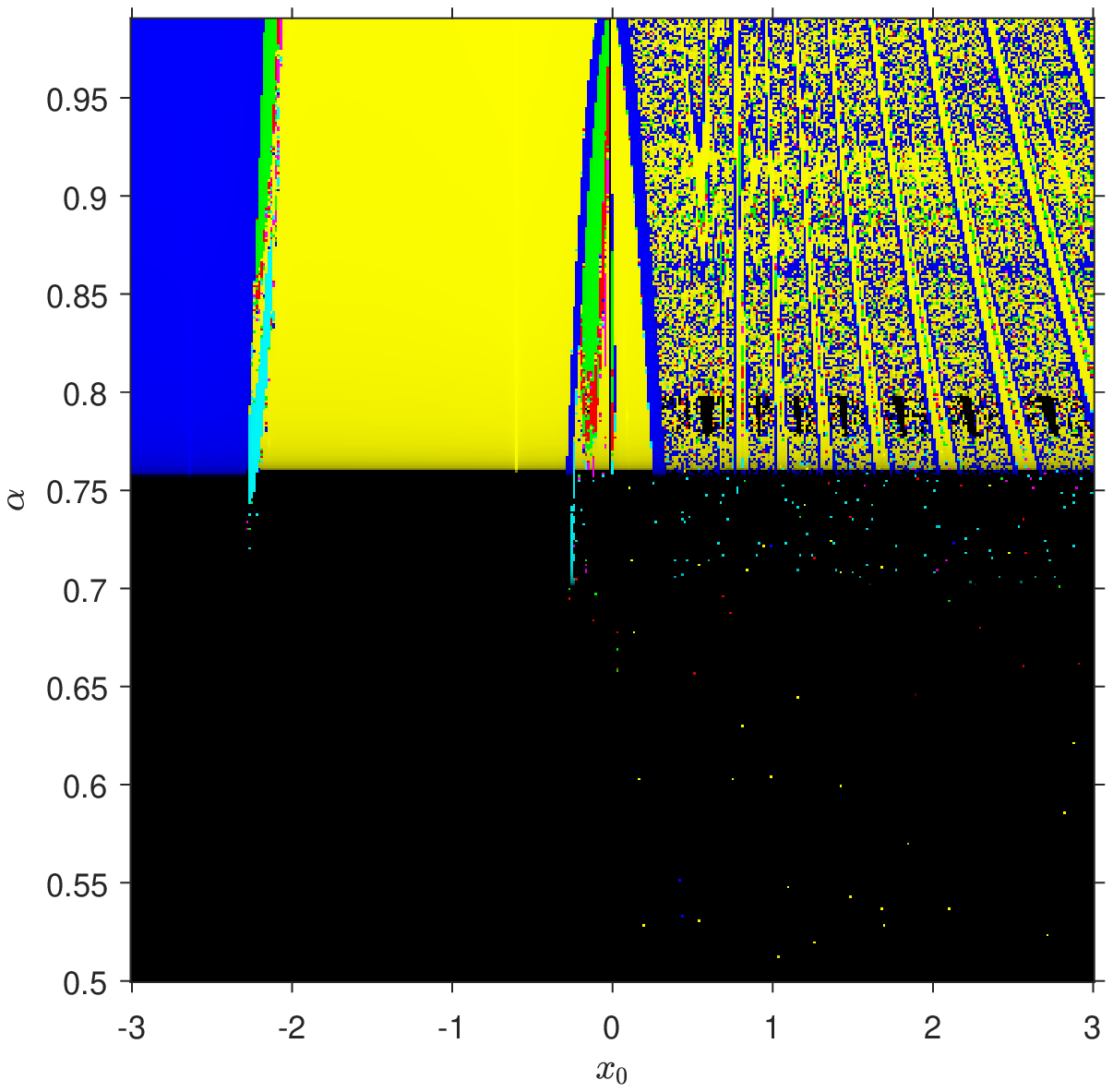}
(a) R-LFN$_1$, $-3\leq x_0\leq3$, 47.08\% convergence
\end{center}
\end{minipage}
\begin{minipage}[c]{0.5\textwidth}
\begin{center}
\includegraphics[width=\textwidth]{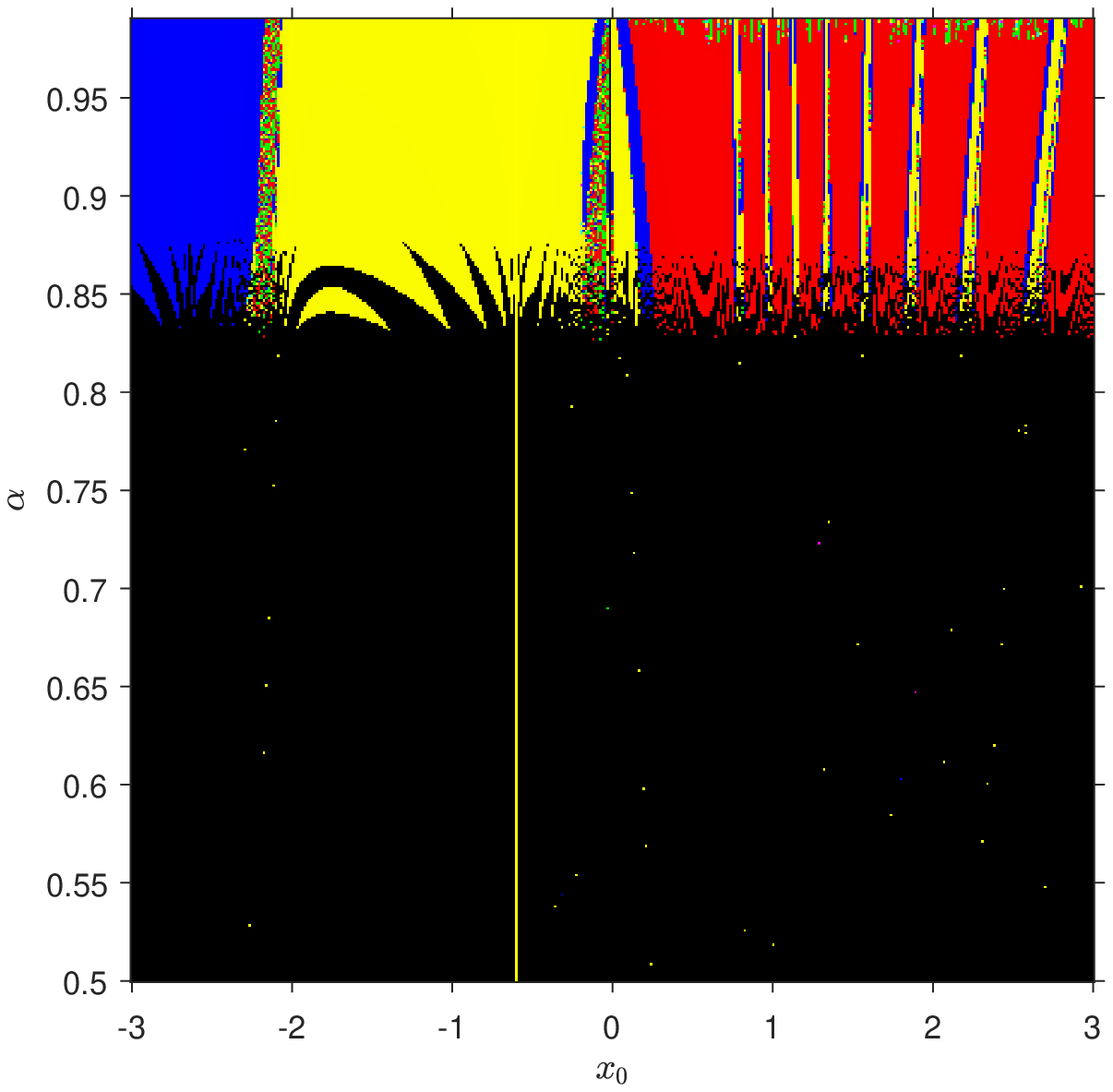}
(b) R-LFN$_2$, $-3\leq x_0\leq3$, 28.85\% convergence
\end{center}
\end{minipage}
\captionof{figure}{Convergence planes of R-LFN$_1$ and R-LFN$_2$ on $f_1(x)$ with $x_0$ real}\label{f3}
\begin{minipage}[c]{0.5\textwidth}
\begin{center}
\includegraphics[width=\textwidth]{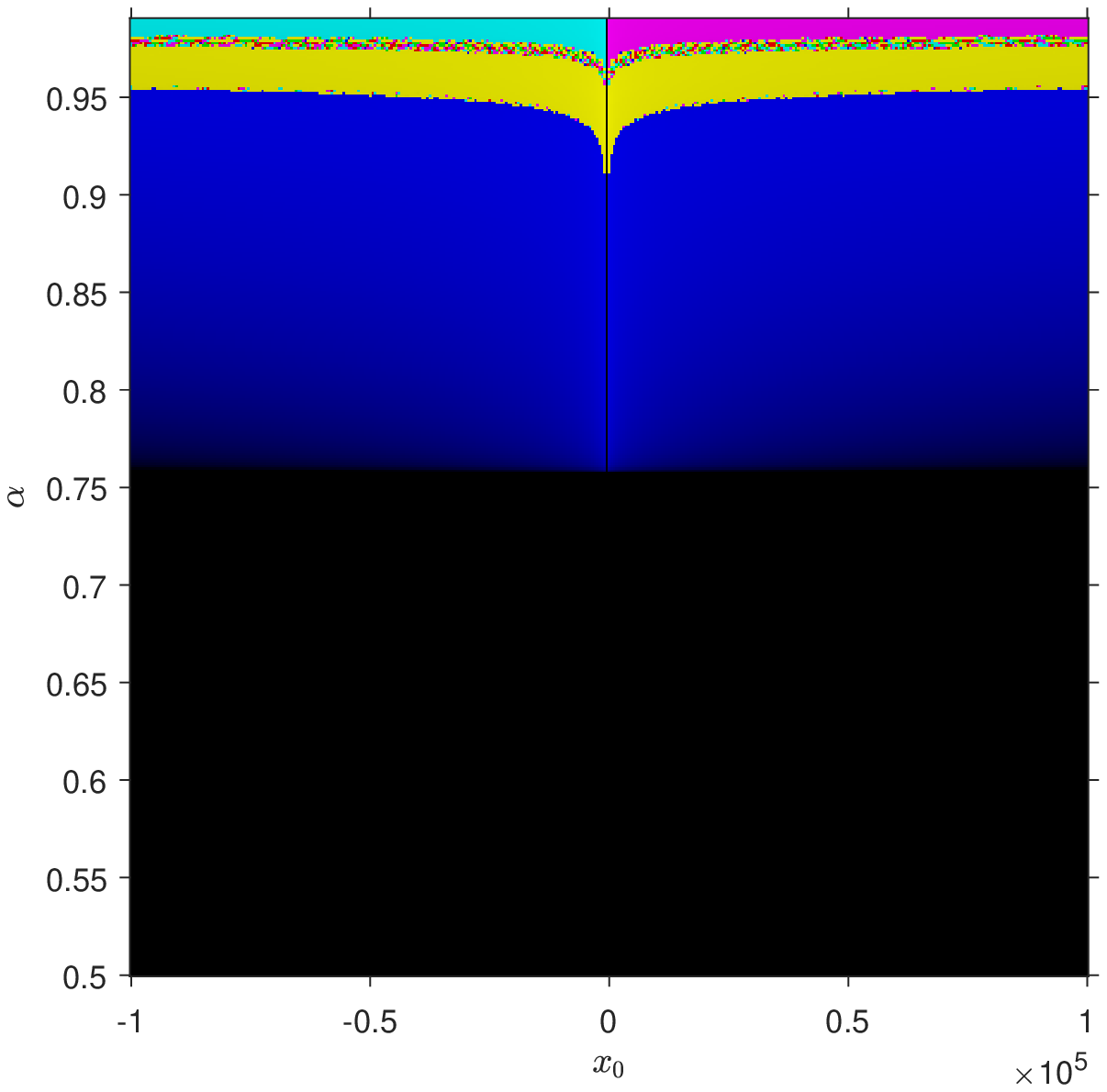}
(a) R-LFN$_1$, $-1e+05i\leq x_0\leq1e+05i$, 47.54\% convergence
\end{center}
\end{minipage}
\begin{minipage}[c]{0.5\textwidth}
\begin{center}
\includegraphics[width=\textwidth]{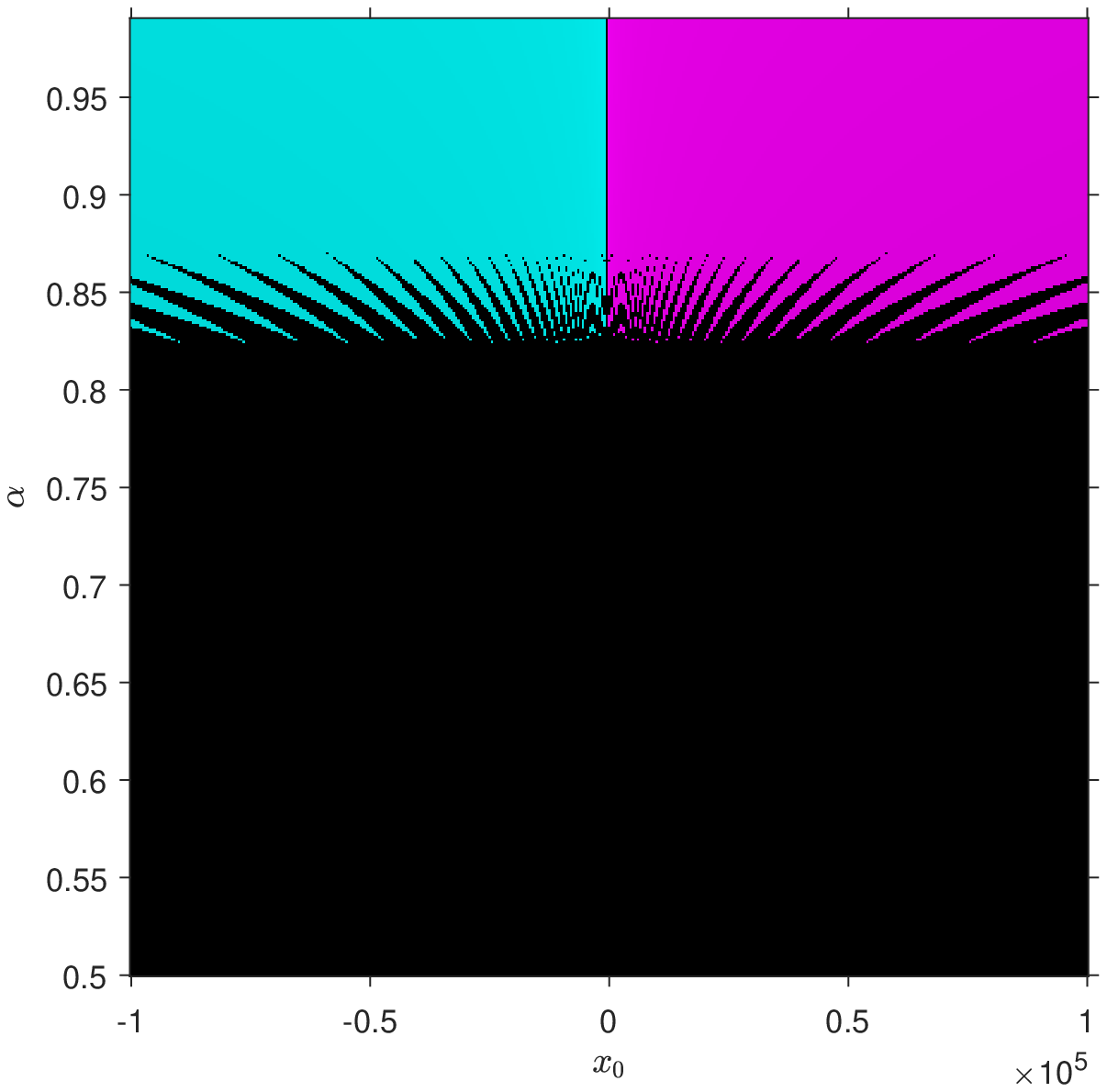}
(b) R-LFN$_2$, $-1e+05i\leq x_0\leq1e+05i$, 29.84\% convergence
\end{center}
\end{minipage}
\captionof{figure}{Convergence planes of R-LFN$_1$ and R-LFN$_2$ on $f_1(x)$ with $x_0$ imaginary}\label{f4}
\vspace{20pt}
Now, let us analyze the case of Traub and its first step. It can be observed in figures \ref{f5} and \ref{f6} that Traub methods have a higher percentage of convergence than their first steps. \\
\begin{minipage}[c]{0.5\textwidth}
\begin{center}
\includegraphics[width=\textwidth]{recursos/c_c_1}
(a) CFN$_2$, $-3\leq x_0\leq3$, 27.15\% convergence
\end{center}
\end{minipage}
\begin{minipage}[c]{0.5\textwidth}
\begin{center}
\includegraphics[width=\textwidth]{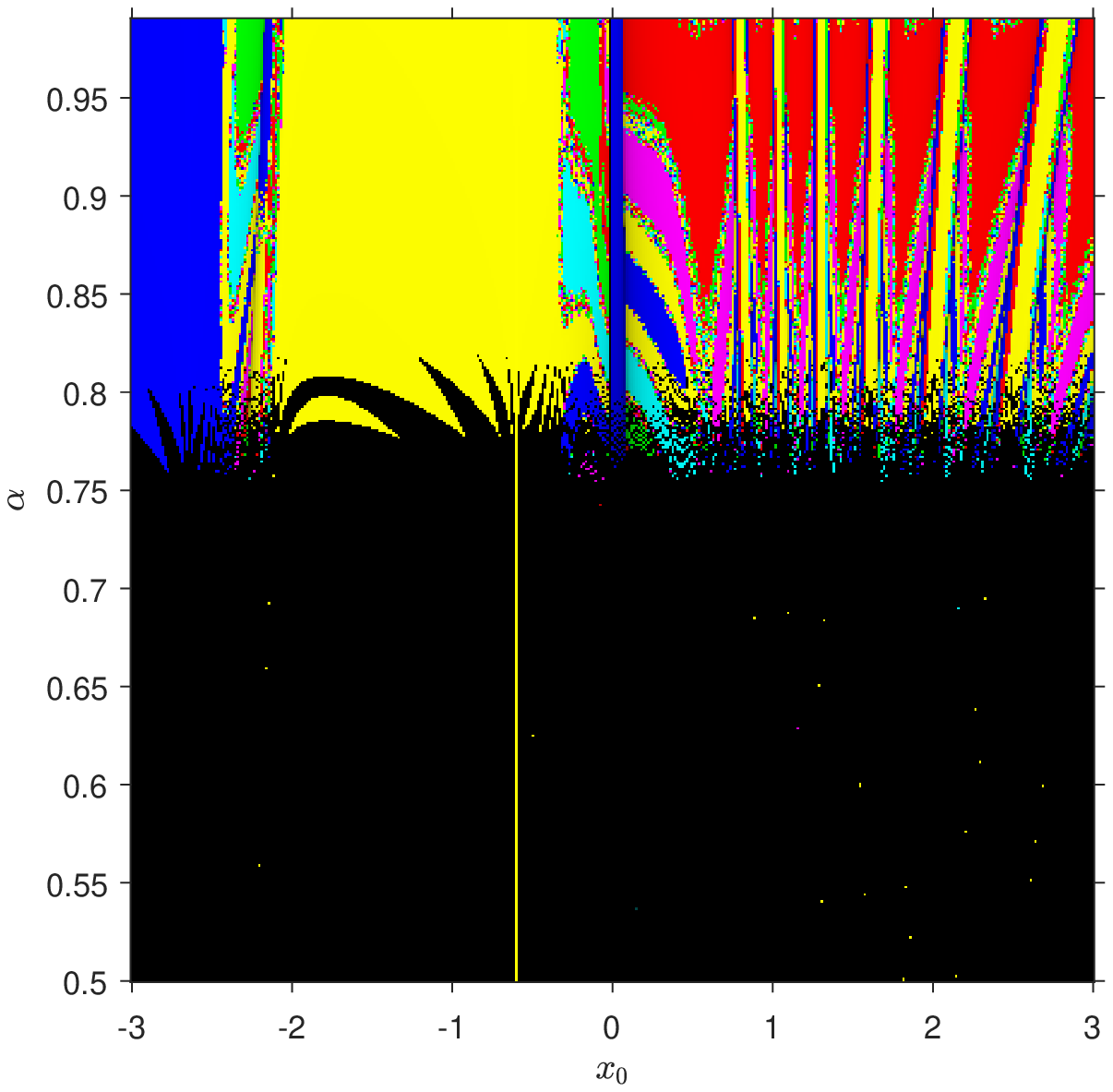}
(b) CFT, $-3\leq x_0\leq3$, 41.5\% convergence
\end{center}
\end{minipage}
\captionof{figure}{Convergence planes of CFN$_2$ and CFT on $f_1(x)$ with $x_0$ real}\label{f5}
\begin{minipage}[c]{0.5\textwidth}
\begin{center}
\includegraphics[width=\textwidth]{recursos/rl_c_1}
(a) R-LFN$_2$, $-3\leq x_0\leq3$, 28.85\% convergence
\end{center}
\end{minipage}
\begin{minipage}[c]{0.5\textwidth}
\begin{center}
\includegraphics[width=\textwidth]{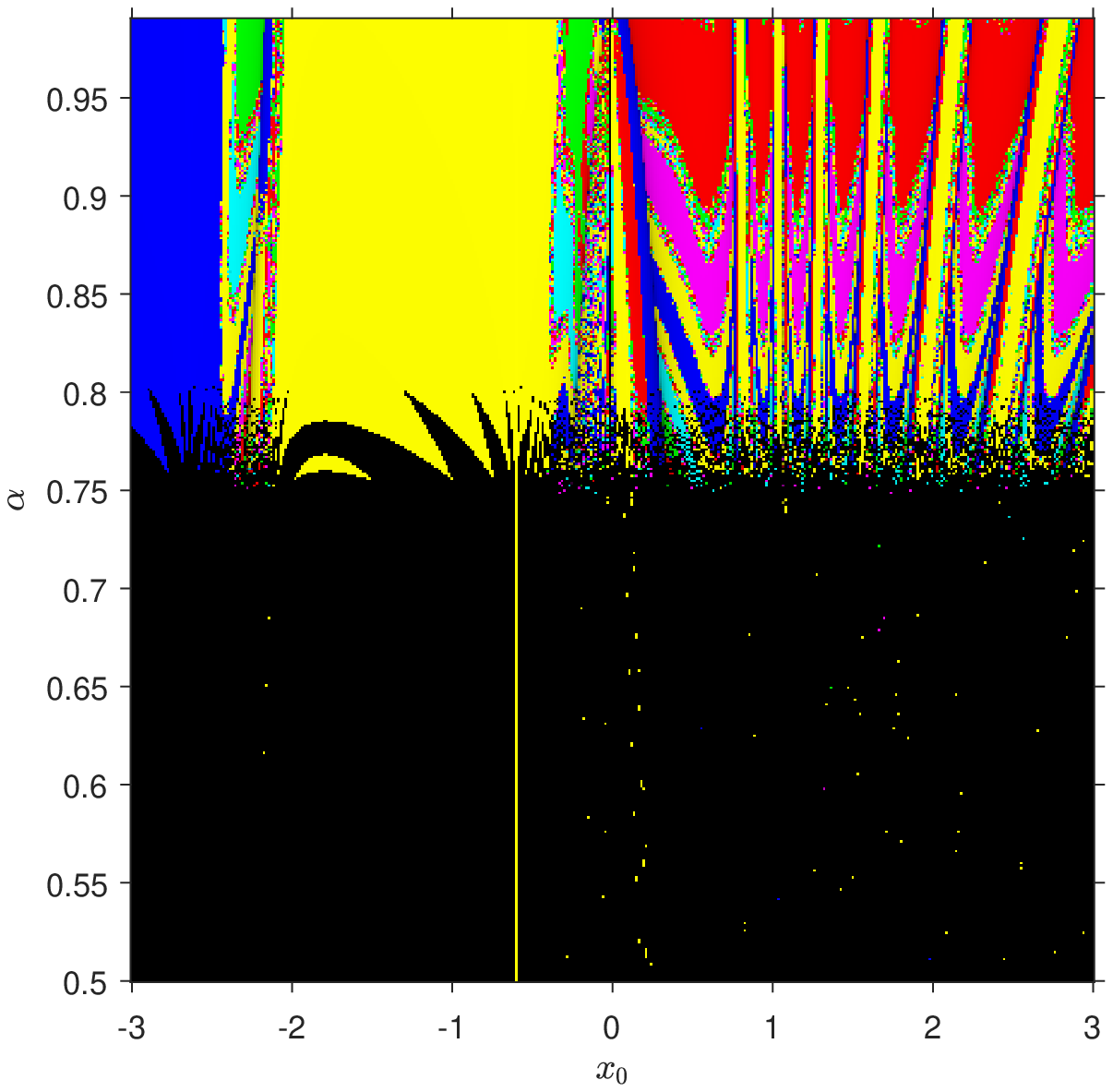}
(b) R-LFT, $-3\leq x_0\leq3$, 44\% convergence
\end{center}
\end{minipage}
\captionof{figure}{Convergence planes of R-LFN$_2$ and R-LFT on $f_1(x)$ with $x_0$ real}\label{f6}
\vspace{20pt}
Let us regard $f_2(x)$. In figures \ref{f7}-\ref{f12} we can observe exactly the same behavior for $f_2(x)$ as in figures \ref{f1}-\ref{f6} for $f_1(x)$. \\
\begin{minipage}[c]{0.5\textwidth}
\begin{center}
\includegraphics[width=\textwidth]{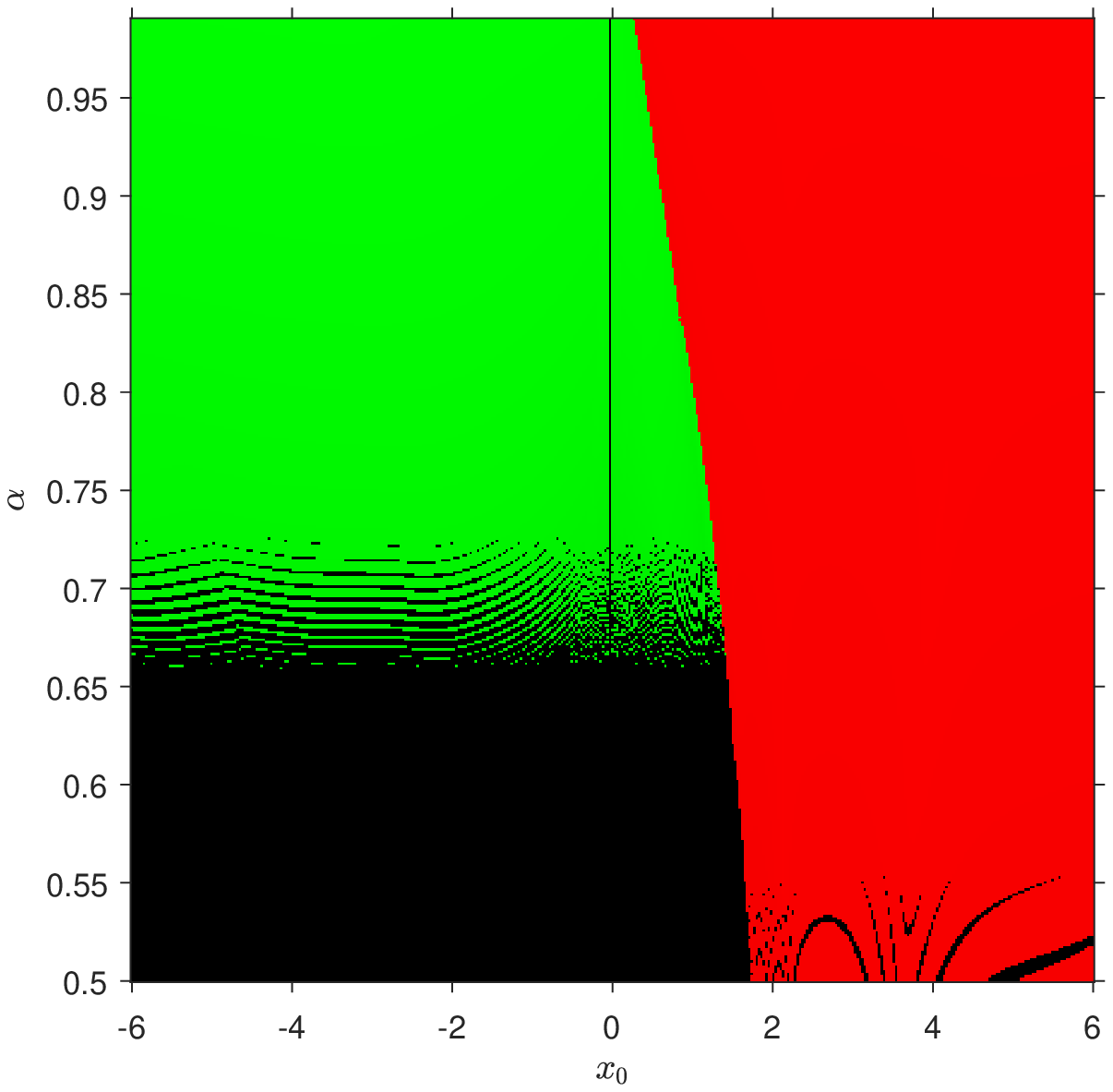}
(a) CFN$_1$, $-6\leq x_0\leq6$, 75.5\% convergence
\end{center}
\end{minipage}
\begin{minipage}[c]{0.5\textwidth}
\begin{center}
\includegraphics[width=\textwidth]{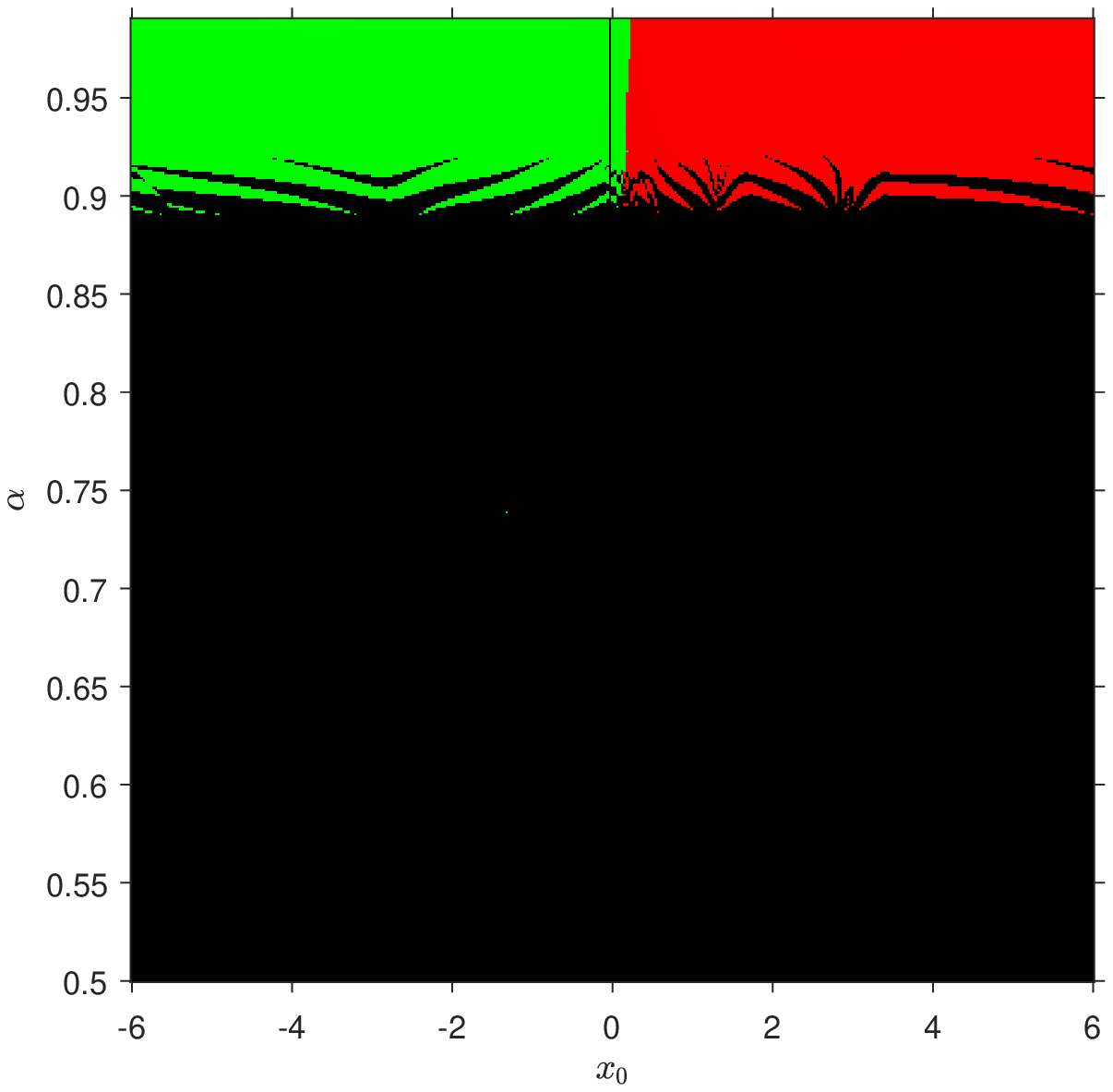}
(b) CFN$_2$, $-6\leq x_0\leq6$, 18.82\% convergence
\end{center}
\end{minipage}
\captionof{figure}{Convergence planes of CFN$_1$ and CFN$_2$ on $f_2(x)$ with $x_0$ real}\label{f7}
\begin{minipage}[c]{0.5\textwidth}
\begin{center}
\includegraphics[width=\textwidth]{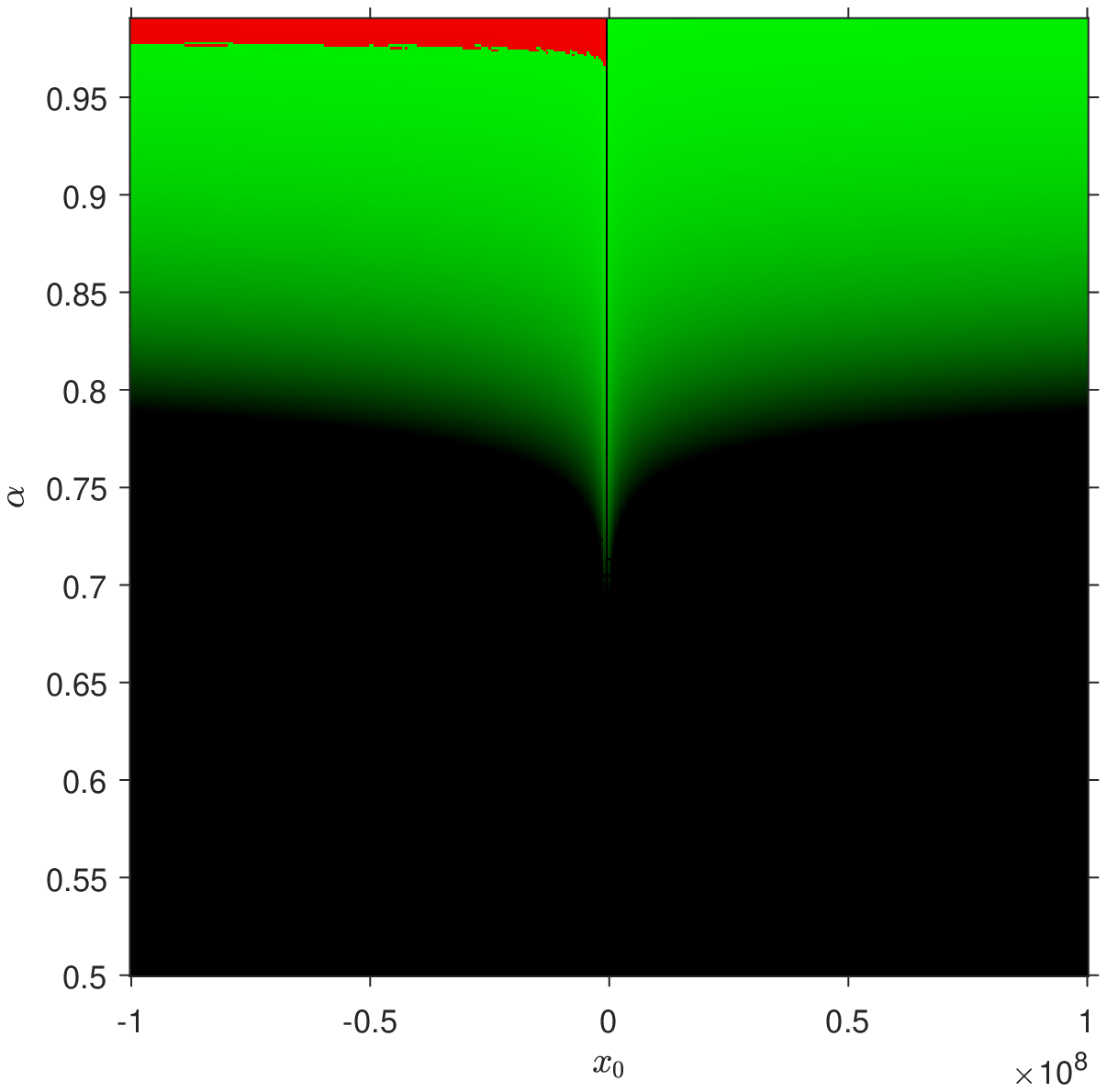}
(a) CFN$_1$, $-1e+08i\leq x_0\leq1e+08i$, 44.07\% convergence
\end{center}
\end{minipage}
\begin{minipage}[c]{0.5\textwidth}
\begin{center}
\includegraphics[width=\textwidth]{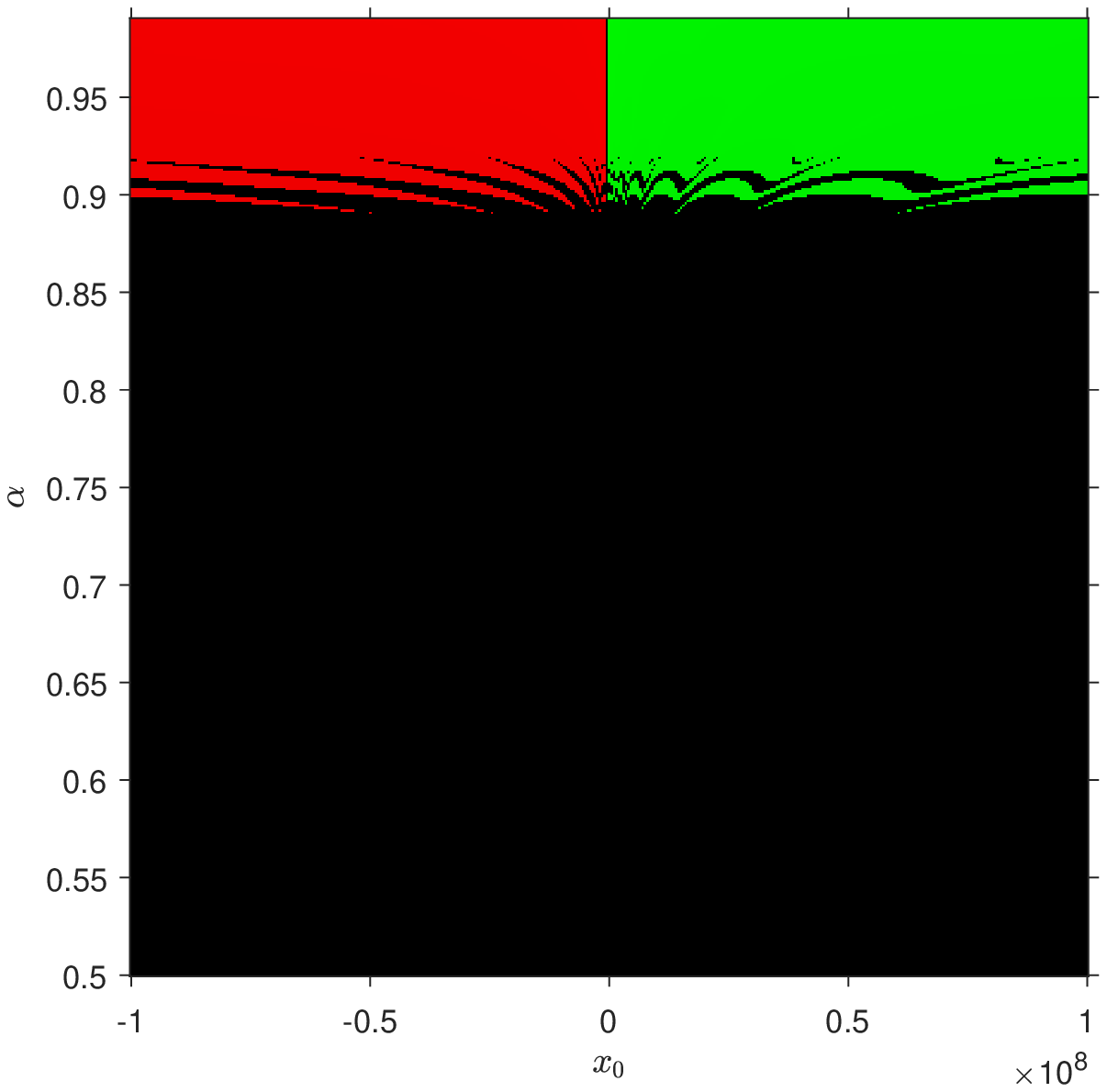}
(b) CFN$_2$, $-1e+08i\leq x_0\leq1e+08i$, 17.81\% convergence
\end{center}
\end{minipage}
\captionof{figure}{Convergence planes of CFN$_1$ and CFN$_2$ on $f_2(x)$ with $x_0$ imaginary}\label{f8}
\begin{minipage}[c]{0.5\textwidth}
\begin{center}
\includegraphics[width=\textwidth]{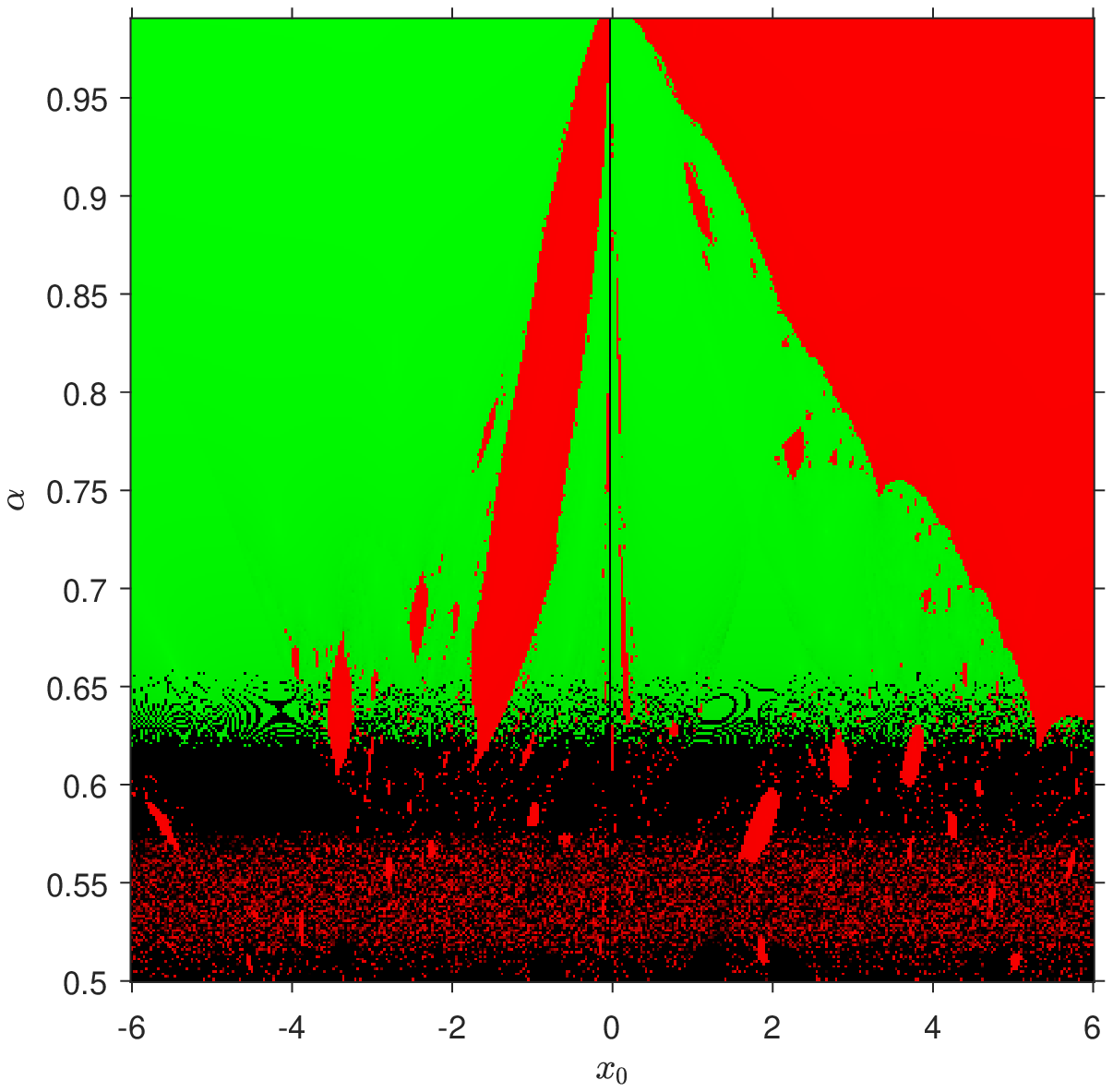}
(a) R-LFN$_1$, $-6\leq x_0\leq6$, 80.52\% convergence
\end{center}
\end{minipage}
\begin{minipage}[c]{0.5\textwidth}
\begin{center}
\includegraphics[width=\textwidth]{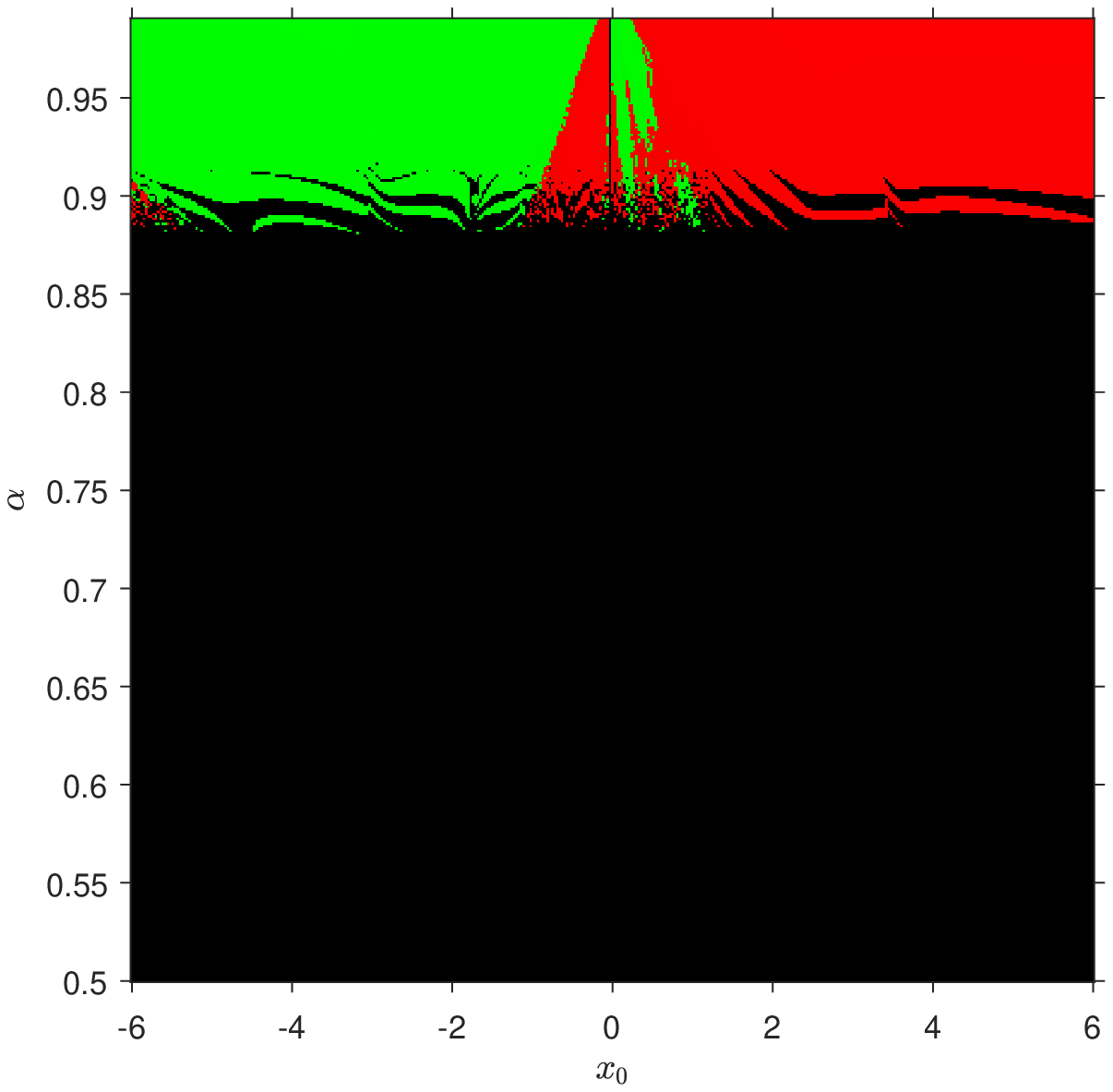}
(b) R-LFN$_2$, $-6\leq x_0\leq6$, 19.39\% convergence
\end{center}
\end{minipage}
\captionof{figure}{Convergence planes of R-LFN$_1$ and R-LFN$_2$ on $f_2(x)$ with $x_0$ real}\label{f9}
\begin{minipage}[c]{0.5\textwidth}
\begin{center}
\includegraphics[width=\textwidth]{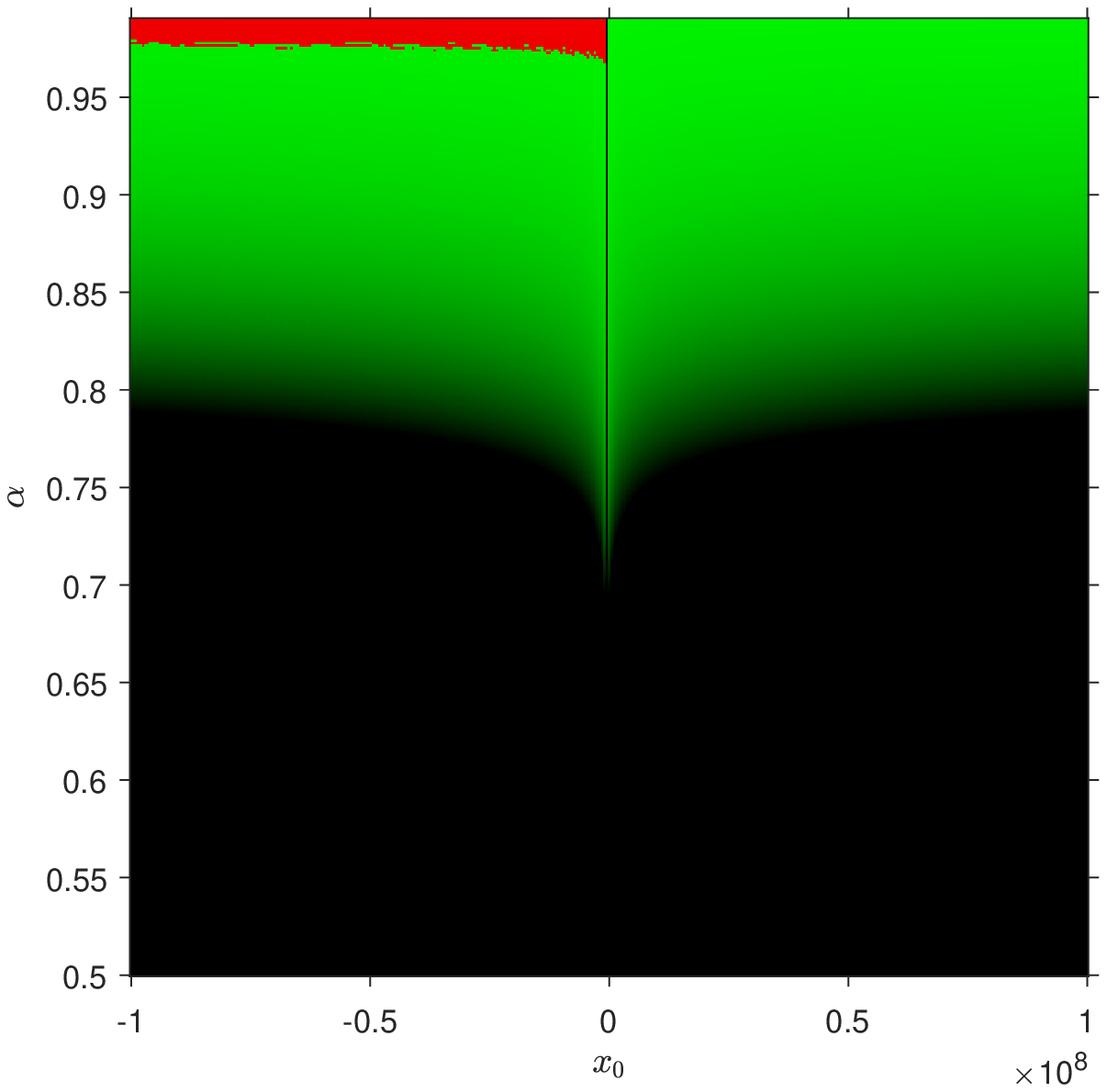}
(a) R-LFN$_1$, $-1e+08i\leq x_0\leq1e+08i$, 44.09\% convergence
\end{center}
\end{minipage}
\begin{minipage}[c]{0.5\textwidth}
\begin{center}
\includegraphics[width=\textwidth]{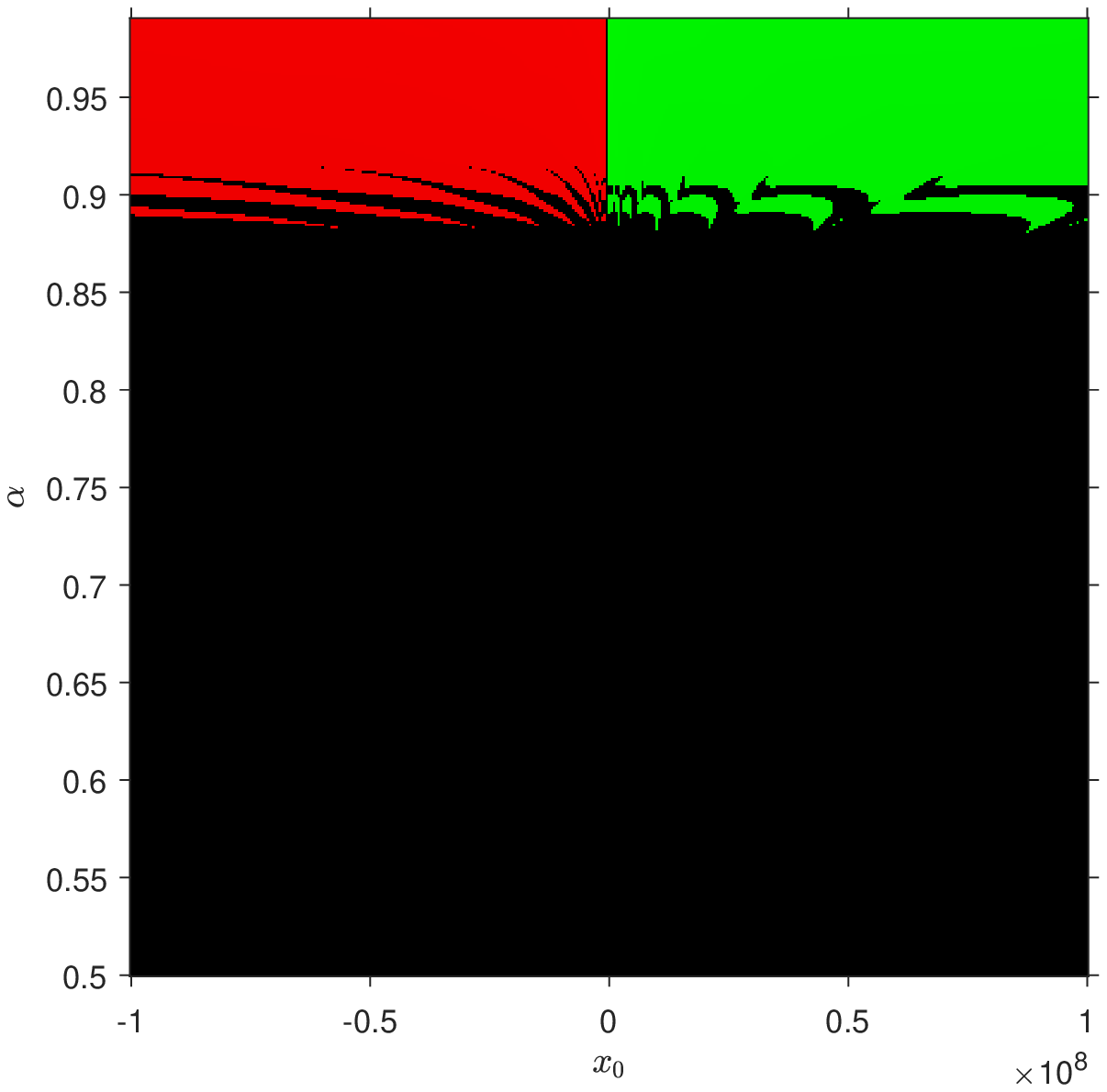}
(b) R-LFN$_2$, $-1e+08i\leq x_0\leq1e+08i$, 19.37\% convergence
\end{center}
\end{minipage}
\captionof{figure}{Convergence planes of R-LFN$_1$ and R-LFN$_2$ on $f_2(x)$ with $x_0$ imaginary}\label{f10}
\begin{minipage}[c]{0.5\textwidth}
\begin{center}
\includegraphics[width=\textwidth]{recursos/c_c_2}
(a) CFN$_2$, $-6\leq x_0\leq6$, 17.82\% convergence
\end{center}
\end{minipage}
\begin{minipage}[c]{0.5\textwidth}
\begin{center}
\includegraphics[width=\textwidth]{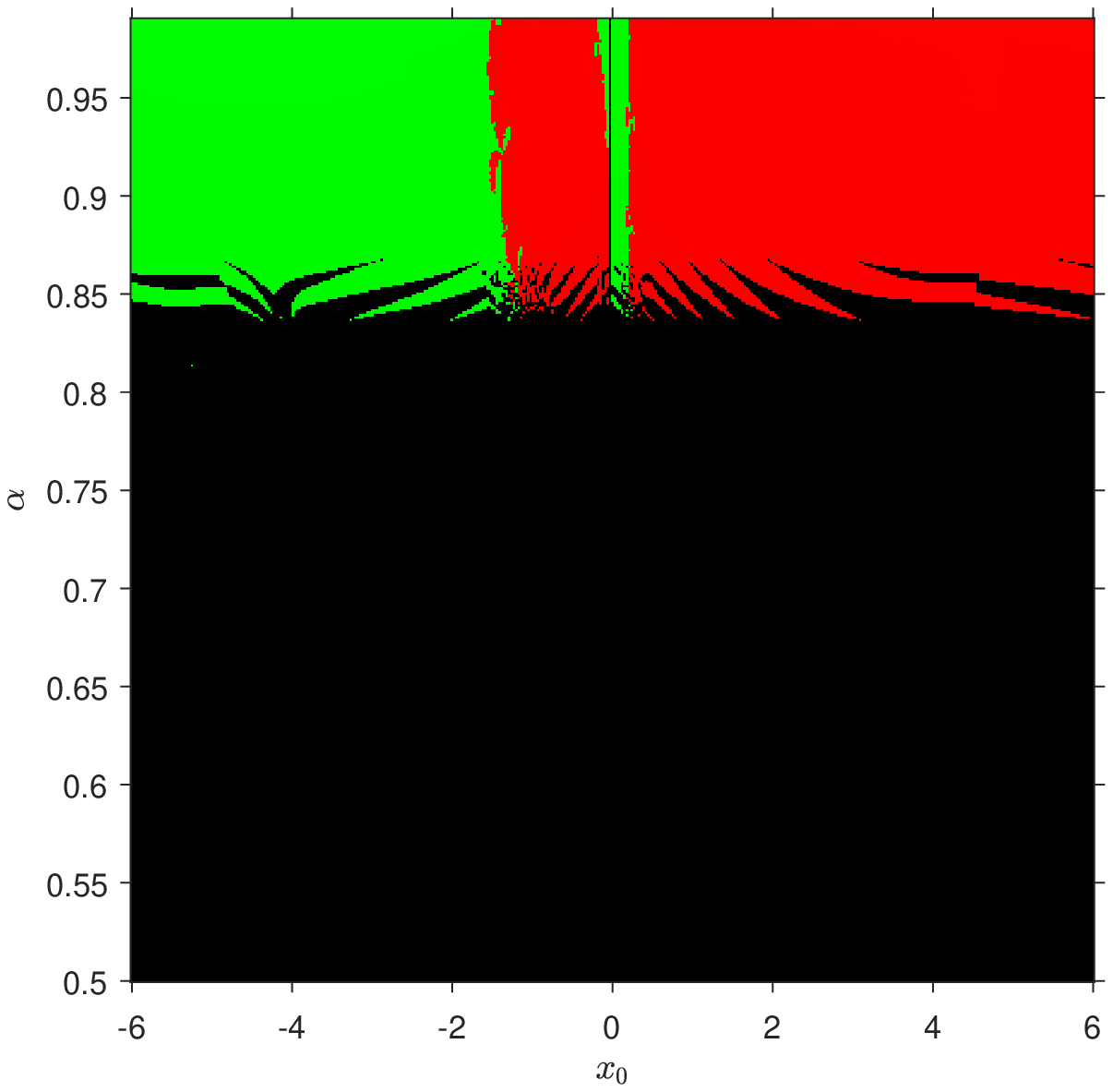}
(b) CFT, $-6\leq x_0\leq6$, 28.68\% convergence
\end{center}
\end{minipage}
\captionof{figure}{Convergence planes of CFN$_2$ and CFT on $f_2(x)$ with $x_0$ real}\label{f11}
\begin{minipage}[c]{0.5\textwidth}
\begin{center}
\includegraphics[width=\textwidth]{recursos/rl_c_2}
(a) R-LFN$_2$, $-6\leq x_0\leq6$, 19.39\% convergence
\end{center}
\end{minipage}
\begin{minipage}[c]{0.5\textwidth}
\begin{center}
\includegraphics[width=\textwidth]{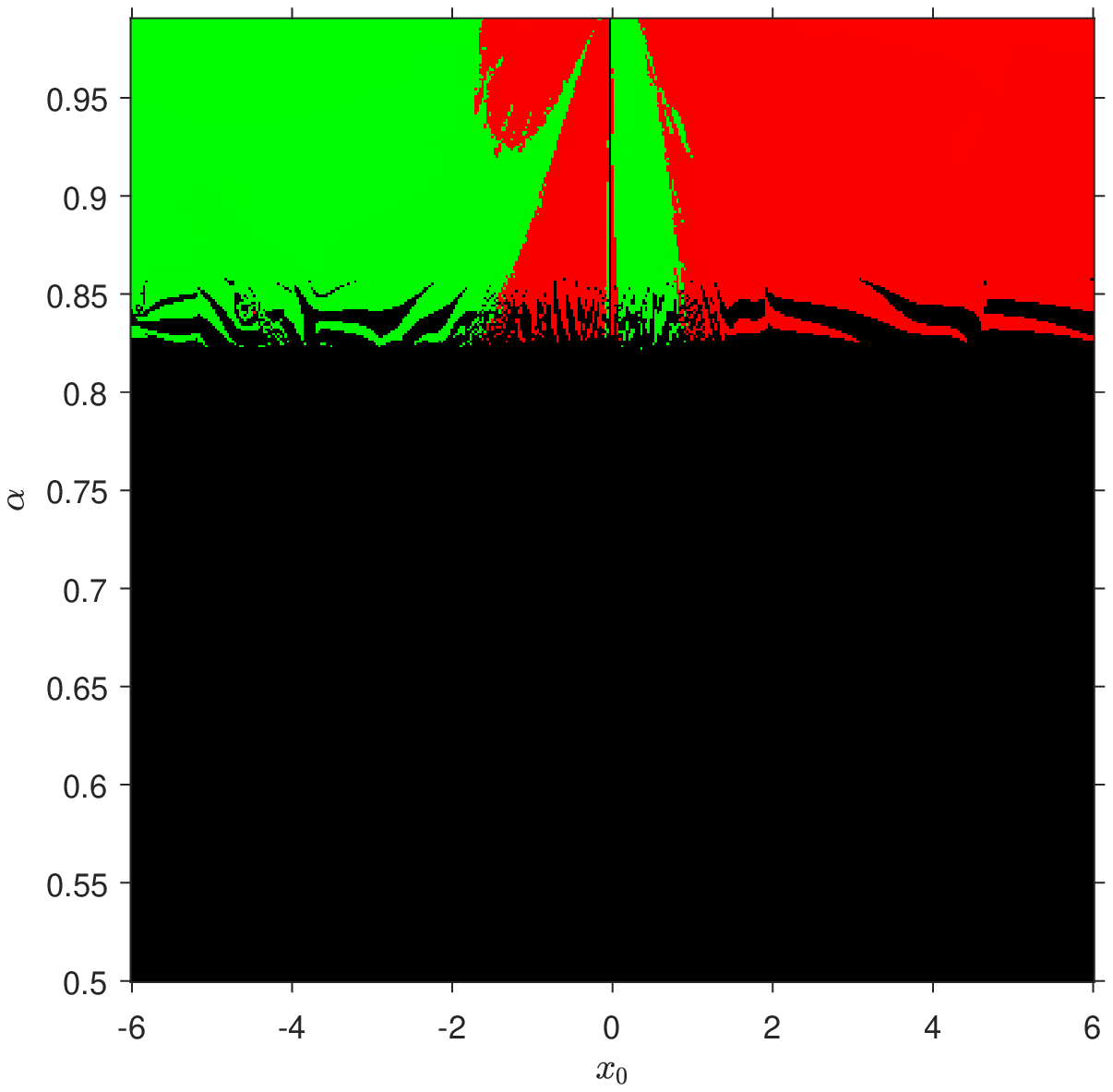}
(b) R-LFT, $-6\leq x_0\leq6$, 31.08\% convergence
\end{center}
\end{minipage}
\captionof{figure}{Convergence planes of R-LFN$_2$ and R-LFT on $f_2(x)$ with $x_0$ real}\label{f12}
\vspace{20pt}
Let us regard $f_3(x)$. For this function the results are very different to $f_1(x)$ and $f_2(x)$. In figure \ref{f13} we can see that CFN$_2$ improves considerably the percentage of convergence of CFN$_1$ for real values of initial estimations, while in figure \ref{f14} there is no convergence at all with both methods for imaginary values of initial estimations. In figure \ref{f15} R-LFN$_2$ slightly improves R-LFN$_1$, while in figure \ref{f16} the percentage convergence holds for both methods. \\
\begin{minipage}[c]{0.5\textwidth}
\begin{center}
\includegraphics[width=\textwidth]{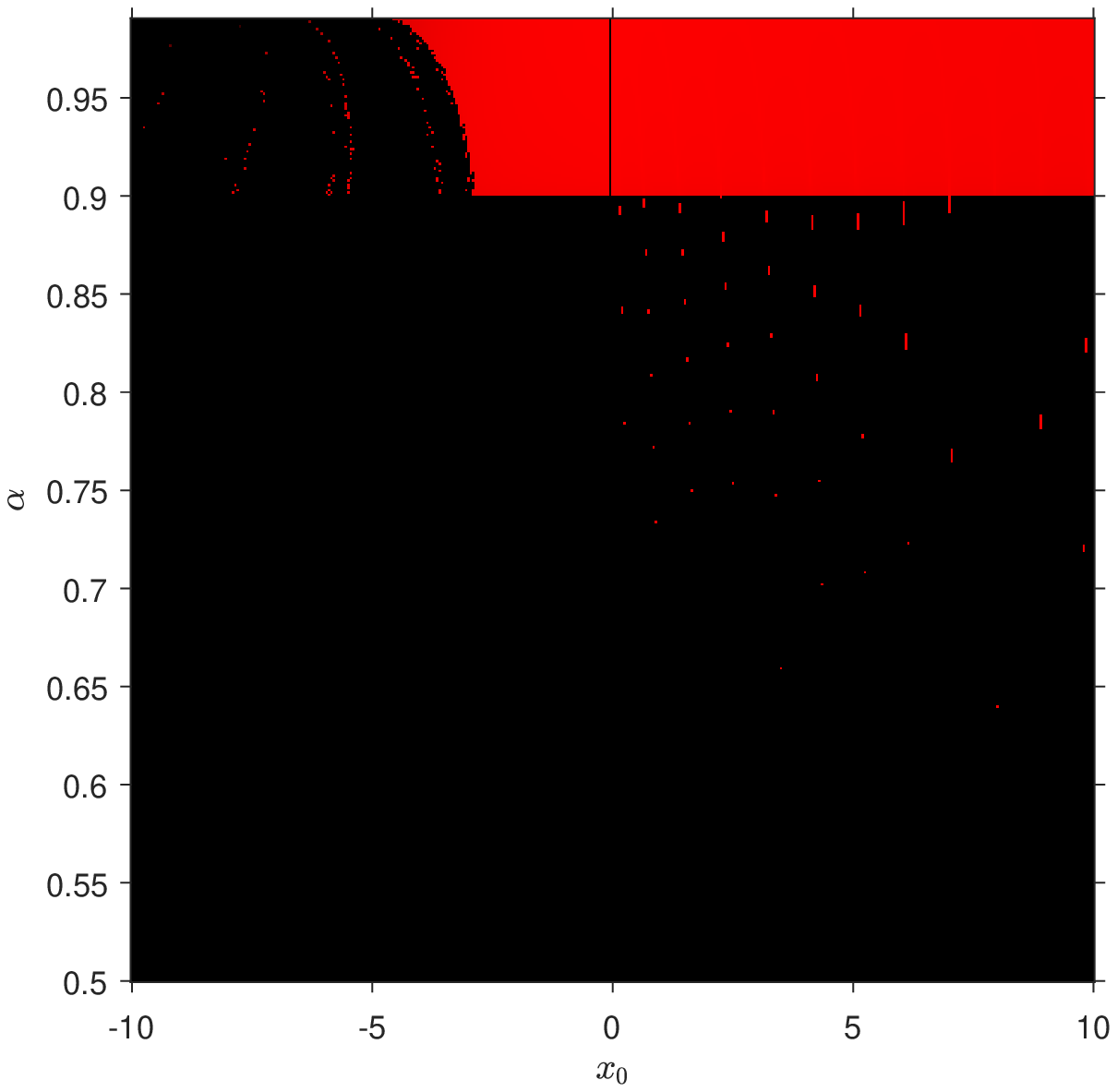}
(a) CFN$_1$, $-10\leq x_0\leq10$, 13.83\% convergence
\end{center}
\end{minipage}
\begin{minipage}[c]{0.5\textwidth}
\begin{center}
\includegraphics[width=\textwidth]{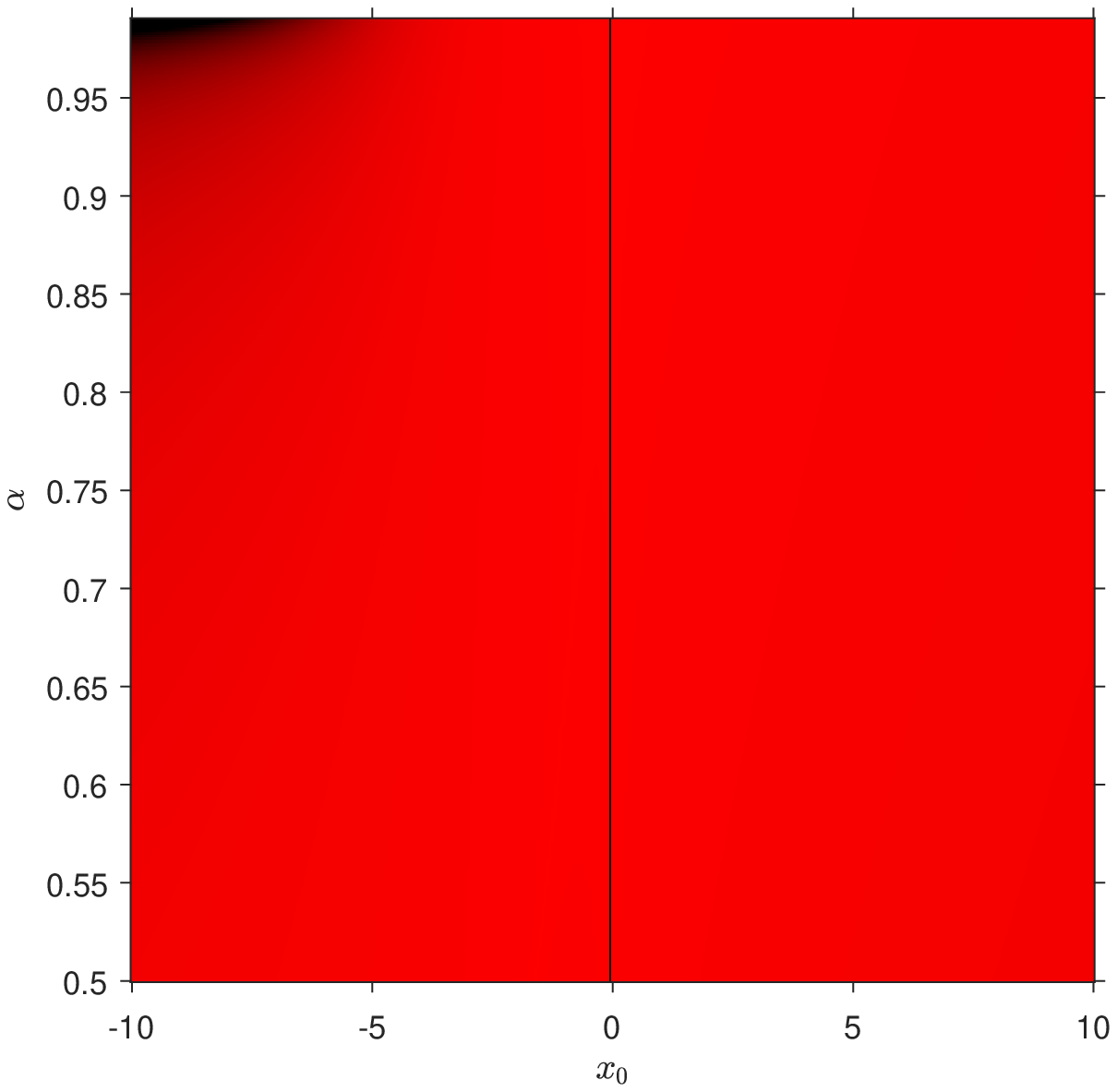}
(b) CFN$_2$, $-10\leq x_0\leq10$, 99.62\% convergence
\end{center}
\end{minipage}
\captionof{figure}{Convergence planes of CFN$_1$ and CFN$_2$ on $f_3(x)$ with $x_0$ real}\label{f13}
\begin{minipage}[c]{0.5\textwidth}
\begin{center}
\includegraphics[width=\textwidth]{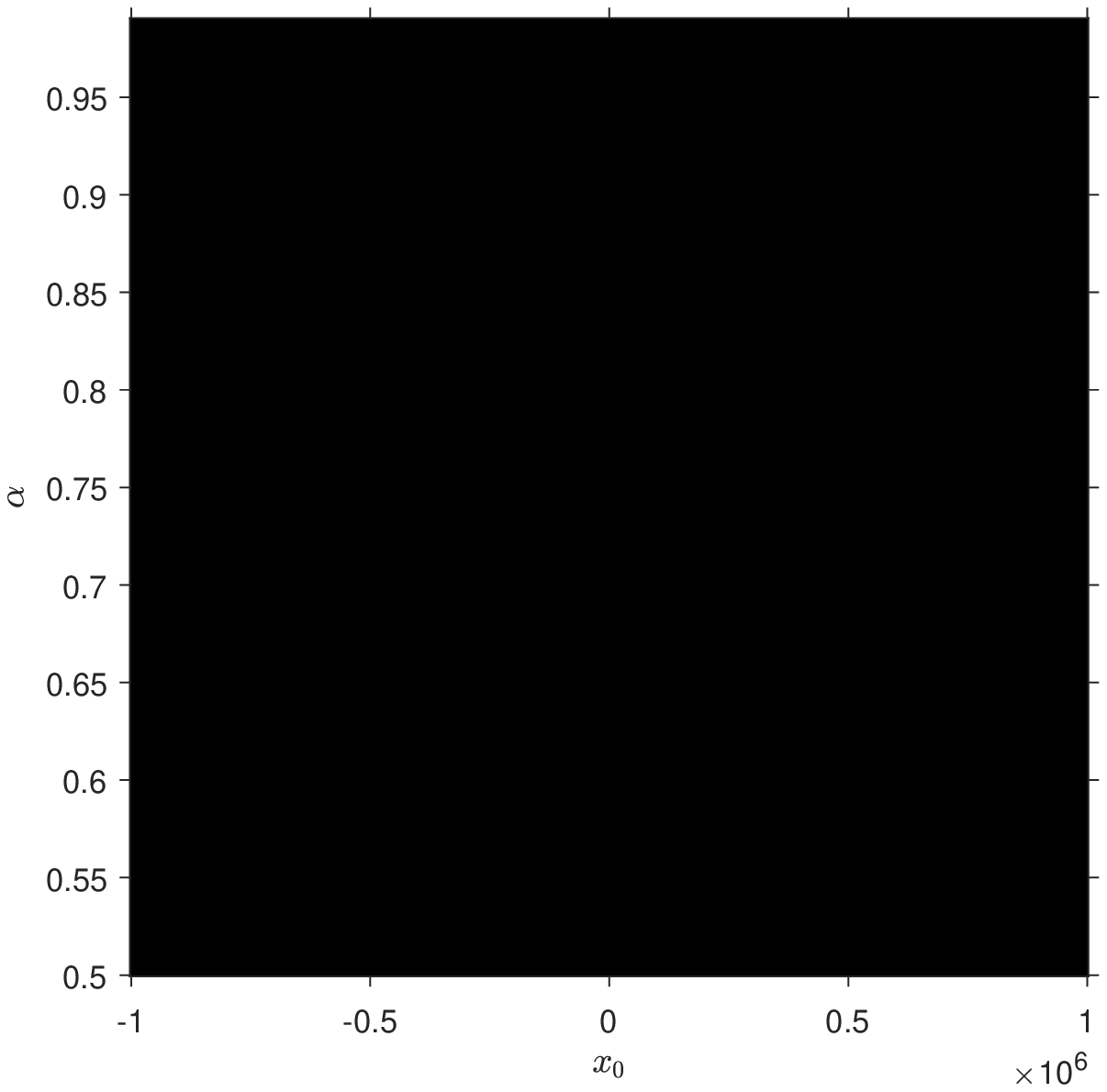}
(a) CFN$_1$, $-1e+06i\leq x_0\leq1e+06i$, 0\% convergence
\end{center}
\end{minipage}
\begin{minipage}[c]{0.5\textwidth}
\begin{center}
\includegraphics[width=\textwidth]{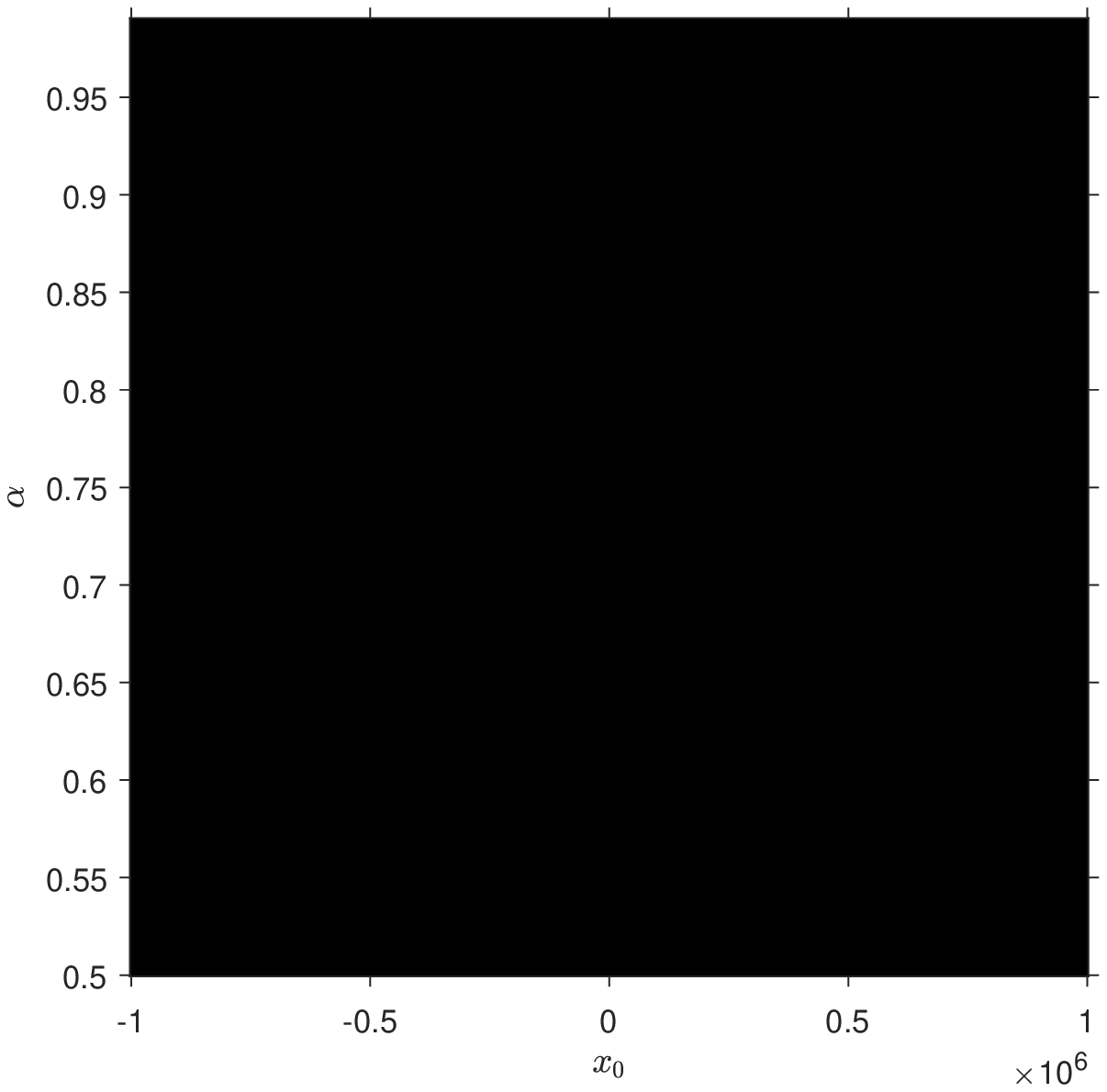}
(b) CFN$_2$, $-1e+06i\leq x_0\leq1e+06i$, 0\% convergence
\end{center}
\end{minipage}
\captionof{figure}{Convergence planes of CFN$_1$ and CFN$_2$ on $f_3(x)$ with $x_0$ imaginary}\label{f14}
\begin{minipage}[c]{0.5\textwidth}
\begin{center}
\includegraphics[width=\textwidth]{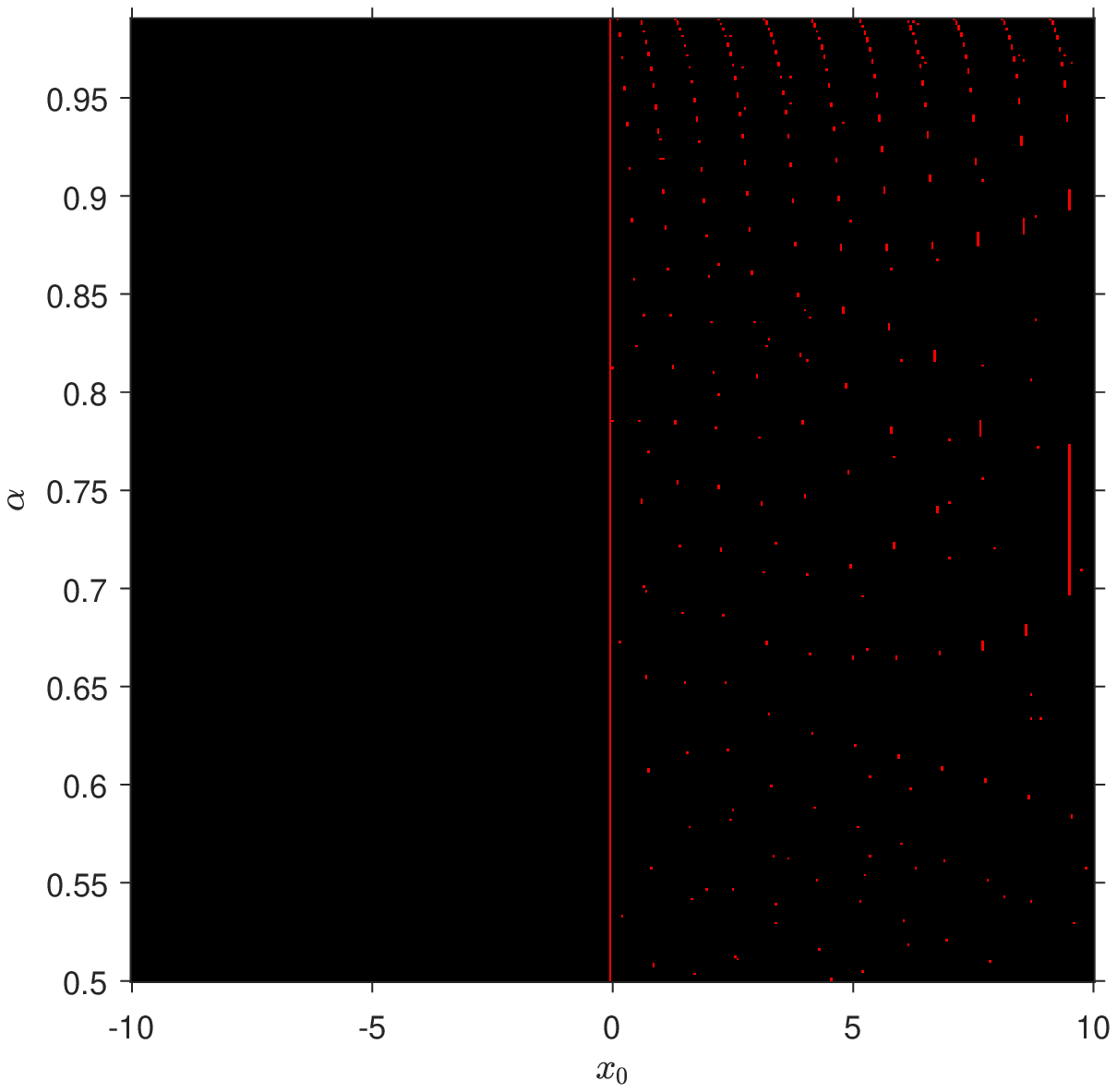}
(a) R-LFN$_1$, $-10\leq x_0\leq10$, 0.58\% convergence
\end{center}
\end{minipage}
\begin{minipage}[c]{0.5\textwidth}
\begin{center}
\includegraphics[width=\textwidth]{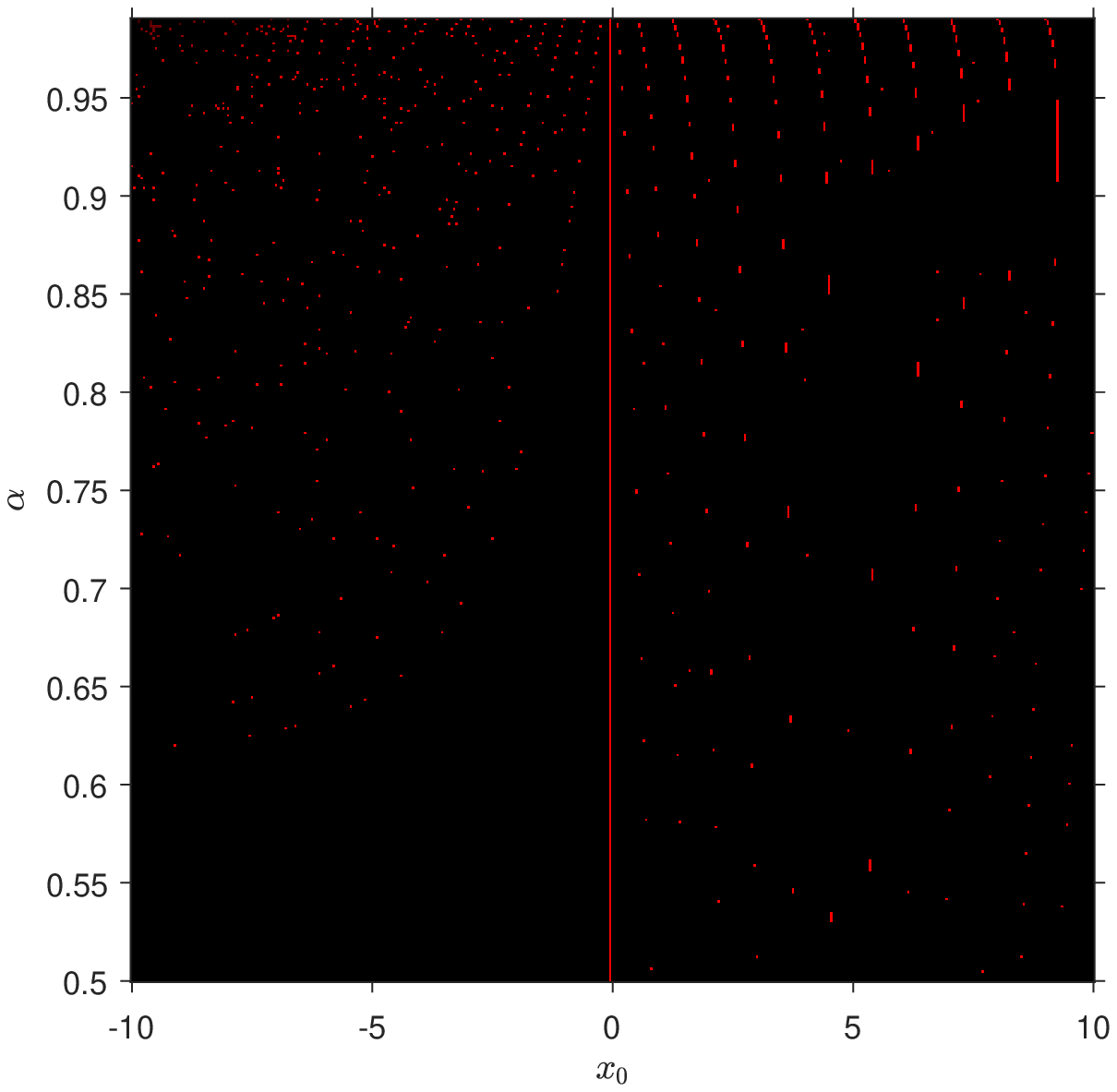}
(b) R-LFN$_2$, $-10\leq x_0\leq10$, 0.8\% convergence
\end{center}
\end{minipage}
\captionof{figure}{Convergence planes of R-LFN$_1$ and R-LFN$_2$ on $f_3(x)$ with $x_0$ real}\label{f15}
\begin{minipage}[c]{0.5\textwidth}
\begin{center}
\includegraphics[width=\textwidth]{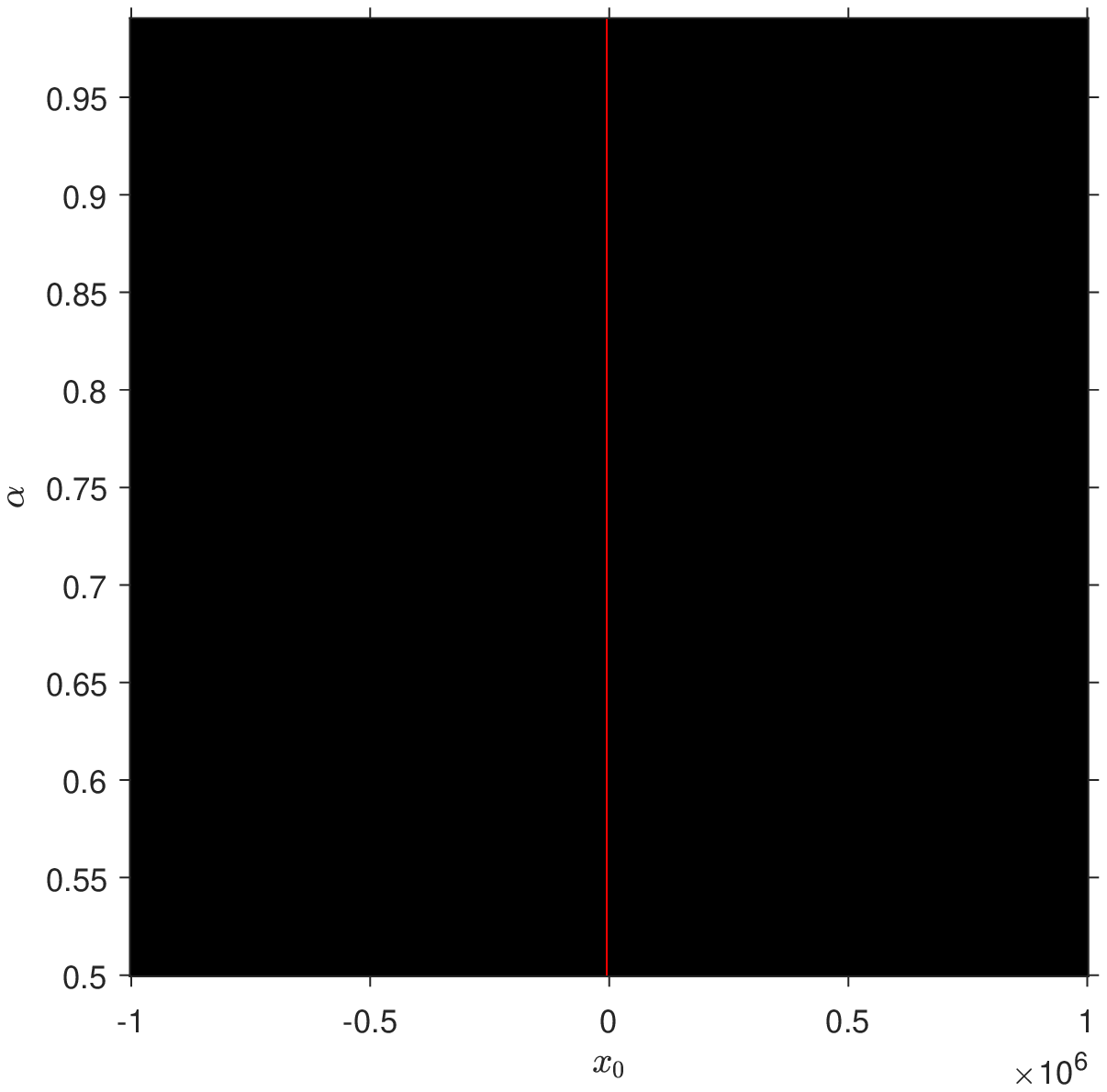}
(a) R-LFN$_1$, $-1e+06i\leq x_0\leq1e+06i$, 0.25\% convergence
\end{center}
\end{minipage}
\begin{minipage}[c]{0.5\textwidth}
\begin{center}
\includegraphics[width=\textwidth]{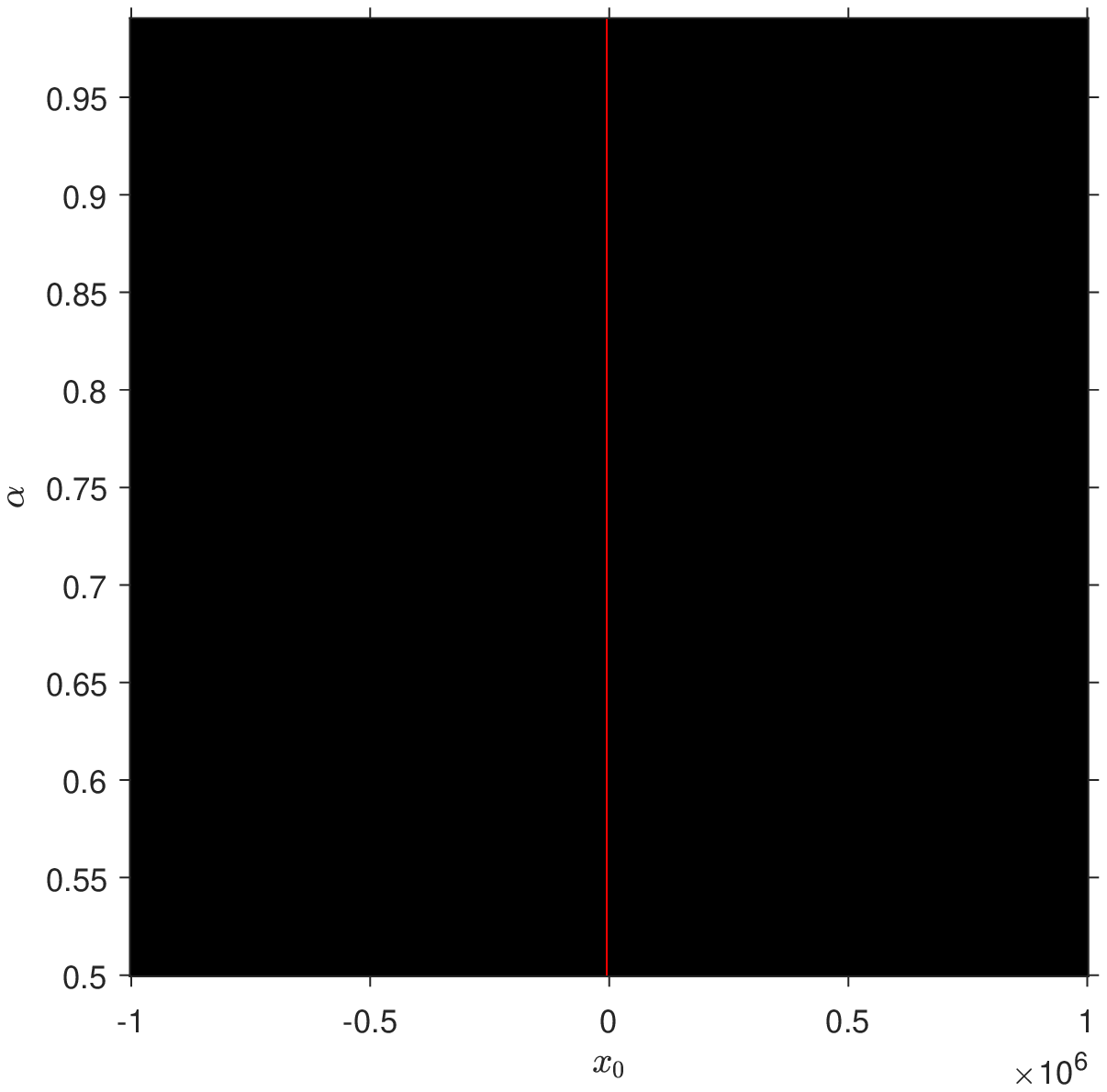}
(b) R-LFN$_2$, $-1e+06i\leq x_0\leq1e+06i$, 0.25\% convergence
\end{center}
\end{minipage}
\captionof{figure}{Convergence planes of R-LFN$_1$ and R-LFN$_2$ on $f_3(x)$ with $x_0$ imaginary}\label{f16}
\vspace{20pt}
In the case of Traub, it can be observed in figure \ref{f17} that Traub method does not improve the percentage convergence of its first step with Caputo derivative, while in figure \ref{f18} we can observe that Traub improves the percentage convergence of its first step with Riemann-Liouville derivative as in $f_1(x)$ and $f_2(x)$. \\
\begin{minipage}[c]{0.5\textwidth}
\begin{center}
\includegraphics[width=\textwidth]{recursos/c_c_3}
(a) CFN$_2$, $-10\leq x_0\leq10$, 99.62\% convergence
\end{center}
\end{minipage}
\begin{minipage}[c]{0.5\textwidth}
\begin{center}
\includegraphics[width=\textwidth]{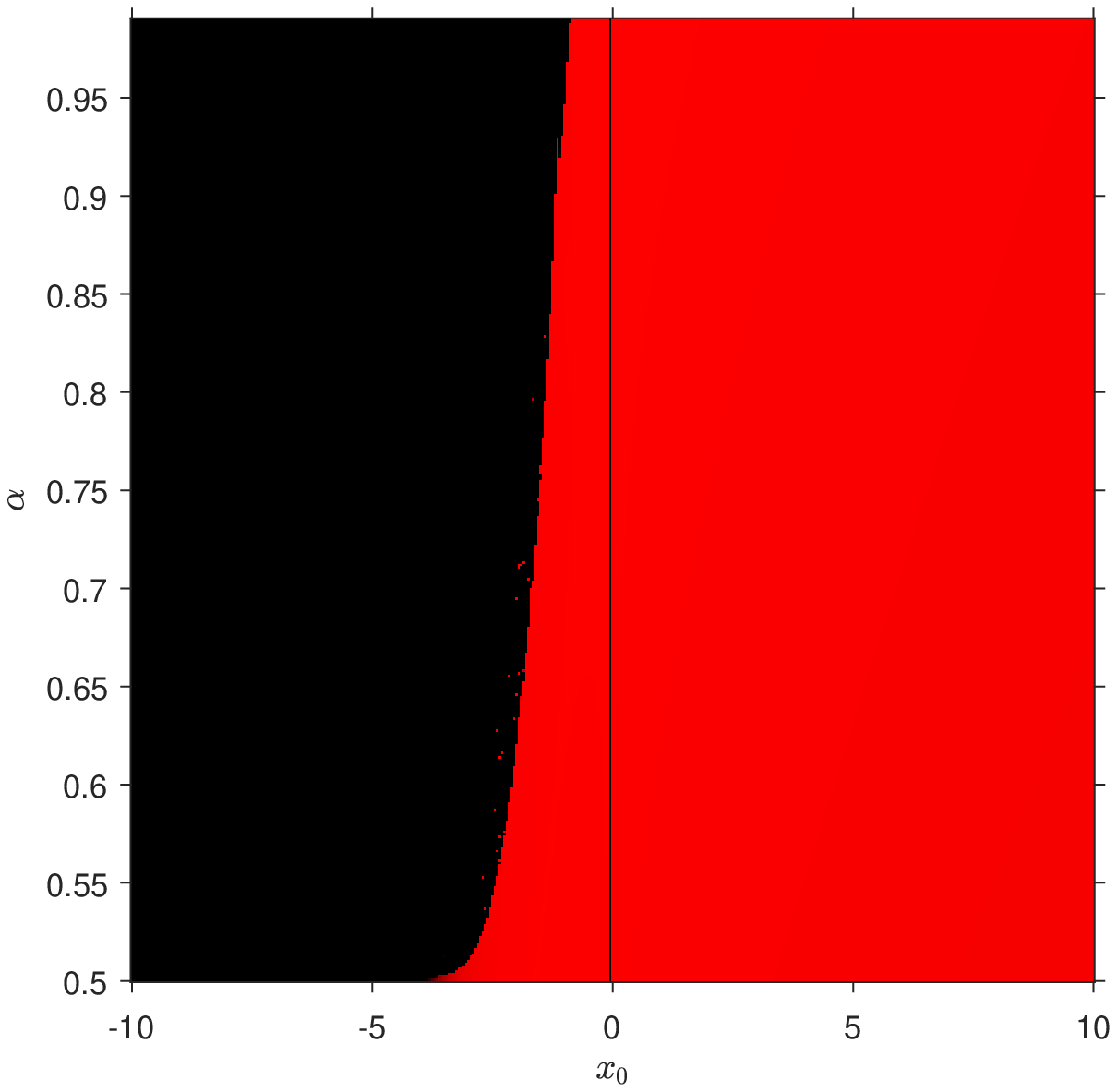}
(b) CFT, $-10\leq x_0\leq10$, 57.78\% convergence
\end{center}
\end{minipage}
\captionof{figure}{Convergence planes of CFN$_2$ and CFT on $f_3(x)$ with $x_0$ real}\label{f17}
\begin{minipage}[c]{0.5\textwidth}
\begin{center}
\includegraphics[width=\textwidth]{recursos/rl_c_3}
(a) R-LFN$_2$, $-10\leq x_0\leq10$, 0.8\% convergence
\end{center}
\end{minipage}
\begin{minipage}[c]{0.5\textwidth}
\begin{center}
\includegraphics[width=\textwidth]{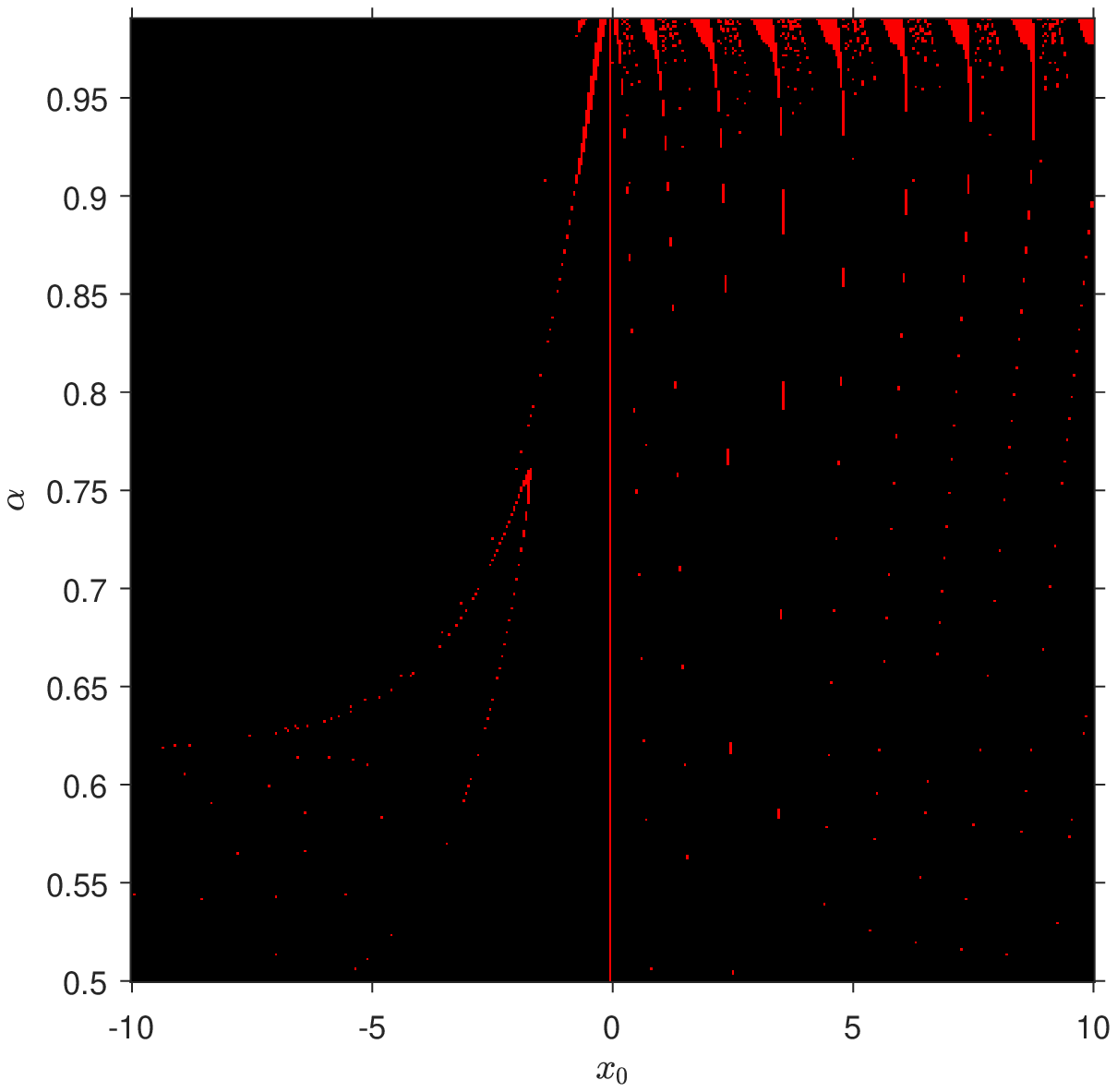}
(b) R-LFT, $-10\leq x_0\leq10$, 1.47\% convergence
\end{center}
\end{minipage}
\captionof{figure}{Convergence planes of R-LFN$_2$ and R-LFT on $f_3(x)$ with $x_0$ real}\label{f18}
\vspace{20pt}
In figures \ref{f19}-\ref{f24} we can see the behavior for $f_4(x)$. \\
\begin{minipage}[c]{0.5\textwidth}
\begin{center}
\includegraphics[width=\textwidth]{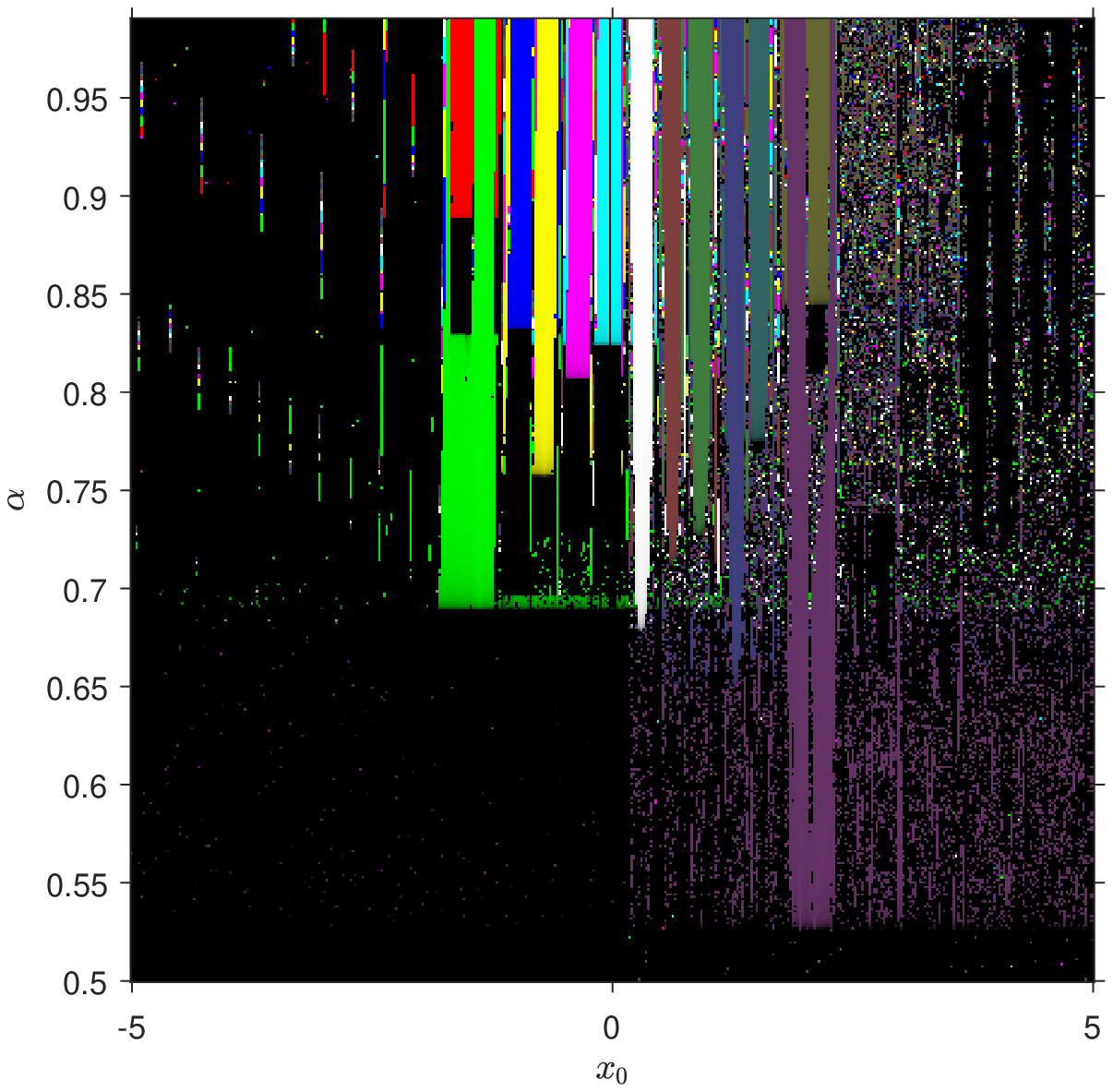}
(a) CFN$_1$, $-5\leq x_0\leq5$, 30.39\% convergence
\end{center}
\end{minipage}
\begin{minipage}[c]{0.5\textwidth}
\begin{center}
\includegraphics[width=\textwidth]{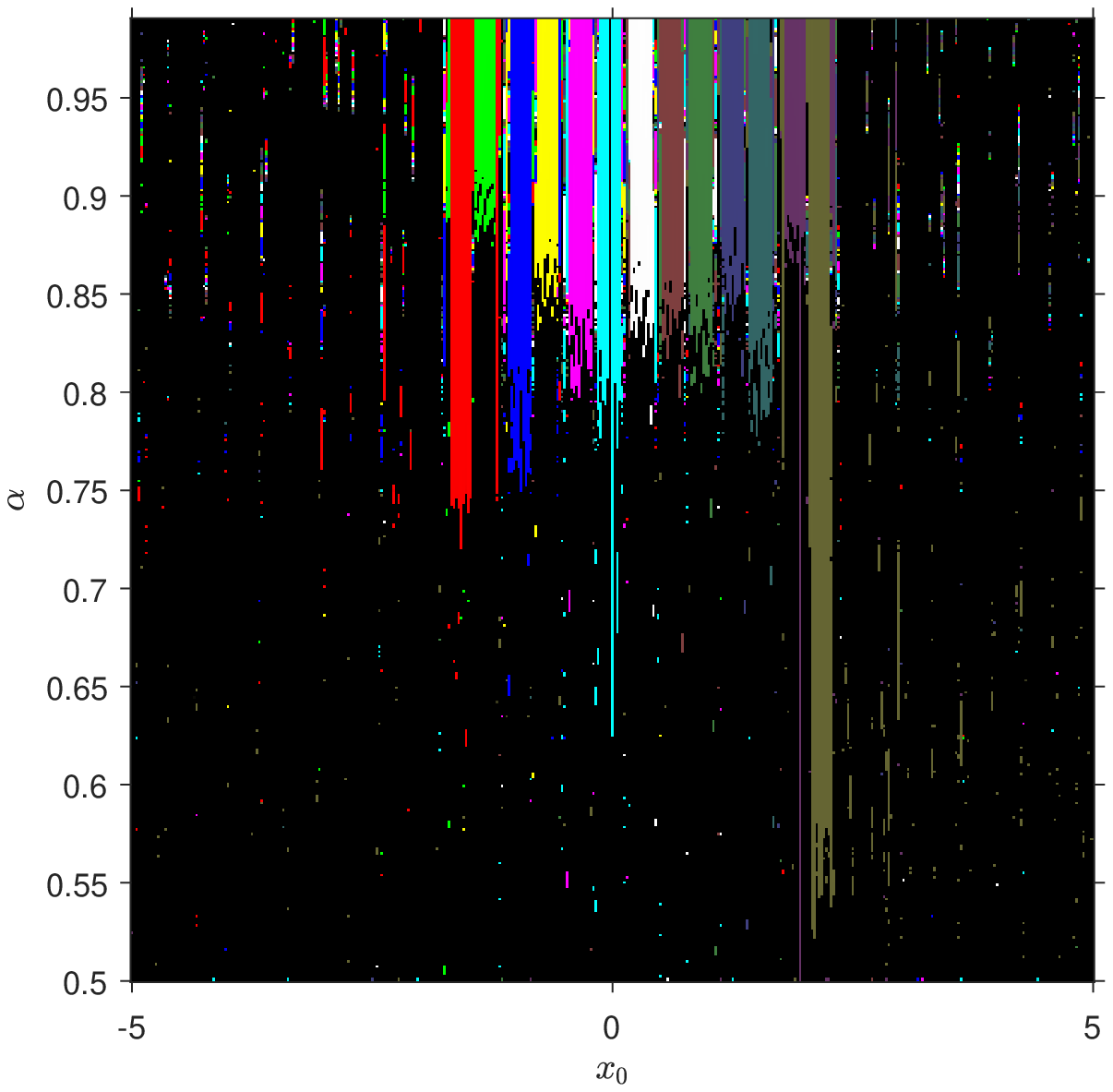}
(b) CFN$_2$, $-5\leq x_0\leq5$, 16.46\% convergence
\end{center}
\end{minipage}
\captionof{figure}{Convergence planes of CFN$_1$ and CFN$_2$ on $f_4(x)$ with $x_0$ real}\label{f19}
\begin{minipage}[c]{0.5\textwidth}
\begin{center}
\includegraphics[width=\textwidth]{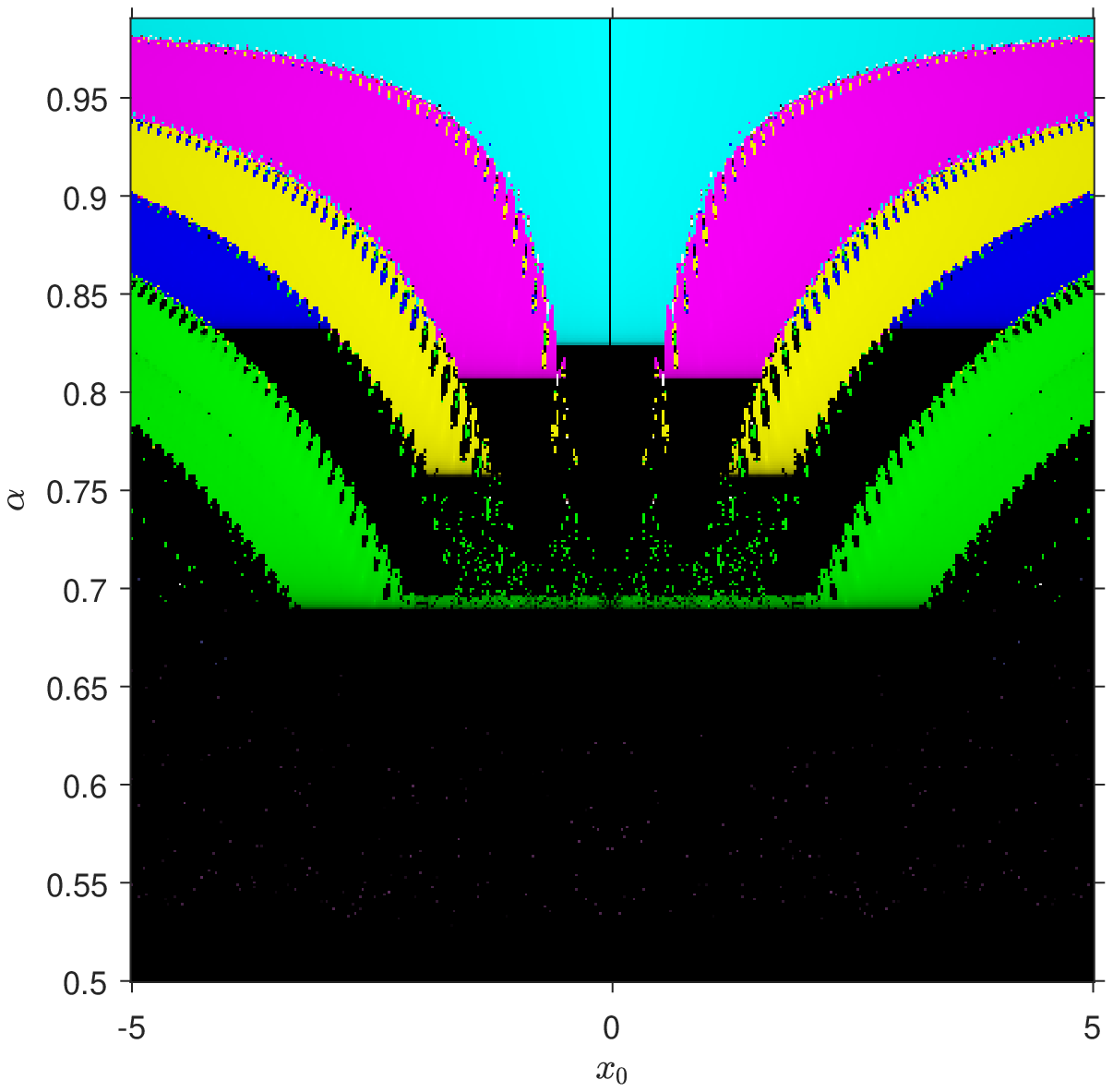}
(a) CFN$_1$, $-5i\leq x_0\leq5i$, 45.2\% convergence
\end{center}
\end{minipage}
\begin{minipage}[c]{0.5\textwidth}
\begin{center}
\includegraphics[width=\textwidth]{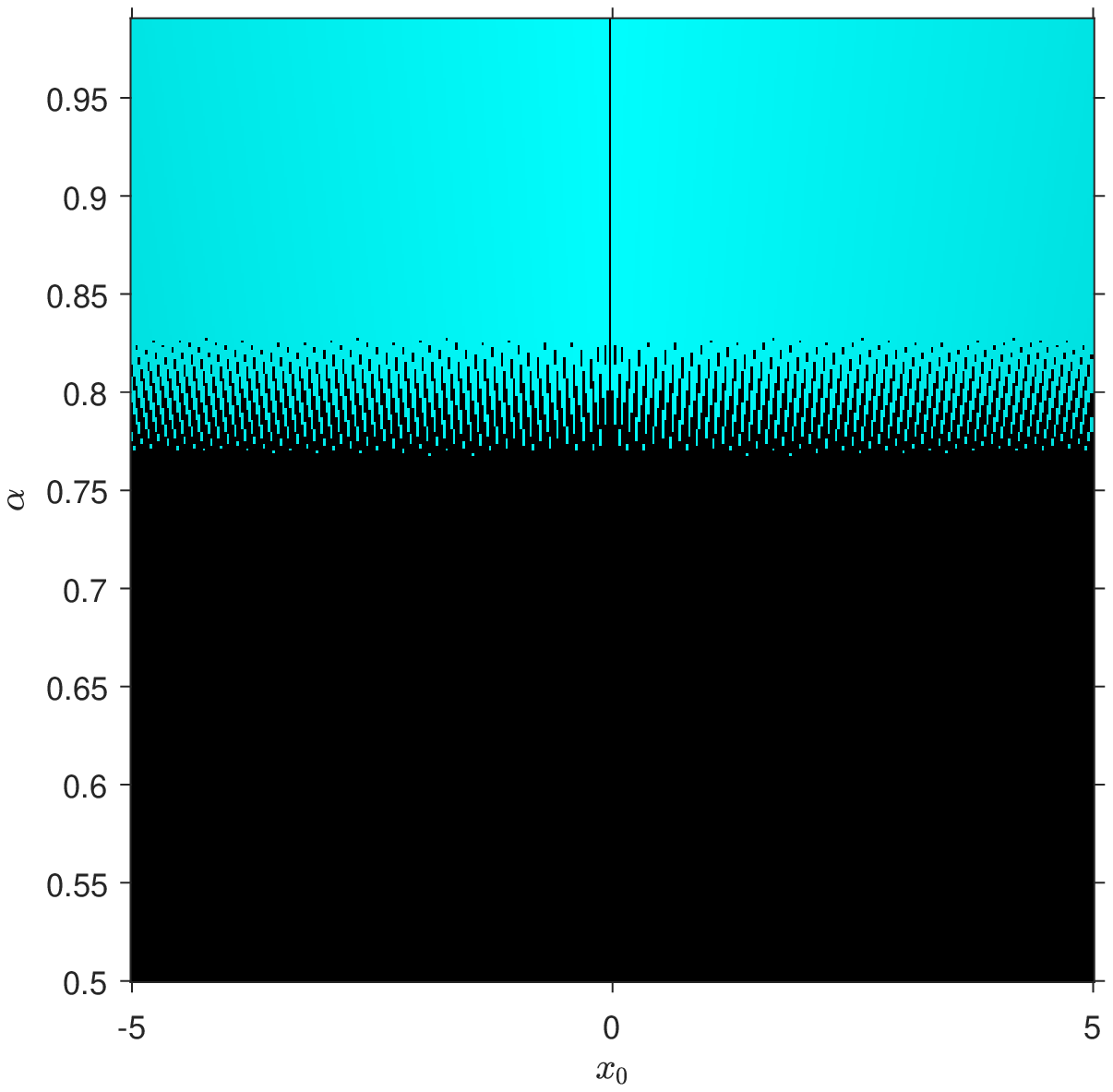}
(b) CFN$_2$, $-5i\leq x_0\leq5i$, 39.85\% convergence
\end{center}
\end{minipage}
\captionof{figure}{Convergence planes of CFN$_1$ and CFN$_2$ on $f_4(x)$ with $x_0$ imaginary}\label{f20}
\begin{minipage}[c]{0.5\textwidth}
\begin{center}
\includegraphics[width=\textwidth]{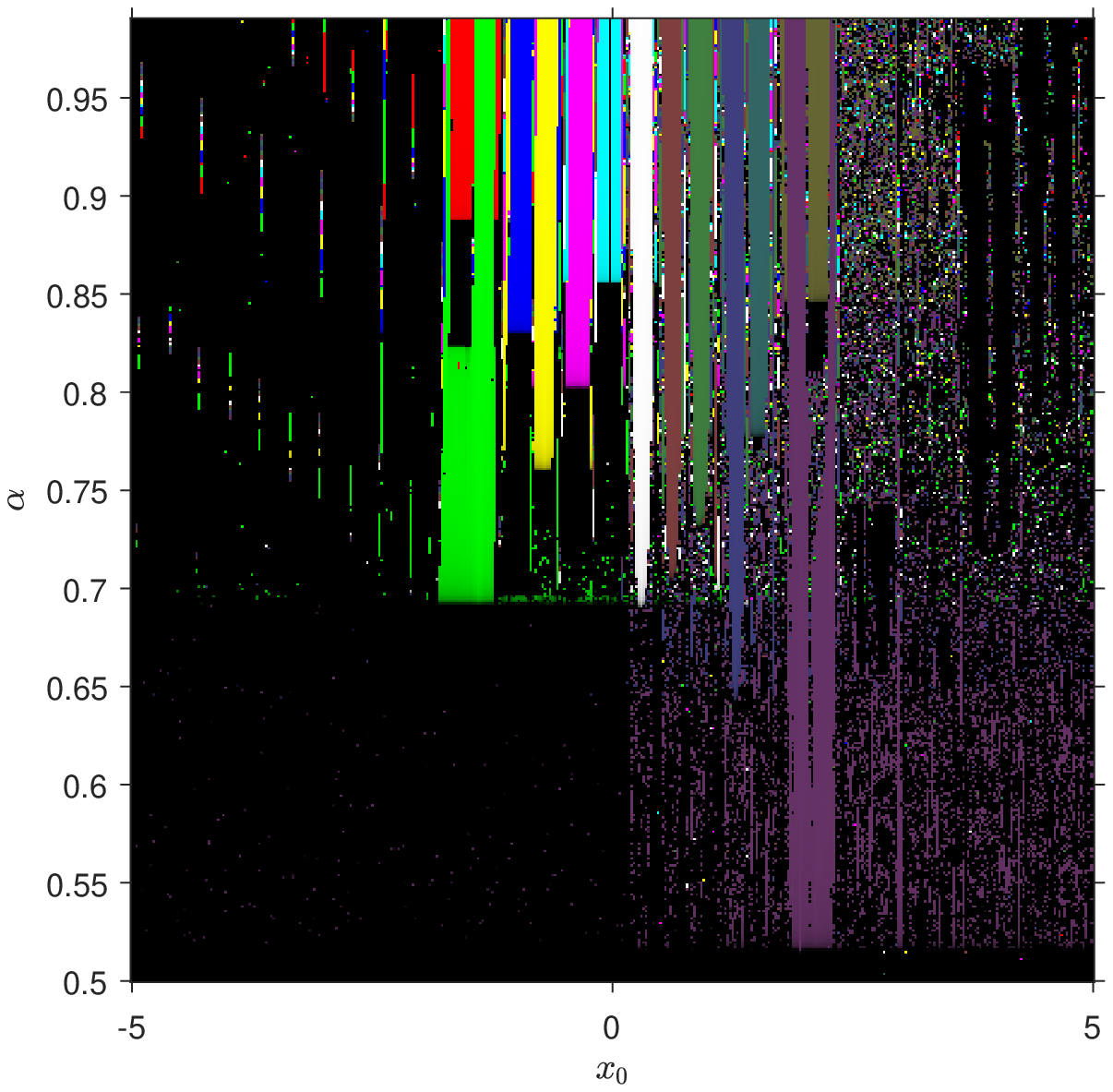}
(a) R-LFN$_1$, $-5\leq x_0\leq5$, 30.06\% convergence
\end{center}
\end{minipage}
\begin{minipage}[c]{0.5\textwidth}
\begin{center}
\includegraphics[width=\textwidth]{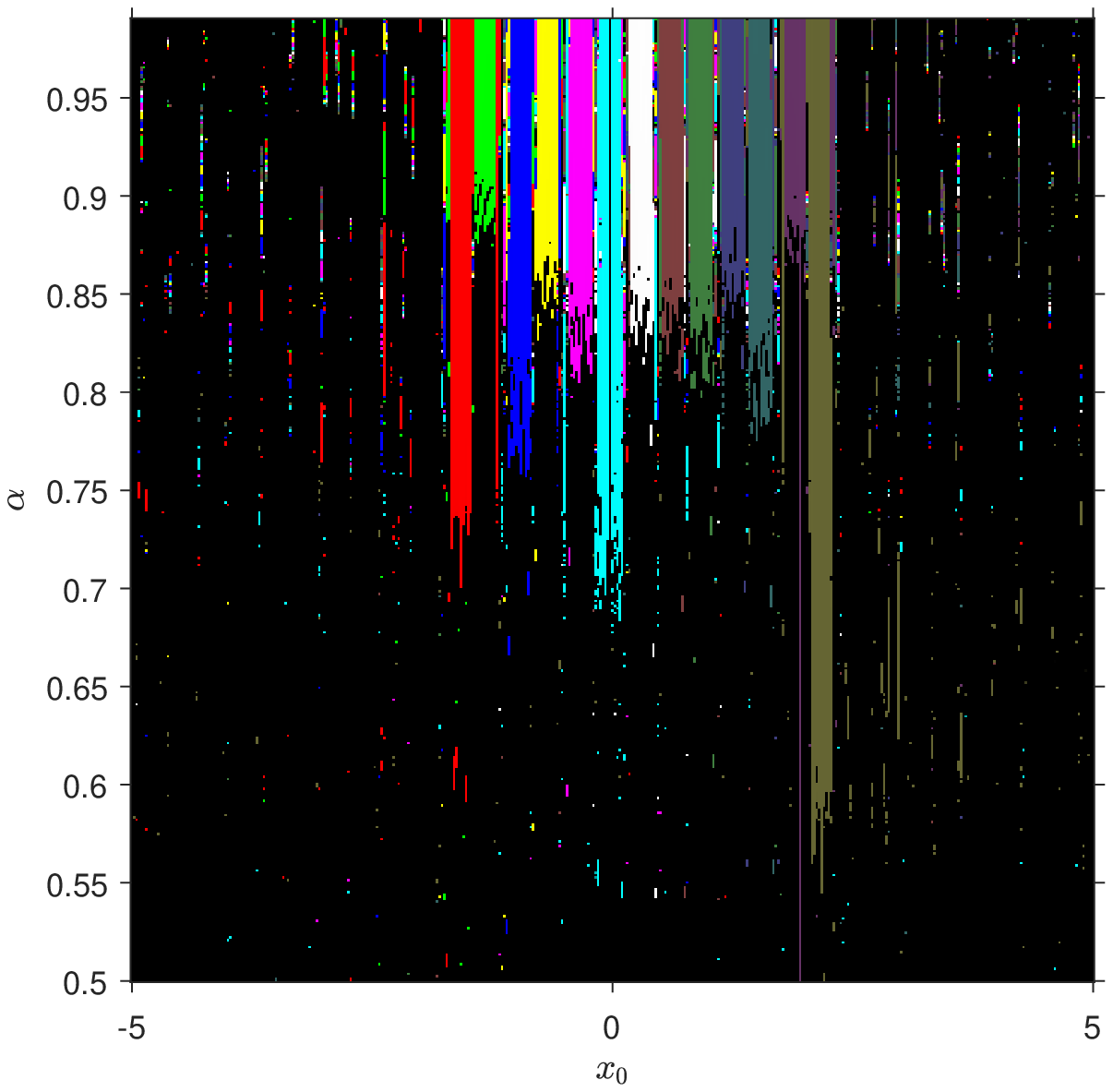}
(b) R-LFN$_2$, $-5\leq x_0\leq5$, 16.88\% convergence
\end{center}
\end{minipage}
\captionof{figure}{Convergence planes of R-LFN$_1$ and R-LFN$_2$ on $f_4(x)$ with $x_0$ real}\label{f21}
\begin{minipage}[c]{0.5\textwidth}
\begin{center}
\includegraphics[width=\textwidth]{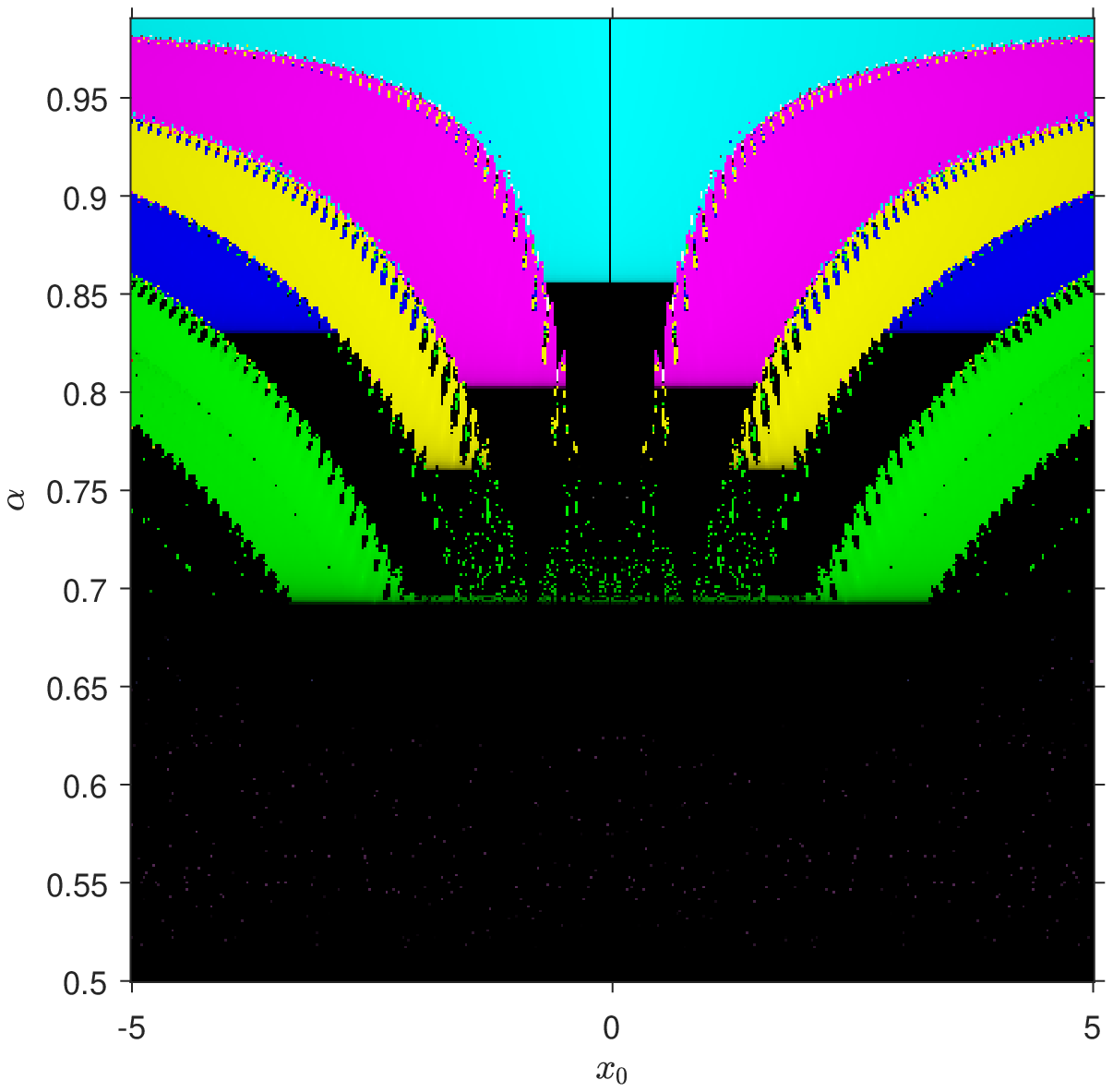}
(a) R-LFN$_1$, $-5i\leq x_0\leq5i$, 44.28\% convergence
\end{center}
\end{minipage}
\begin{minipage}[c]{0.5\textwidth}
\begin{center}
\includegraphics[width=\textwidth]{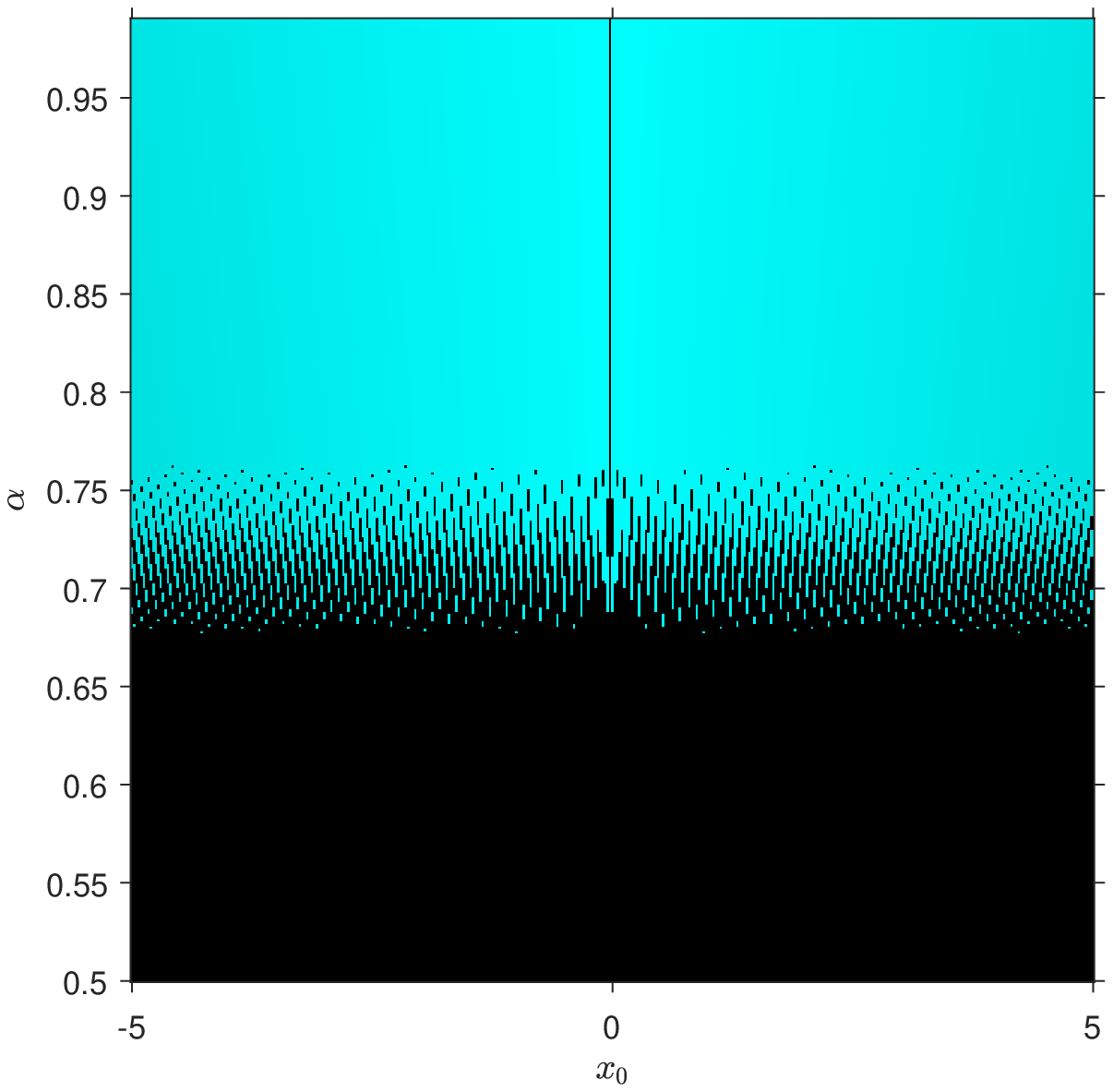}
(b) R-LFN$_2$, $-5i\leq x_0\leq5i$, 55.66\% convergence
\end{center}
\end{minipage}
\captionof{figure}{Convergence planes of R-LFN$_1$ and R-LFN$_2$ on $f_4(x)$ with $x_0$ imaginary}\label{f22}
\begin{minipage}[c]{0.5\textwidth}
\begin{center}
\includegraphics[width=\textwidth]{recursos/c_c_4}
(a) CFN$_2$, $-5\leq x_0\leq5$, 16.46\% convergence
\end{center}
\end{minipage}
\begin{minipage}[c]{0.5\textwidth}
\begin{center}
\includegraphics[width=\textwidth]{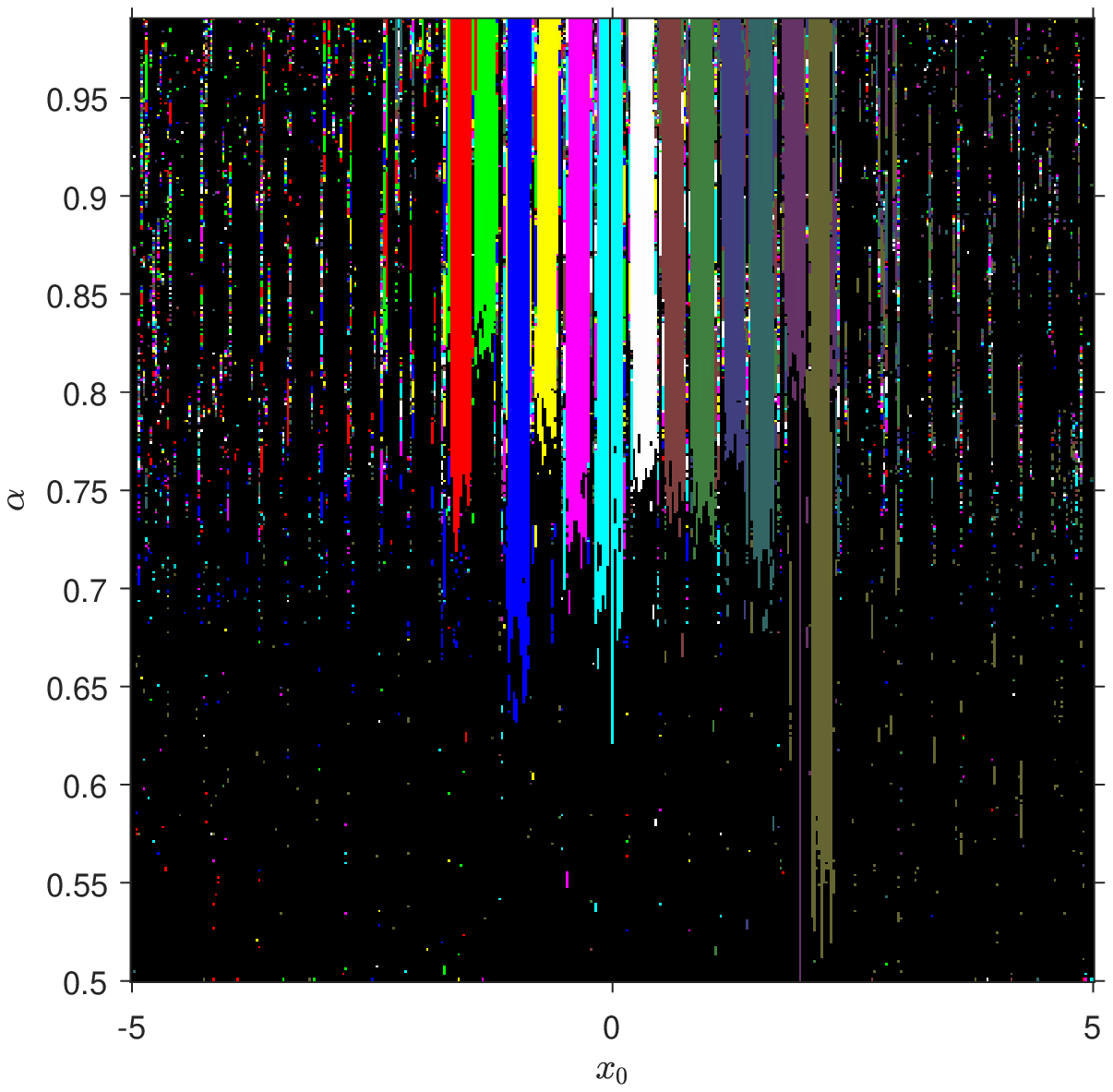}
(b) CFT, $-5\leq x_0\leq5$, 24.29\% convergence
\end{center}
\end{minipage}
\captionof{figure}{Convergence planes of CFN$_2$ and CFT on $f_4(x)$ with $x_0$ real}\label{f23}
\begin{minipage}[c]{0.5\textwidth}
\begin{center}
\includegraphics[width=\textwidth]{recursos/rl_c_4}
(a) R-LFN$_2$, $-5\leq x_0\leq5$, 16.88\% convergence
\end{center}
\end{minipage}
\begin{minipage}[c]{0.5\textwidth}
\begin{center}
\includegraphics[width=\textwidth]{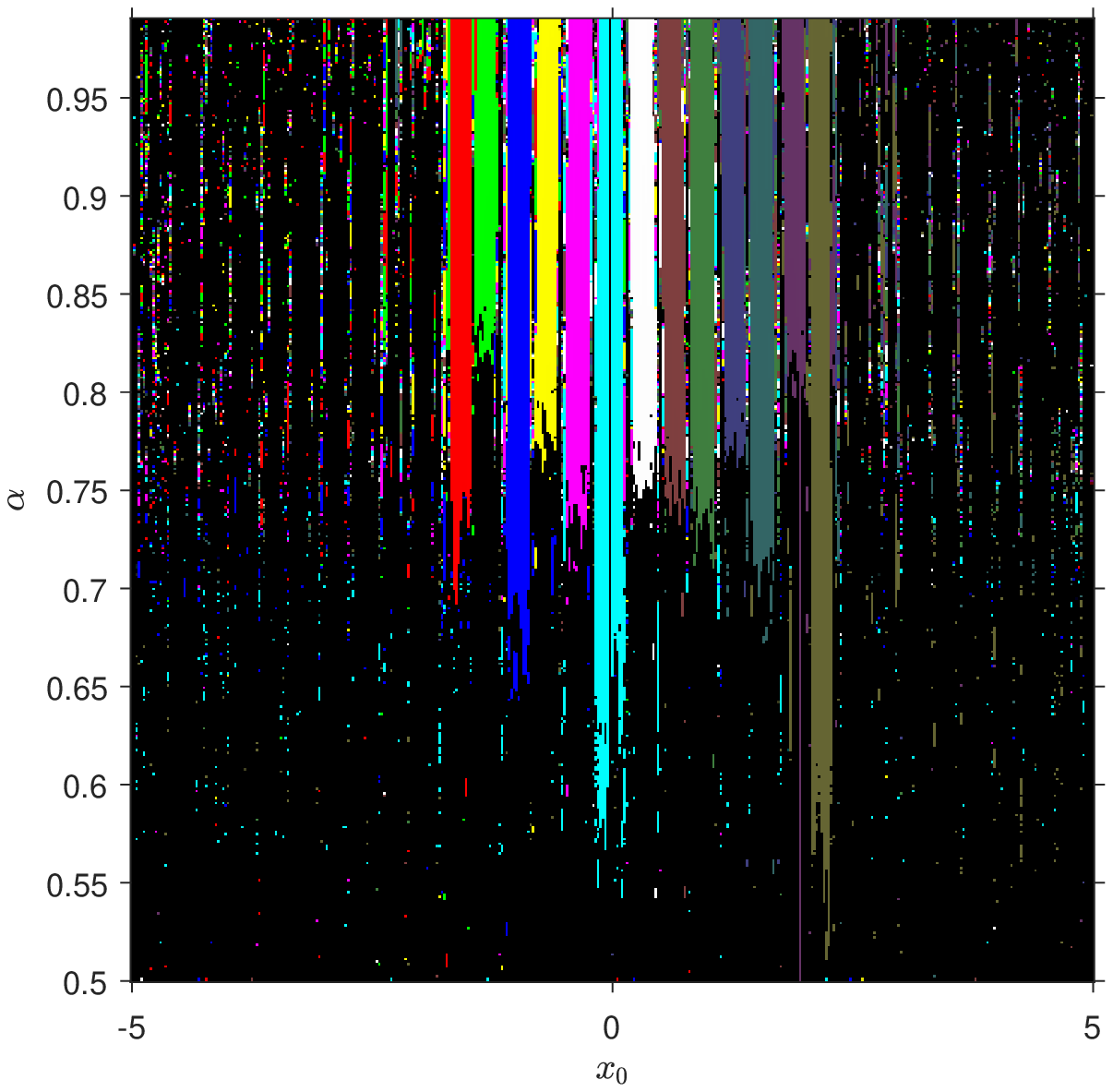}
(b) R-LFT, $-5\leq x_0\leq5$, 25.04\% convergence
\end{center}
\end{minipage}
\captionof{figure}{Convergence planes of R-LFN$_2$ and R-LFT on $f_4(x)$ with $x_0$ real}\label{f24}
\vspace{10pt}
We can see for $f_4(x)$, in general, that CFN$_1$ and R-LFN$_1$ methods have a higher percentage of convergence than CFN$_2$ and R-LFN$_2$ methods respectively. We can also see that Traub methods improve Newton methods.

\section{Concluding Remaks}

Two new fractional Newton methods and two fractional Traub methods have been designed with Caputo and Riemann-Liouville derivatives. These methods do not need a damping parameter to prove the order of convergence. Some tests were made, and the dependence on the initial estimation was analized. In general, the fractional Newton methods proposed in \cite{AJR} has better properties than the fractional Newton methods designed in this paper in terms of wideness of the basins of atractions of the roots, even though the new Newton methods could show better results with large absolute values of the imaginary part of the initial estimations. The Traub methods improve the new Newton methods, not only because require less iterations, but also because have higher percentage of convergence.

\end{document}